\newcommand{\url}[1]{#1}              
\definecolor{gray}{rgb}{0.2,0.2,.2}
\definecolor{colorGreen}{rgb}{0.,0.67,0}
\definecolor{colorRed}{rgb}{0.8,0,0}
\definecolor{colorBlue}{rgb}{0.,0.,0.99}
\definecolor{colorGray}{rgb}{.8,.8,.8}
\newcommand{\e}{\mathrm{e}}
\newcommand{\BMHC}{}
\newcommand{\EMHC}{}
\newcommand{\BMHD}{}
\newcommand{\EMHD}{}
\DeclareMathOperator{\laplace}{\Delta}
\newcommand{\fspace}[1]{{\mathsf{#1}}}
\newcommand{\fspaceL}{\fspace{L}}
\newcommand{\fspaceH}{\fspace{H}}
\newcommand{\fspaceC}{\fspace{C}}
\newcommand{\ol}[1]{{\overline{#1}}}
\newcommand{\Rset}{{\mathbb{R}}}
\newcommand{\Zset}{{\mathbb{Z}}}
\newcommand{\Nset}{{\mathbb{N}}}
\newcommand{\ocinterval}[2]{(#1,\,#2]}%
\newcommand{\oointerval}[2]{(#1,\,#2)}%
\newcommand{\ccinterval}[2]{[#1,\,#2]}%
\newcommand{\odd}{{\rm \,odd}}
\newcommand{\even}{{\rm \,even}}
\newcommand{\loc}{\mathrm{loc}}
\newcommand{\fin}{{\rm fin}}
\newcommand{\ini}{{\rm ini}}
\newlength{\mhpicDwidth}
\newlength{\mhpicDvsep}
\newlength{\mhpicDhsep}
\newlength{\mhpicPwidth}
\newlength{\mhpicPvsep}
\newlength{\mhpicPhsep}
\newlength{\mhpicWhsep}
\newcommand{\pair}[2]{{\left({#1},\,{#2}\right)}}
\newcommand{\npair}[2]{{({#1},\,{#2})}}
\newcommand{\at}[1]{{\left({#1}\right)}}
\newcommand{\nat}[1]{(#1)}
\newcommand{\bat}[1]{{\big(#1\big)}}
\newcommand{\Bat}[1]{{\Big(#1\Big)}}
\newcommand{\ul}[1]{\underline{#1}}
\newcommand{\D}{\displaystyle}
\newcommand{\jump}[1]{{|\![#1]\!|}}
\newcommand{\norm}[1]{\|{#1}\|}
\newcommand{\abs}[1]{\left|{#1}\right|}
\newcommand{\babs}[1]{\big|{#1}\big|}
\newcommand{\dint}[1]{\,\mathrm{d}#1}
\newcommand{\Ga}{{\Gamma}}
\newcommand{\Om}{{\Omega}}
\newcommand{\al}{{\alpha}}
\newcommand{\be}{{\beta}}
\newcommand{\ga}{{\gamma}}
\newcommand{\eps}{{\varepsilon}}
\newcommand{\ka}{{\kappa}}
\newcommand{\la}{{\lambda}}
\newcommand{\calD}{\mathcal{D}}
\newcommand{\calE}{\mathcal{E}}
\newcommand{\calT}{\mathcal{T}}
\newcommand{\bbN}{\mathbb{N}}
\newcommand{\bbR}{\mathbb{R}}
\newcommand{\bbZ}{\mathbb{Z}}
\DeclareMathOperator{\sgn}{sgn}
\newcommand{\set}[2][\empty]{\ensuremath{%
    \left\{%
      \ifx\empty#1%
      \relax%
      \else%
      #1:%
      \fi%
      #2%
    \right\}%
  }%
}
\newcommand{\pdiff}[2][\empty]{\ensuremath{%
        \ifx\empty#1%
        \frac{\partial}{\partial{#2}}%
        \else%
        \frac{\partial{#1}}{\partial{#2}}%
        \fi%
}}
\newcommand{\diff}[2][\empty]{\ensuremath{%
        \ifx\empty#1%
        \frac{\mathrm{d}}{\mathrm{d}{#2}}%
        \else%
        \frac{\mathrm{d}{#1}}{\mathrm{d}{#2}}%
        \fi%
}}
\newcommand*\if@single[3]{%
  \setbox0\hbox{${\mathaccent"0362{#1}}^H$}%
  \setbox2\hbox{${\mathaccent"0362{\kern0pt#1}}^H$}%
  \ifdim\ht0=\ht2 #3\else #2\fi
  }
\newcommand*\rel@kern[1]{\kern#1\dimexpr\macc@kerna}
\newcommand*\widebar[1]{\@ifnextchar^{{\wide@bar{#1}{0}}}{\wide@bar{#1}{1}}}
\newcommand*\wide@bar[2]{\if@single{#1}{\wide@bar@{#1}{#2}{1}}{\wide@bar@{#1}{#2}{2}}}
\newcommand*\wide@bar@[3]{%
  \begingroup
  \def\mathaccent##1##2{%
    \if#32 \let\macc@nucleus\first@char \fi
    \setbox\z@\hbox{$\macc@style{\macc@nucleus}_{}$}%
    \setbox\tw@\hbox{$\macc@style{\macc@nucleus}{}_{}$}%
    \dimen@\wd\tw@
    \advance\dimen@-\wd\z@
    \divide\dimen@ 3
    \@tempdima\wd\tw@
    \advance\@tempdima-\scriptspace
    \divide\@tempdima 10
    \advance\dimen@-\@tempdima
    \ifdim\dimen@>\z@ \dimen@0pt\fi
    \rel@kern{0.6}\kern-\dimen@
    \if#31
      \overline{\rel@kern{-0.6}\kern\dimen@\macc@nucleus\rel@kern{0.4}\kern\dimen@}%
      \advance\dimen@0.4\dimexpr\macc@kerna
      \let\final@kern#2%
      \ifdim\dimen@<\z@ \let\final@kern1\fi
      \if\final@kern1 \kern-\dimen@\fi
    \else
      \overline{\rel@kern{-0.6}\kern\dimen@#1}%
    \fi
  }%
  \macc@depth\@ne
  \let\math@bgroup\@empty \let\math@egroup\macc@set@skewchar
  \mathsurround\z@ \frozen@everymath{\mathgroup\macc@group\relax}%
  \macc@set@skewchar\relax
  \let\mathaccentV\macc@nested@a
  \if#31
    \macc@nested@a\relax111{#1}%
  \else
    \def\gobble@till@marker##1\endmarker{}%
    \futurelet\first@char\gobble@till@marker#1\endmarker
    \ifcat\noexpand\first@char A\else
      \def\first@char{}%
    \fi
    \macc@nested@a\relax111{\first@char}%
  \fi
  \endgroup
}
\newcommand{\deriv}[2][\empty]{\ensuremath{%
    \ifx\empty#1%
    \frac{\mathrm{d}}{\mathrm{d}{#2}}%
    \else%
    \frac{\mathrm{d}{#1}}{\mathrm{d}{#2}}%
    \fi%
  }
}
\newcommand{\tderiv}[2][\empty]{{\textstyle\deriv[#1]{#2}}}
\newcommand{\pderiv}[2][\empty]{\ensuremath{%
    \ifx\empty#1%
    \frac{\partial}{\partial{#2}}%
    \else%
    \frac{\partial{#1}}{\partial{#2}}%
    \fi%
  }
}
\newcommand{\fast}{{\mathrm{fast}}}
\newcommand{\slow}{{\mathrm{slow}}}
\newcommand{\ess}{{\mathrm{ess}}}
\renewcommand{\neg}{{\mathrm{neg}}}
\newcommand{\reg}{{\mathrm{reg}}}
\newcommand{\res}{{\mathrm{res}}}
\newcommand{\sm}{\setminus}
\renewcommand{\dint}[1]{\mathrm{d}#1}
\theoremstyle{plain}
\newtheorem{theorem}             {Theorem}[section]
\newtheorem{corollary}  [theorem]{Corollary}
\newtheorem{lemma}      [theorem]{Lemma}
\newtheorem{proposition}[theorem]{Proposition}
\newtheorem{definition} [theorem]{Definition}
\newtheorem{assumption} [theorem]{Assumption}
\newtheorem{observation}[theorem]{Observation}
\newtheorem{result}[theorem]{Main result}
\theoremstyle{definition}
\newtheorem{notation}  [theorem]{Notation}
\theoremstyle{remark}
\numberwithin{figure}{section}
\numberwithin{table}{section}
\numberwithin{equation}{section}
\begin{document}


\title{Hysteresis and phase transitions in a lattice regularization \\
  of an ill-posed forward-backward diffusion equation}

\date{\today}

\author{
  Michael Helmers\footnote{
    Rheinische Friedrichs-Wilhelm-Universit\"at Bonn,
    {\tt{helmers@iam.uni-bonn.de}}.}
  \and
  Michael Herrmann\footnote{
    Westf\"alische Wilhelms-Universit\"at M\"unster,
    {\tt{michael.herrmann@uni-muenster.de}}.}
}

\maketitle


\begin{abstract}
We consider a lattice regularization for an ill-posed diffusion equation with trilinear constitutive law and study the dynamics of phase interfaces in the parabolic scaling limit. Our main result guarantees for a certain class of single-interface initial data that
the lattice solutions satisfy asymptotically a free boundary problem with hysteretic Stefan condition. The key challenge in the proof is to control the microscopic fluctuations that are inevitably produced by the backward diffusion when a particle passes the spinodal region.
\end{abstract}


\small

\noindent
\begin{minipage}[t]{0.15\textwidth}%
  Keywords: 
\end{minipage}%
\begin{minipage}[t]{0.8\textwidth}%
\emph{%
	multi-scale analysis for gradient flows, %
	regularization of ill-posed diffusion equations    
	\\ %
	hysteresis and phase transitions, %
	interface propagation in discrete media %
} %
\end{minipage}%
\medskip
\newline\noindent
\begin{minipage}[t]{0.15\textwidth}%
  MSC (2010): %
\end{minipage}%
 \begin{minipage}[t]{0.8\textwidth}%
   34A33, 
   35R25, 
   37L60, 
   74N20, 
   74N30  
 \end{minipage}%

\normalsize


\setcounter{tocdepth}{3}
\setcounter{secnumdepth}{3}{\scriptsize\tableofcontents}

\section{Introduction}
\label{sect:intro}

Forward-backward diffusion problems arise in many branches of physics
and materials science \cite{Elliott85,BaBeDaUg93}, mathematical
biology \cite{Padron04,HoPaOt04}, and technology \cite{PeMa90} and
lead to complex and intriguing mathematical problems. The simplest
dynamical model for a one-dimensional continuous medium would be the
nonlinear parabolic PDE
\begin{align}
  \label{Eqn:PDEIllPosed}
  \partial_\tau U =\partial_\xi^2 P,
  \qquad
  P:=\Phi^\prime\at{U}
\end{align}
with time $\tau\geq0$, space $\xi\in\Rset$, and non-monotone
$\Phi^\prime$, but the corresponding Cauchy problem is ill-posed.
To overcome this difficulty, a well-known approach is to consider microscopic
regularizations with length parameter $0<\eps\ll1$ that take into
account \BMHC small-scale \EMHC effects and complement \eqref{Eqn:PDEIllPosed} by
additional terms and dynamical laws. The latter depend on
  the particular choice of $\Phi'$ and in what follows we focus on
a typical setting in materials science, where $\Phi^\prime$ is the
\BMHC bistable derivative of a double-well potential $\Phi$. We also assume that 
$\Phi^\prime$ and $\Phi$ are odd and even, respectively, and mention that a bistable function is sometimes called cubic-type as its graph consists of two increasing branches which are separated by an decreasing one.\EMHC

In the literature, a lot of attention has been paid to the
Cahn-Hilliard equation
\begin{align}
  \label{Eqn:PDECahnHilliard}
  \partial_\tau U
  =
  \partial_\xi^2 P- \eps^2\partial_\xi^4 U
\end{align}
and the so-called viscous approximation
\begin{align}
  \label{Eqn:ViscousApp}
  \partial_\tau U
  =
  \partial_\xi^2 P + \eps^2\partial_\tau\partial_\xi^2 U,
\end{align}
but in this paper we study the spatially discrete regularization
\begin{equation}
 \label{eq:master-eq}
  \dot u_j(t)
  =
  \laplace p_j(t),
  \qquad
  p_j
  =
  \Phi'\bat{u_j(t)}
\end{equation}
with microscopic time $t\geq0$, particle index $j\in\Zset$, and
standard Laplacian $\Delta$ on $\Zset$, that is
\BMHD
\begin{align}
\label{Def:Laplacian}
\Delta v_j = v_{j+1}+v_{j-1}-2v_j.
\end{align}
\EMHD
This lattice ODE is linked to the
PDE \eqref{Eqn:PDEIllPosed} by the parabolic scaling
\begin{align}
  \label{Eqn:Scaling}
  \tau := \eps^2 t,
  \qquad
  \xi := \eps j
\end{align}
and the formal identification 
\begin{align}
  \label{Eqn:Identification}
  u_j\at{t}\cong U\pair{\eps^2 t}{\eps j},
  \qquad
  p_j\at{t}\cong P\pair{\eps^2 t}{\eps j},
\end{align}
whereby we can regard \eqref{eq:master-eq} as a spatial
semi-discretization of \eqref{Eqn:PDEIllPosed} or, conversely, the PDE
\eqref{Eqn:PDEIllPosed} as the naive continuum limit of the lattice
\eqref{eq:master-eq}.

Of particular interest in the analysis of any regularization 
is the sharp-interface limit $\eps\to0$ since it gives rise to
phase interfaces, that is, curves $\xi=\Xi\at{\tau}$ which separate
space-time regions in which $U$ is confined to either one of the
convex components of $\Phi$ (usually called \emph{phases}). The
dynamics of such interface curves \BMHD have \EMHD to be determined by a free
boundary problem that couples the -- now locally well-posed -- bulk
diffusion \eqref{Eqn:PDEIllPosed} for $U$ on either side of the
interface with certain conditions for $\Xi$. The \emph{Stefan
  condition}
\begin{align}
  \label{Eqn:StefanCondition}
  \tfrac{\dint \Xi}{\dint\tau}\,\jump{U}+
  \jump{\partial_\xi P}=0,\qquad 
  \jump{P}=0,
\end{align}
where $\jump{\cdot}$ denotes the jump across the interface, guarantees
for all models that \eqref{Eqn:PDEIllPosed} holds in a distributional
sense across the interface but the evolution of $\pair{U}{\Xi}$
depends on another interface condition which encodes the details of
the microscopic regularization. For the Cahn-Hilliard equation
\eqref{Eqn:PDECahnHilliard}, the additional law reads
\begin{align}
  \label{Eqn:MaxRule}
  P = 0
\end{align} 
and fixes the value of $P$ according to Maxwell's local equilibrium
criterion. The validity of the free boundary problem
\eqref{Eqn:PDEIllPosed}, \eqref{Eqn:StefanCondition} and
\eqref{Eqn:MaxRule} has been proven rigorously in \cite{BeBeMaNo12}.

Heuristic arguments indicate that the sharp-interface limit of the
viscous approximation is more involved since the interface value of
$P$ is no longer known as $\eps\to0$ but depends in a hysteretic
manner on both the state of the system and the propagation direction
of the interface. More precisely, numerical experiments and formal
asymptotic analysis as carried out in \cite{Plotnikov94,EvPo04} predict that the
viscous approximation supports both standing and moving interfaces
according to the flow rule
\begin{align}
  \label{Eqn:FlowRule}
  P=-p_*
  \text{ for }
  \tfrac{\dint \Xi}{\dint\tau}\jump{U}>0,
  \quad 
  P=+p_*
  \text{ for }
  \tfrac{\dint \Xi}{\dint\tau}\jump{U}<0,
  \quad
  \tfrac{\dint \Xi}{\dint\tau}=0
  \text{ for }
  P\in[-p_*,+p_*],
\end{align}
where $\pm p_*$ are the two local extrema of the odd function
$\Phi^\prime$. The key argument in this derivation is that
any reasonable limit for $\eps\to0$ satisfies the entropy
inequality
\begin{align}
  \label{Eqn:EntropyLaw}
  \partial_\tau \eta\at{U}-\partial_\xi\bat{\mu\at{P}\partial_\xi {P}}
  \leq
  0,
\end{align}
where the entropy flux $\eta$ and the entropy density $\mu$ can be
chosen arbitrarily as long as they comply with
\begin{align}
\label{Eqn:EntropyPair}
  \eta^\prime
  =
  \mu\circ \Phi^\prime,\qquad \mu^\prime\geq0.
\end{align}
The main tasks for a rigorous justification of the hysteretic flow
rule \eqref{Eqn:FlowRule} or, equivalently, of \eqref{Eqn:EntropyLaw}
is to show the existence of a smooth interface curve $\Xi$ and to
derive $\eps$-uniform a priori estimates that guarantee the strong
convergence of the fields as well as the regularity of the limit
$P$. Although there is an extensive literature on the viscous approximation, \BMHC see the discussion below, we are not aware of any rigorous result
that links the hysteretic free boundary problem to the sharp interface
limit of \eqref{Eqn:ViscousApp}.\EMHC

\begin{figure}
  \centering
  \includegraphics[width=.91\textwidth]{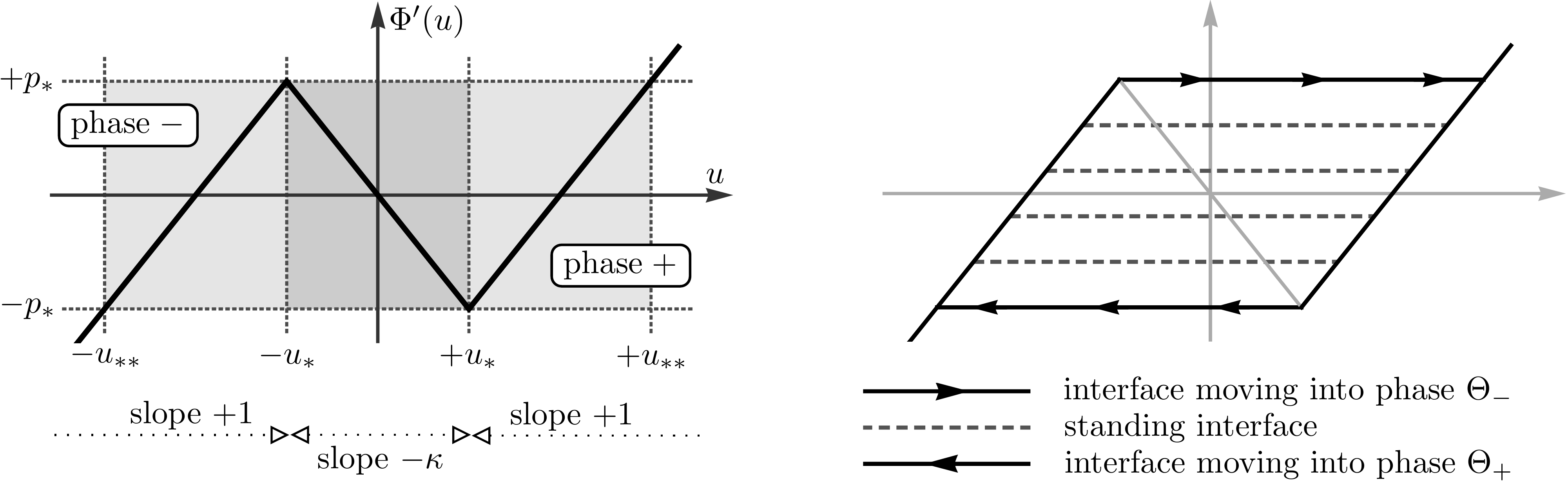}
  \caption{\emph{Left Panel}. Graph of the piecewise linear function
    $\Phi^\prime$ as defined in \eqref{eq:phi_prime}. The gray boxes
    represent the intervals $I_*$ and $I_{**}$ from
    \eqref{Eqn.Intervals} and the corresponding double well potential
    is given in \eqref{Eqn:DoubleWell}.  \emph{Right panel}. Cartoons
    of the hysteresis loop for macroscopic phase interfaces. Notice
    that the interface moves from the phase $\Theta_+$ into the phase
    $\Theta_-$ if and only the particles at the interface transit the
    other way round from $\Theta_-$ to $\Theta_+$ and that
    $\jump{P}=0$ implies $\jump{U}=\pm2$.}
  \label{fig:potential}  
\end{figure}

For the lattice ODE \eqref{eq:master-eq}, which can also be written as
$\dot{w}_j=\nabla_-\Phi^\prime\at{\nabla_+w_j}$ with
$u_j=\nabla_+ w_j=w_{j+1}-w_j$, one can easily adapt the asymptotic
arguments from \cite{Plotnikov94,EvPo04} to show heuristically that
the limit dynamics are governed by the same hysteretic free boundary
problem as for the viscous approximation. Moreover, this
micro-to-macro transition has been made rigorous in two cases: $(i)$
in \cite{GeNo11,BeGeNo13} for generic bistable $\Phi'$ and initial
data that give rise to standing interfaces only, and $\at{ii}$ by the
authors in \cite{HeHe13} for bilinear $\Phi'$ and a suitable class of
well-prepared initial data.  The latter is to our knowledge the only
available rigorous microscopic justification for macroscopic phase
interfaces that are driven by hysteric jump conditions. We also refer
to \cite{EsSl08, EsGr09} for coarsening in discrete forward-backward
diffusion lattices with monostable $\Phi^\prime$ and to
\cite{GuShTi13,GuTi16} for other systems with spatially distributed
hysteresis.

In the current paper, we extend the rigorous analysis from
\cite{HeHe13} to the case of trilinear $\Phi'$. At first glance, the
step from bilinear to trilinear seems to be a minor improvement only
but the mathematical analysis of the trilinear case is significantly
more involved because the spinodal region is no longer degenerate. In
particular, microscopic phase transitions are no longer instantaneous
processes related to temporal jumps but take a certain time as the
particles have to move through the spinodal region. The novel
challenge is that the backward diffusion during each spinodal visit
produces strong microscopic fluctuations which have to be controlled
on the macroscopic scale. The main achievement of the present paper
consists, roughly speaking, in the derivation of asymptotic formulas
and estimates for the creation and subsequent amplitude decay of the
fluctuations which finally ensure that the lattice data converge as
$\eps\to0$ to regular macroscopic fields. Moreover, some of the
arguments derived below can be generalized to genuinely nonlinear
bistable functions $\Phi^\prime$.

In what follows we always suppose -- see Figure \ref{fig:potential}
for an illustration -- that the lattice ODE \eqref{eq:master-eq} is
complemented by
\begin{equation}
  \label{eq:phi_prime}
  \Phi'(u)
  :=
  \begin{cases}
    u+1        &\text{if } u \leq - u_*, \\
    u-1        &\text{if } u \geq +u_*, \\
    - \kappa u &\text{if } -u_* < u < +u_*,
  \end{cases}\end{equation}
where $\ka\in(0,\infty)$ is a free slope-parameter and
\begin{align}
\label{Eqn:Parameters}
  \pm p_*=\Phi^\prime\at{\mp u_*}=\Phi^\prime\at{\pm u_{**}},
  \qquad 
  u_*:=\frac{1}{1+\ka},
  \qquad
  p_*:=\frac{\ka}{1+\ka},
  \qquad 
  u_{**}:=\frac{1+2\ka}{1+\ka}
\end{align}
In particular, the bilinear case $\Phi^\prime\at{u}=u-\sgn\at{u}$
corresponds to $\ka=\infty$ while for $\ka\to0$ there is \BMHD no \EMHD backward
diffusion anymore and the PDE \eqref{Eqn:PDEIllPosed} becomes
degenerate-parabolic.

\BMHC
Before we discuss the dynamical properties of the lattice ODE \eqref{eq:master-eq}, we give a brief and non-exhaustive overview of the literature concerning the viscous approximation \eqref{Eqn:ViscousApp}, which can also be formulated as $\partial_\tau W = \partial_\xi\Phi^\prime\at{\partial_\xi W}+\eps\partial_\tau\partial_\xi^2W$, where $U=\partial_\xi W$. Moreover, 
some authors refer to interfaces as phase boundaries, and a standing interface is
 often called steady.
\par
The initial value problem for  \eqref{Eqn:ViscousApp} has been studied in \cite{Padron04,NoCoPe91}, and \cite{BoCoTo17} provides existence and uniqueness results for a broader class of regularizing PDEs. Numerical schemes
are proposed and analyzed in \cite{EvPo04,Pierre10,LaMa12} -- see also the discussion at the end of \S\ref{sect:intro} -- and \cite{NoCoPe91} investigates the multitude of steady states and their dynamical stability with respect to \eqref{Eqn:ViscousApp}. \BMHD Moreover, 
\cite{Plotnikov94,EvPo04} characterize the limit $\eps\to0$ in the framework of Young measures and entropy inequalities but we already mentioned that the rigorous justification of the limit model has not yet been achieved. \BMHC
\par
\BMHC The existence and uniqueness of two-phase entropy solutions to the limiting problem \eqref{Eqn:PDEIllPosed}, \eqref{Eqn:StefanCondition}, and \eqref{Eqn:EntropyLaw} have been proven in \cite{MaTeTe09} for a trilinear nonlinearity as in \eqref{eq:phi_prime}, and \cite{Visintin06} studies the existence and uniqueness problem for an equivalent formulation in terms of a parabolic PDE that comprises a spatial family of temporal hysteresis operators. \cite{GiTe10, LaMa12} discuss the special case of Riemann initial data and provide explicit formulas for the corresponding self-similar solutions \BMHD with moving or standing interface. \BMHC Notice also that the ill-posed forward-backward equation \eqref{Eqn:PDEIllPosed} admits in general -- i.e., without entropy conditions and two-phase assumption -- a plethora of solutions, see \cite{Hoe83,Zh06} as well as \cite{Terracina14,Terracina15} for recent results and a discussion of the literature concerning solutions that penetrate the spinodal region. Measure-valued solutions to \eqref{Eqn:PDEIllPosed} have also been studied, see \cite{Plotnikov94,YiWa03,EvPo04,SmTe10,SmTe11,BeSmTe16} and the references therein.
\EMHC

\subsection{Overview of the key effects}

\begin{figure}
  \centering
  \includegraphics[width=.9\textwidth]{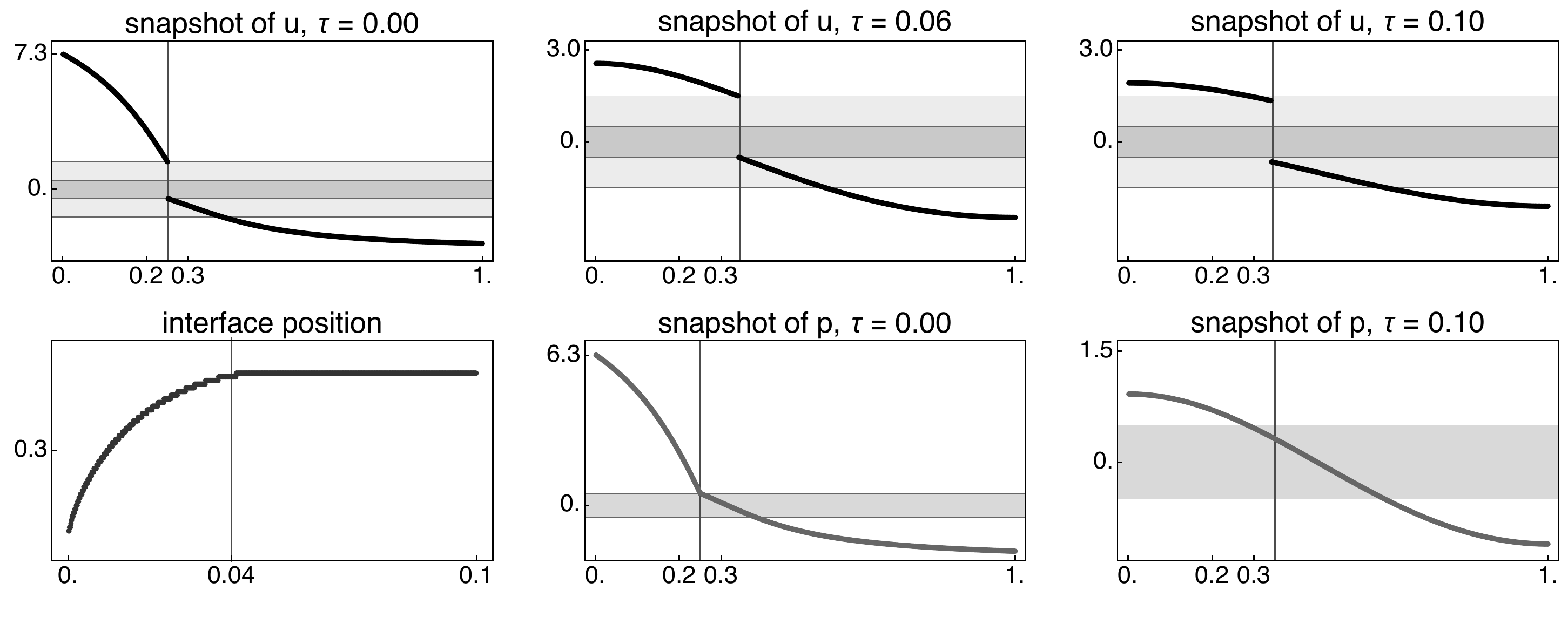}
  \caption{Numerical example with single-interface data, computed with
    Neumann boundary conditions and $\ka=1$, $N=500$. \emph{Top
      row}. Snapshots of $u$ against the scaled particle index
    $\xi=\eps j\in[0,1]$, where the gray areas represent the intervals
    $I_*$ and $I_{**}$ from \eqref{Eqn.Intervals} as depicted in
    Figure \ref{fig:potential}.  \emph{Bottom row}. Evolution of the
    interface position $\Xi$ as function of $\tau$ and snapshots of
    $p$ against $\xi$ with shaded area now indicating the interval
    $J_*$. \emph{Interpretation}. In the macroscopic limit $\eps\to0$,
    a single phase interface propagates initially to the right but
    gets finally pinned at $\tau\approx0.04$. Moreover, the scaled
    lattice data $p$ approximate a macroscopic function $P$ which is
    continuous everywhere and piecewise differentiable. On the
    microscopic scale, however, we find strong \BMHD and \EMHD localized fluctuations
    as illustrated in Figures \ref{Fig:Snapshots} and
    \ref{Fig:Trajectories}.}
  \label{Fig:Front_2}
  \bigskip
  \centering
  \includegraphics[width=.9\textwidth]{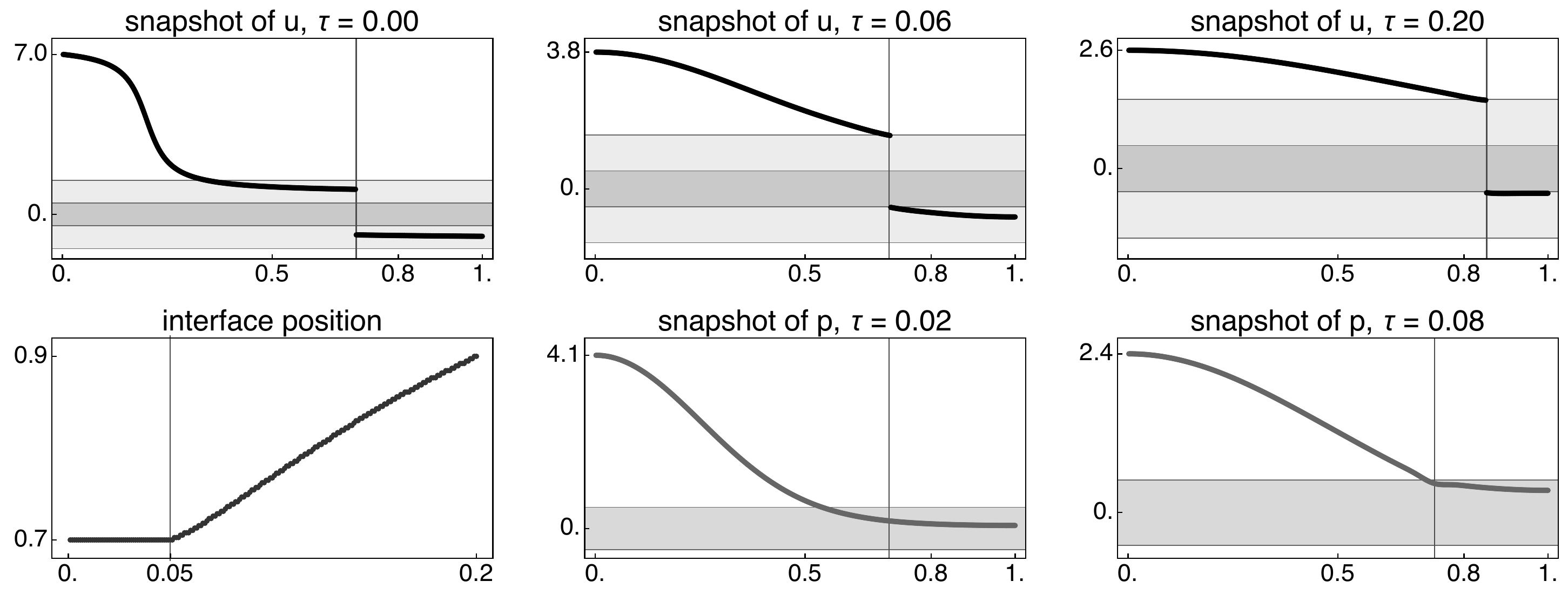}
  \caption{Second numerical example with depinning of the macroscopic
    interface at $\tau\approx0.05$.  On the moving interface, $P$
    attains the value $+p_*$ and $\partial_\xi P$ exhibits \BMHD a jump, but \EMHC
    when the interface rests, $P$ is smooth across the
    interface with non-fixed value in $J_*$. This dichotomy gives rise
    to the hysteresis diagram in the right panel of Figure
    \ref{fig:potential} and complies with both the Stefan condition
    \eqref{Eqn:StefanCondition} and the flow rule
    \eqref{Eqn:FlowRule}.}
  \label{Fig:Front_1}
\end{figure}

The nonlinear lattice \BMHD \eqref{eq:master-eq}, \eqref{eq:phi_prime} \EMHD
exhibits a complex dynamical behavior since the non-monotonicity of
$\Phi^\prime$ implies that each particle $u_j$ can either diffuse
forwards with regular coefficient
$\Phi^{\prime\prime}\at{u_j\at{t}} > 0$ or backwards with
$\Phi^{\prime\prime}\at{u_j\at{t}}<0$. In order to illustrate the
different phenomena we next discuss some numerical simulations of
finite lattices $j=1,\ldots, N$ with natural scaling parameter
$\eps:=1/N$ and homogeneous Neumann conditions, \BMHC see  \S\ref{sect:numerics} for more details. In particular, we \EMHC regard the lattice data for large
$N$ as discrete sampling of macroscopic fields by scaling time and
space but not amplitude according to \eqref{Eqn:Identification}, and
rely on the following conventions and abbreviations for the
interpretation of the numerical results.

\begin{notation}[Phases and intervals]
  We refer to the different connected components of the set
  $\{u : \Phi^{\prime\prime}\at{u}>0\}$ as \emph{phases} and write
  \begin{align*}
    \Theta_-
    :=
    (-\infty,-u_*) \quad \text{for the $-$-phase}\,,
    \qquad\qquad
    \Theta_+
    :=
    (+u_*,\infty) \quad\text{for the $+$-phase}\,,
\end{align*}
while $\Theta_0:=(-u_*,+u_*) = \{u : \Phi^{\prime\prime}\at{u}<0\}$ is
called the \emph{spinodal region}. For the analysis of the macroscopic
dynamics it is also convenient to introduce the intervals
\begin{align}
  \label{Eqn.Intervals}
  I_* := [-u_*,+u_*],
  \qquad
  I_{**}=[-u_{**},+u_{**}],
  \qquad
  J_*:=[-p_*,+p_*],
\end{align}
where $I_*$ and $J_*$ are the closures of $\Theta_0$ and
$\Phi^\prime\at{\Theta_0}$, respectively, and $I_{**}$ denotes the
inverse image of $J_*$ under $\Phi^\prime$.
\end{notation}

Numerical simulation as depicted in Figures \ref{Fig:Front_2} and
\ref{Fig:Front_1} provide -- for well prepared single-interface
initial data as defined in Assumption \ref{ass:macro} -- evidence for
the existence and dynamical stability of a macroscopic phase interface
that separates two space-time regions in which the lattice data are
confined to either one of the phases $\Theta_-$ and $\Theta_+$. The
key observations concerning the corresponding large scale dynamics can
be summarized as follows.

\begin{observation}[Hysteretic flow rule on the macroscopic scale]
  \label{Obs:Macro}
  The macroscopic phase interface located at the curve
  $\xi=\Xi\at\tau$ can either propagate or be at rest according to the
  following rules:
  \begin{enumerate}
  \item \ul{\emph{Standing interfaces}}: At any time $\tau$ with
    $\tfrac{\dint}{\dint\tau}\Xi\at\tau=0$ we have
    $P\pair{\tau}{\Xi\at\tau}\in J_*$ and $P$ is smooth across the
    interface.
  \item \ul{\emph{Moving interfaces}}:
    $\tfrac{\dint}{\dint\tau}\Xi\at\tau\neq0$ implies
    $P\pair{\tau}{\Xi\at\tau}=+p_*$ or $P\pair{\tau}{\Xi\at\tau}=-p_*$
    depending on whether the interface propagates into the phase
    $\Theta_-$ or $\Theta_+$, respectively.  The field $P$ is still
    continuous across the interface but $\partial_\xi P$ admits a jump
    that drives the interface.
  \end{enumerate}
  Moreover, continuity of $P$ implies discontinuity for $U$ and the
  type of each interface can change in time by \emph{pinning} or
  \emph{depinning}.
\end{observation}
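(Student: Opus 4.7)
My plan is to establish the observation as a rigorous consequence of an $\eps$-uniform microscopic analysis coupled with a careful passage to the limit $\eps\to0$. The starting point is a pair of $\eps$-independent a priori bounds derived from the gradient-flow structure of \eqref{eq:master-eq}: an energy estimate (controlling $\eps\sum_j\Phi(u_j)$ via the dissipation of $\eps\sum_j\abs{\nabla_+ p_j}^2$) and a set of discrete entropy inequalities obtained by testing \eqref{eq:master-eq} against $\mu(p_j)$ for admissible pairs $(\eta,\mu)$ as in \eqref{Eqn:EntropyPair}. Together with a maximum principle showing that $p_j(t)$ remains bounded in terms of the initial data, these estimates provide weak compactness of $p^{(\eps)}$ in a parabolic Sobolev norm and BV-type bounds for $u^{(\eps)}$ that are compatible with the expected jump across the interface.

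Next, I would exploit the assumed single-interface structure of the well-prepared initial data to define a discrete interface index $J^{(\eps)}(t)$ separating a block of particles in $\Theta_-$ from a block in $\Theta_+$, and then prove that the rescaled curve $\Xi^{(\eps)}(\tau):=\eps J^{(\eps)}(\eps^{-2}\tau)$ is pre-compact in $C^{0,\alpha}$ via a quantitative control on how long a single particle needs to traverse the spinodal region $\Theta_0$. Here the trilinear form of $\Phi'$ is crucial: during a spinodal visit the local dynamics linearize to $\dot u_j=-\ka\Delta u_j+\text{coupling}$, so the transit time scales like $\eps^2\log(1/\eps)$ and the amplitude of the resulting microscopic oscillations can be tracked by an explicit spectral computation on the backward-diffusion operator. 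The quantitative decay of these oscillations once the particle re-enters a convex phase, measured in a weighted $\ell^2$-norm, is what upgrades weak to strong convergence of $p^{(\eps)}$ and yields continuity of the limit $P$ across $\Xi$.

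Once strong convergence is in hand, I would identify the limit as a distributional solution of \eqref{Eqn:PDEIllPosed}, \eqref{Eqn:StefanCondition} on the bulk phases, with the flow rule determined by passing to the limit in the entropy inequalities and using the two-phase constraint $U\in\Theta_-\cup\Theta_+$ a.e.\ away from $\Xi$. For standing interfaces, the absence of spinodal crossings near $\Xi$ makes both sides of $p^{(\eps)}$ smoothly matched by the regular forward diffusion, giving smoothness of $P$ across $\Xi$ and, since the boundary particles lie in $I_{**}$, the constraint $P(\tau,\Xi(\tau))\in J_*$. For moving interfaces, the entropy inequality rules out all values of $P$ at $\Xi$ except the extremal ones $\pm p_*$, with the sign forced by the direction of propagation through the standard Oleinik-type argument, while the jump in $\partial_\xi P$ is recovered from \eqref{Eqn:StefanCondition} together with the relation $\jump{U}=\pm 2$ at $\jump{P}=0$ visible in Figure \ref{fig:potential}.

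The main obstacle, as flagged in the introduction, is the third step: controlling the microscopic fluctuations produced during each spinodal passage uniformly in $\eps$ and transferring this control to the macroscopic scale. In the bilinear case of \cite{HeHe13} the spinodal region is degenerate and transits are instantaneous, so no such analysis is needed; here one must derive sharp asymptotic formulas for both the creation phase (exponential growth driven by the negative slope $-\ka$) and the subsequent decay phase (algebraic relaxation in the convex phase), and then sum the resulting contributions over all particles that cross the interface in a macroscopic time window. I expect this to require a delicate interplay between a discrete Duhamel representation for the linearized dynamics, a careful bookkeeping of crossing times synchronized with $\Xi^{(\eps)}$, and the entropy dissipation estimate to absorb the residual error terms.
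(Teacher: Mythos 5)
Your outline reproduces the paper's overall architecture (microscopic well-posedness with the one-after-another property, discrete entropy inequalities, compactness, identification of the limit via entropy pairs — this is how the Observation is rigorously substantiated by Main Result \ref{res:Main}, Proposition \ref{pro:compactness} and Theorem \ref{Thm:Limit}), but the central quantitative step is not the one the paper uses and, as stated, would fail. You hang the compactness of the interface curve on a bound for the time a single particle needs to traverse the spinodal region, claimed to scale like $\eps^2\log(1/\eps)$. No such bound holds in general: the paper points out that for suitable data the first spinodal passage can last as long as the entire observation window, and in addition particles may perform arbitrarily many spinodal excursions (entering and leaving $I_*$ on the same side) whose number and duration are never estimated. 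What actually yields the Lipschitz bound for $\Xi$ is a \emph{lower} bound on the waiting time between consecutive phase transitions, $t_{k+1}^{\#}-t_k^*\geq 2d_*/\eps$, proved by comparison with a kink-type supersolution and the decay $g_0(s)\sim(1+s)^{-1/2}$ of the discrete heat kernel (Lemma \ref{lem:waiting}, Corollary \ref{cor:time-and-number-bounds}); this bounds the number of transitions $K_\eps\leq C/\eps$ and hence the interface speed, independently of how long each spinodal visit lasts.

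The second gap is the mechanism for strong convergence and continuity of $P$. Soft energy/entropy bounds give only weak $\fspaceL^2$ compactness of the gradient, and the paper explicitly doubts that generic parabolic or BV compactness can cope with the nonmonotone dissipation; instead it exploits trilinearity through the explicit representation \eqref{eq:p-from-idata-and-fluctuation}, a slow-fast splitting with explicitly constructed slow variables giving $\ell^1$ (not weighted $\ell^2$) control (Lemma \ref{lem:prototypical}), the universal impact profile \eqref{Eqn:ImpactProfile} emitted at the end of each passage, and the decomposition of each $r^{(k)}$ into essential (regular plus residual) and negligible parts. The decisive estimates — $\sum_k D_k\leq C\eps^{-1/2}$ (Lemma \ref{lem:D_k-upper-bound}), the $\ell^1$ bound on the negligible fluctuations which neutralizes the uncontrolled excursions (Corollary \ref{cor:bounds-neg-fluct}), and the H\"older bound with prefactor $\eps^{1/2}$ obtained by summing heat-kernel increments over the roughly $\eps t$ past transitions separated by the waiting time (Lemma \ref{Lem:Hoelder}) — have no counterpart in your proposal, and your ``spectral computation plus weighted $\ell^2$ decay'' does not explain why the superposition of order $1/\eps$ fluctuation packages remains bounded and converges to a continuous field. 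Once these ingredients are in place, your final step (entropy inequalities with a step-function flux to force $\tfrac{\dint}{\dint\tau}\Xi=0$ whenever $P<p_*$ at the interface, plus the Stefan condition with $\jump{U}=\pm2$) is indeed how Theorem \ref{Thm:Limit} concludes.
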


A closer look to the evolution of single particles -- see Figures
\ref{Fig:Snapshots} and \ref{Fig:Trajectories} -- reveals the
following features of the small scale dynamics.

\begin{observation}[Phase transitions on the microscopic scale]
  \label{Obs:Micro}
  The microscopic dynamics of the phase interface are driven by
  particles $u_j$ changing their phase as follows:
  \begin{enumerate}
  \item \ul{Spinodal entrance}: A particle $u_j$ can \BMHC enter the \EMHC
    spinodal interval $I_*$ only when its two neighbors belong to
    different phases and when one of these neighbors takes value
    outside of $I_{**}$. The microscopic phase interface therefore
    propagates on the lattice because the particles undergo a phase
    transition sequentially, that is,
    they pass through the spinodal interval $I_*$ one after another.
  \item \ul{\emph{Spinodal excursions}}: Not any spinodal visit is
    related to a proper phase transitions since it may happen that a
    particle enters and leaves the spinodal interval $I_*$ on the same
    side.
  \item \ul{\emph{Strong fluctuations}} Each spinodal visit (passage
    or excursion) evokes strong microscopic fluctuations that are
    initially very localized but in turn diffusively spread over the
    lattice.
  \end{enumerate}
\end{observation}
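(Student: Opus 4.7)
Since Observation \ref{Obs:Micro} is a qualitative summary of the microscopic dynamics, my plan is to split the three claims into precise conditional statements that can be verified by combining the piecewise-linear structure of $\Phi^\prime$ with a Fourier-based stability analysis inside the spinodal interval. Throughout, I let $t_0$ denote the instant at which a given particle $u_j$ enters $I_*=\ccinterval{-u_*}{+u_*}$.

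Suppose $u_j\at{t_0}=+u_*$, so that the particle enters $I_*$ from $\Theta_+$. Continuity of the trajectory together with the requirement that $u_j$ actually cross into $\Theta_0$ forces $\dot u_j\at{t_0}\leq 0$. From \eqref{eq:phi_prime} and \eqref{Eqn:Parameters} we read off $p_j\at{t_0}=\Phi^\prime\at{+u_*}=-p_*$, so that the entry condition reads $p_{j+1}\at{t_0}+p_{j-1}\at{t_0}\leq -2p_*$. Because every neighbor with $u_k\in\Theta_+\cup I_{**}$ satisfies $p_k\geq -p_*$, strict inequality is possible only if at least one neighbor has $p_k<-p_*$, which on the increasing branches of $\Phi^\prime$ is equivalent to $u_k<-u_{**}$, i.e.\ that neighbor sits in $\Theta_-\setminus I_{**}$. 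In a single-interface configuration this then forces the remaining neighbor to lie in $\Theta_+$; the symmetric argument applies to entry from $\Theta_-$. This yields claim (i), and sequentiality of the transitions follows because the neighbor with $u_{j-1}<-u_{**}$ is automatically outside $I_*$ at time $t_0$.

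For claims (ii) and (iii) I would exploit that inside $I_*$ the ODE reduces to the backward discrete heat equation $\dot u_j=-\ka\laplace u_j$, since $\Phi^\prime\at{u}=-\ka u$ there. A Duhamel representation that treats as an inhomogeneous source the $p$-values of the neighbors currently sitting outside the spinodal window yields explicit growth formulas: plane waves $u_j\propto\e^{\iu q j}$ are amplified at rate $\e^{4\ka t\sin^2\at{q/2}}$, with fastest growth $\e^{4\ka t}$ at the Nyquist mode $q=\pi$. This exponential amplification (a) selects the exit side of $I_*$ according to whether the amplified dominant mode pushes the particle towards $+u_*$ or $-u_*$, which gives the excursion/passage dichotomy of claim (ii), and (b) produces the strongly localized high-frequency fluctuations of claim (iii). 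After re-entering $\Theta_\pm$, the coefficient $\Phi^{\prime\prime}$ switches to $+1$ and the forward discrete heat semigroup takes over, so the fluctuations spread and decay diffusively on the macroscopic scale.

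The hard part will be to make these heuristic growth and decay estimates uniform over the $\mathcal{O}\at{\eps^{-1}}$ spinodal visits that occur along the interface trajectory as $\eps\to 0$. Each visit injects fresh high-frequency fluctuations that superpose with the diffusive tails of all earlier visits, and a priori nothing prevents constructive interference from spoiling convergence of the scaled lattice data to the macroscopic fields $U,P$ of the hysteretic free boundary problem. Quantifying this superposition -- presumably in a weighted norm that tracks amplitude and spatial localization simultaneously and that is closed under the alternation of backward and forward diffusive phases -- is precisely the key challenge emphasized in the abstract and the technical heart of the paper.
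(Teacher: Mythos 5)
Your argument for claim (i) is sound and mirrors what the paper does in the proof of Proposition~\ref{pro:existence}: compute $\dot u_j = \laplace p_j$ at an entrance time using the trilinear structure to read off $p_j=\mp p_*$, bound $p_{j\pm1}$ on the remaining branches, and conclude that one neighbor must lie outside $I_{**}$. (The paper writes this out only for entry at $-u_*$, which is the case that actually occurs for single-interface solutions, but your $+u_*$ variant and the symmetric argument are fine.)

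For claims (ii) and (iii) there is a genuine gap. Your Fourier picture rests on the claim that ``inside $I_*$ the ODE reduces to $\dot u_j=-\ka\laplace u_j$'', but this would require $u_{j-1}$, $u_j$ and $u_{j+1}$ to \emph{all} lie in $\Theta_0$ so that $p=-\ka u$ at all three sites. Proposition~\ref{pro:existence} rules this out: for single-interface data exactly one particle is in the spinodal region at a time. So the correct evolution during a spinodal visit is the hybrid system in which one site diffuses backwards while the whole rest of the lattice diffuses forwards (this is Corollary~\ref{cor:structure-of-p} and the prototypical problem \eqref{Eqn:ToyProblem}). A plane wave $\mhexp{\iu qj}$ is not close to an eigenmode of that operator, so the Nyquist amplification rate $\mhexp{4\ka t}$ you obtain is not the relevant one. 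The paper instead performs a slow-fast splitting (Lemma~\ref{lem:prototypical}): there is a \emph{single} scalar unstable mode, and its spatial footprint is the exponentially localized profile $\varrho_j\propto(1+2\ka)^{-\abs{j}}$ of \eqref{Eqn:ImpactProfile}, not a high-frequency wave; the growth rate of the fast variable $z_0$ is $(2\ka)^2/(1+2\ka)$ per \eqref{Eqn:SlowDynamics}, which does not agree with $4\ka$. All remaining modes are slow and controlled in $\ell^1$ by the source and initial data. This distinction is exactly what makes the subsequent global bookkeeping tractable: the impact of each passage is a fixed localized profile $\varrho$ up to a correction of size $D_k$, and summing over $K_\eps=O(\eps^{-1})$ transitions is then managed via the $D_k$ and waiting-time estimates in \S\ref{sec:glob-fluct-estim}, rather than via a weighted-norm argument on superposed plane-wave tails. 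You have correctly identified the superposition problem as the hard part, but the tool you propose for the single-visit analysis would not produce the input (the localized impact profile and its $\ell^1$-controlled remainder) that the superposition argument needs.
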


Observations \ref{Obs:Macro} and \ref{Obs:Micro} match perfectly in
that they relate the macroscopic speed of propagation to the number of
particles that undergo a phase transition during a given period of
time. In Proposition \ref{pro:existence} we prove the crucial
one-after-another-property in a simplified single-interface setting,
and we obtain macroscopic Lipschitz estimates for the interface after
bounding the asymptotic waiting time between adjacent phase
transitions from below in Proposition \ref{lem:waiting} and Corollary
\ref{cor:time-and-number-bounds}.

The regularity observations that the macroscopic field $P$ is
continuous while the lattice data vary rapidly \BMHD on the microscopic scale seem \EMHD to contradict each other at first glance.  The
bridging idea is that macroscopic regularity can be observed in,
loosely speaking, most of the macroscopic points $\pair{\tau}{\xi}$
while the rapid microscopic fluctuations with large amplitude dominate
the dynamical behavior in a small subset of the macroscopic space-time
only. These arguments are made rigorous in \S\ref{sect:fluctuations}
and \S\ref{sec:justification} where we prove that the superposition of
all microscopic fluctuations converges as $\eps\to0$ pointwise almost
everywhere to a continuous macroscopic field that drives the phase
interface.

We also emphasize that Observation \ref{Obs:Micro} combined with the
trilinearity of $\Phi^\prime$ allows us to decompose the nonlinear
lattice \eqref{eq:master-eq} into linear subproblems as follows. As
long as no particle is inside the spinodal region, the microscopic
dynamics reduce -- thanks to $\dot{u}_j=\dot{p}_j $ -- to the discrete
heat equation for $p$, and if some $u_j$ is inside the spinodal region
we can derive a linear equation for $p$ where $p_j$ diffuses
backwards; see \S\ref{sec:prot-phase-trans} for the details. Of
course, the entire problem is still nonlinear since we have no a
priori information about the spinodal entrance or exit times and hence
do not know when to switch between the different linear
evolutions. The linear decomposition is nonetheless very useful as it
allows us to derive nearly explicit representation formulas for the
lattice data in \S\ref{sect:fluctuations}.

\begin{figure}
  \centering
  \includegraphics[width=.95\textwidth]{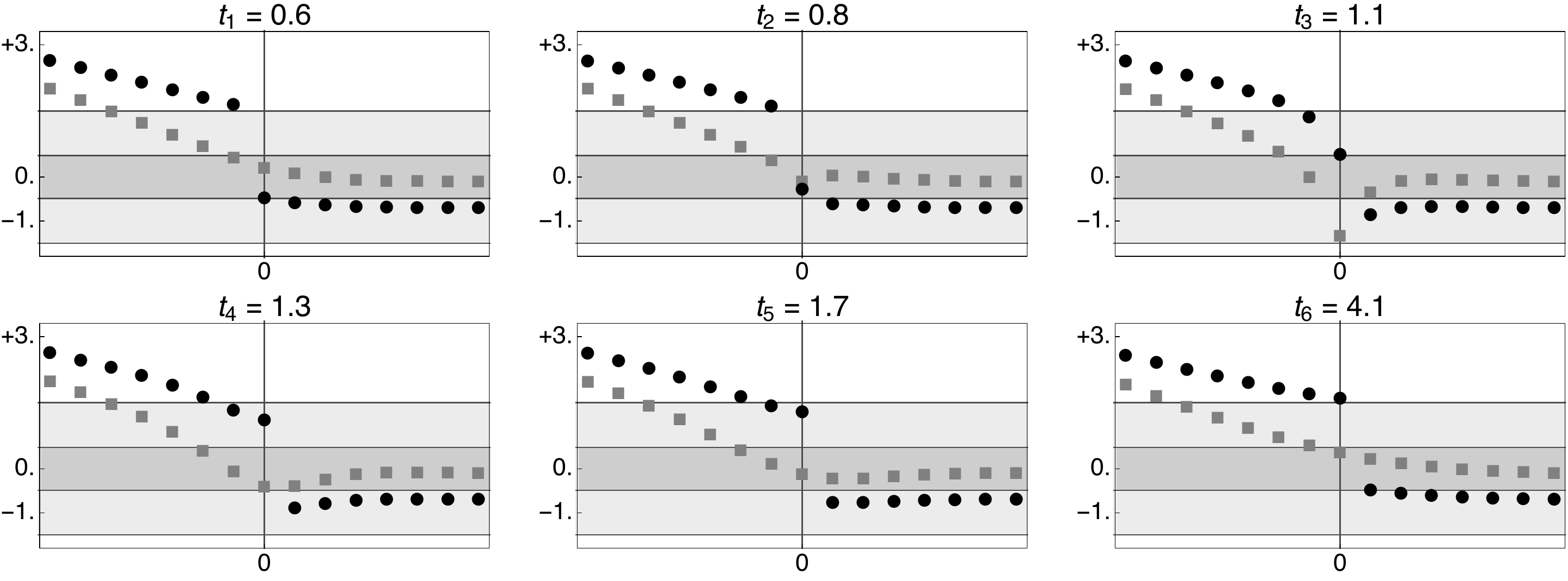}
  \caption{Snapshots of $u\at{t}$ (black points) and $p\at{t}$ (gray
    squares, affinely rescaled) against $j$ at six non-equidistant
    times near the moving interface in a typical numerical simulation;
    the horizontal gray boxes illustrate again $I_{*}$ and
    $I_{**}$. Particle $u_0$ passes the spinodal region $I_*$ between
    the times $t_1$ and $t_3$ and creates strong fluctuations which
    are still localized at $t_4$ and not spread over lattice before
    $t_5$. The next particle $u_1$ enters the spinodal region at time
    $t=t_6$.}
  \label{Fig:Snapshots}
  \bigskip
  \centering
  \includegraphics[width=.96\textwidth]{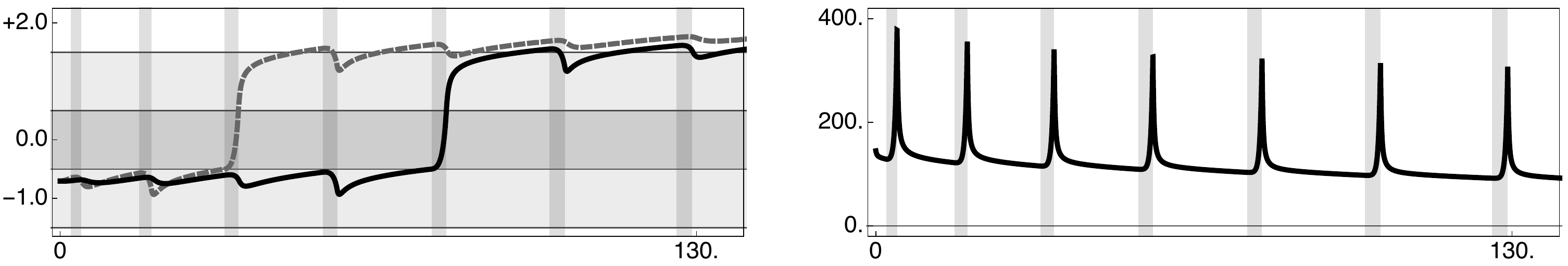}
  \caption{\emph{Left panel}. Temporal trajectories of $u_2$ (gray,
    dashed) and $u_4$ (black, solid) for the numerical data from
    Figure \ref{Fig:Snapshots}. The $k$-th vertical boxes represents
    the spinodal passage of $u_{k-1}$ during which fluctuations are
    created. \emph{Right panel}. The evolution of the lattice
    dissipation $\calD$ from \eqref{Eqn.EnergDiss} with localized peak
    for each phase transition. In this numerical example we have
    $N=200$ and relatively large initial dissipation
    $\calD\at{0}\approx 140$, so the amplitude separation between
    peaks and bulk is rather small though clearly visible.}
  \label{Fig:Trajectories}
\end{figure}

\subsection{Multiple scales and fluctuations}

The dynamics of the fluctuations are governed by a subtle interplay
between the backward diffusion inside the spinodal region and the
regularizing effects of the forward diffusion inside each phase.  We
can think of the fluctuations produced by the spinodal visit of some
particle as a localized `package' of fluctuations, which after its
creation interacts by forward diffusion with the entire lattice and
hence also with all packages evoked by former or later phase
transitions. In particular, the $\ell^\infty$-norm of each package
(amplitude) decays algebraically in time while the $\ell^1$-norm
(mass) remains conserved since the fluctuations are not damped out but
merely spread over the lattice. The microscopic lattice dynamics
is therefore related to the informal concepts
\begin{center}
  \begin{tabular}{cll}
    1.& \emph{passage time} & (time to pass the spinodal  interval $I_*$),\\
    2.&\emph{decay time} & (time needed to spread and regularize the
                           localized fluctuations),\\
    3.& \emph{waiting time}& (time between the phase transitions of
                             adjacent particles),
  \end{tabular}
\end{center}
and any mathematical analysis of the macroscopic limit $\eps\to0$
requires to understand the scaling relations of these times at least
on a heuristic level.

We already mentioned that our asymptotic approach involves a precise
lower bound for the waiting time as established for well-prepared
initial data in Corollary \ref{cor:time-and-number-bounds}. Moreover,
in \eqref{Eqn:ImpactProfile} we identify a universal \emph{impact
  profile}, which provides the asymptotic shape of each package in the
limit $\eps\to0$ and enables us in the proof of Lemma
\ref{Lem:Hoelder} to compute a microscopic time period of order
$\eps^{-1}$ after which each package has been sufficiently regularized
by the forward diffusion. This result can be regarded as an upper
bound for the decay time although we state it differently and focus on
the implied H\"older estimates for the regular fluctuations.

The heuristic concept of the passage time is a bit more involved. By
splitting the microscopic dynamics during a spinodal passage into
their slow and fast \BMHD parts, we show \EMHD in
\S\ref{sec:prot-phase-trans} that the \emph{typical} passage time is
of order $\ln \eps$ due to the exponential \BMHC growth \EMHC  of the fast variable.
On the other hand, one can construct special initial data such that
the first passage time is as large as the observation time. \BMHD Even in this case, however, we can \EMHD pass to the macroscopic limit since the
interface does not move and because our results in
\S\ref{sec:glob-fluct-estim} imply, roughly speaking, that the
fluctuations remain localized for all times and hence small with
respect to macroscopic norms.  By similar arguments we also control
the cumulative impact of the spinodal excursion in
Corollary~\ref{cor:bounds-neg-fluct} and do not attempt to estimate
their number or duration.

The fluctuations as well as the different times scales can also be
related to energetic concepts by regarding the lattice ODE
\eqref{eq:master-eq} as gradient flow with respect to the spatially
discrete analog to the $\fspaceH^{-1}$-metric structure. In
particular, for finite systems with either periodic or homogeneous
Neumann boundary conditions we readily verify the energy law
\begin{align*}
  \tfrac{\dint{}}{\dint{t}}\calE\at{t}
  =
  -\eps^2\calD\at{t},
\end{align*}
where 
\begin{align}
  \label{Eqn.EnergDiss}
  \calE\at{t}:=N^{-1} \sum_{j=1}^N\Phi\at{u_j},
  \qquad
  \calD\at{t}:=N\sum_{j=1}^N\at{p_{j+1}-p_j}^2
\end{align}
denote the \emph{averaged energy} and the \emph{dissipation},
respectively and both have been scaled such that the formal
identification \eqref{Eqn:Identification} complies with the
macroscopic formulas
\begin{align*}
  \calE\at{t}
  \cong
  \int\limits_{0}^1 \Phi\bat{U\pair{\eps^2t}{\xi}}\dint\xi,
  \qquad 
  \calD\at{t}
  \cong
  \int\limits_{0}^1 \bat{\partial_\xi P\pair{\eps^2t}{\xi}}^2\dint\xi.
\end{align*}
Notice that the single-particle energy follows from
\eqref{eq:phi_prime} up to an additive constant \BMHC and \EMHC reads
\begin{align}
  \label{Eqn:DoubleWell}
  \Phi\at{u}
  =
  \frac12
  \begin{cases}
    \at{u+1}^2      &\text{if } u \leq - u_*, \\
    \at{u-1}^2      &\text{if } u \geq +u_*, \\
    p_*- \kappa u^2 &\text{if } -u_* < u < +u_*.
  \end{cases}
\end{align}
From \eqref{Eqn.EnergDiss} we infer for small $\eps>0$ the heuristic
equivalence
\begin{align*}
  \calD\at{t}\sim 1 \quad   \quad \text{if and only if} \quad  \quad 
  P \pair{\eps^2t}{\cdot}
  \text{ is regular with weak derivative }
  \partial_\xi P\pair{\eps^2t}{\cdot},
\end{align*}
and conclude that the localized lattice fluctuations give rise to a
significant increase in the dissipation. In other words, the interface
dissipation stemming from microscopic phase transitions exceeds the
regular dissipation coming from the macroscopic bulk diffusion.  See
the right panel in Figure \ref{Fig:Trajectories} for typical numerical
data and note that our asymptotic formulas ensure that
$\calD\at{t}\sim N=\eps^{-1}$ at the end of each microscopic phase
transition.

The energy equality for gradient flows
\begin{align*}
  \int\limits_0^\infty \calD\at{\eps^{-2}\tau}\dint{\tau}
  =
  \calE\at{0}-\calE\at{\infty},
\end{align*}
reveals that the initial energy bounds the total number of
microscopic phase transitions and hence \BMHD also the maximal propagation
distance of \EMHD the macroscopic interface as well as the averaged impact
of all fluctuations. It seems therefore tempting to tackle the
macroscopic limit $\eps\to0$ by variational methods and to show that
the gradient flow of the lattice $\Gamma$-converges to the hysteretic
free boundary problem \eqref{Eqn:PDEIllPosed},
\eqref{Eqn:StefanCondition} and \eqref{Eqn:FlowRule} whose variational
structure is described in \cite{Visintin06}. Such approaches have \BMHD been exploited \EMHD in other micro-to-macro transitions, see for instance
\cite{OtRe07,Serfaty11,BeBeMaNo12,MiTu12,PeSaVe13,Braides14} for
different frameworks, and are usually quite robust. It is, however,
not clear to the authors whether variational methods are capable of
resolving the complicate dynamical behavior of \eqref{eq:master-eq}
with non-monotone dissipation and \BMHC temporally \EMHC varying regularity of the
microscopic data.

We finally recall that the above heuristic discussion of the lattice
dynamic is restricted to well-prepared macroscopic single-interface
data. All arguments can be adapted to the case of finitely many phase
interfaces but other classes of initial data are more crucial. For
instance, numerical simulations with oscillatory single-interface
indicate the existence of an initial transient regime during which the
systems dissipates a huge amount of energy before it reaches a state
with macroscopic regularity for the first time. It seems, however,
that there is no simple way to estimate the duration of the transient
regime because a large number of phase transitions might push the
phase interface over a long distance and produce many additional
fluctuations. The dynamics of multi-phase initial data with
oscillatory phase fraction or data with many particles inside the
spinodal region are even more complicated since we expect to find
measure-valued solutions on the macroscopic scale as well as phase
interfaces that connect a pure-phase region with a mixed-phase
one. First results in this direction have been obtained in \cite{Holle16}
for a bilinear nonlinearity and a periodic pattern for the microscopic
phase field, but in the general case with an irregular distribution of
phases it is not even clear what the analog to the hysteretic flow
rule \eqref{Eqn:FlowRule} is. Moreover, for arbitrary initial data
there is an extra transient regime related to the spinodal
decomposition of particles but it seems hard to show that the latter
happens in a sufficiently short period of time.

\subsection{Main result and plan of paper}
\label{sect:mainresult}

In this paper we derive the hysteretic free boundary problem
\eqref{Eqn:PDEIllPosed}, \eqref{Eqn:StefanCondition} and
\eqref{Eqn:FlowRule} in the trilinear case \eqref{eq:phi_prime} and
for well-prepared single-interface initial data on the $\Zset$. The
prototypical example \BMHD of the \EMHD latter stems -- as in Figures
\ref{Fig:Front_2} and \ref{Fig:Front_1} -- from a macroscopic initial
datum with single interface located at $\xi=\Xi_\ini$ and phases
\BMHC $\Theta_+$ and $\Theta_-$ \EMHC corresponding to $\xi<\Xi_\ini$ and
$\xi>\Xi_\ini$, respectively. More precisely, after choosing a
bounded, continuous, and piecewise smooth function $P_\ini$ on $\Rset$
such that
\begin{align*}
  P_\ini\at{\xi}>-p_*
  \quad \text{for}\quad \xi<\Xi_\ini,
  \qquad
  P_\ini\at{\xi}\in J_*
  \quad \text{for}\quad \xi>\Xi_\ini,
\end{align*}
we consistently set 
\begin{align*}
\BMHC  U_\ini\at{\xi}:=P_\ini\at\xi+1\in\Theta_+
  \quad \text{for}\quad
  \xi<\Xi_\ini,
  \qquad
  U_\ini\at{\xi}:=P_\ini\at\xi-1\in\Theta_-\cap I_{**}
  \quad \text{for}\quad \xi>\Xi_\ini \EMHC
\end{align*}
and initialize the lattice data by a discrete sampling via
\eqref{Eqn:Identification}. Due to the upper bound  \BMHD $P_\ini\at\xi\leq +p_*$ \EMHC for
$\xi>\Xi_\ini$, the phase interface can propagate only to the right but it
can switch between standing and moving by (several) pinning or
depinning events.

For such initial data, the macroscopic model predicts a unique
interface curve $\Xi$ with phase field
\begin{align*}
  M\pair{\tau}{\xi}
  =
  \sgn U\pair{\tau}{\xi}
  =
  \sgn\bat{\Xi\at\tau-\xi}=U\pair{\tau}{\xi} -P\pair{\tau}{\xi}
\end{align*}
as well as $\jump{U}=\jump{P+M}=\jump{M}=-2$ at $\xi=\Xi\at\tau$ and
for all times $\tau\geq0$. We can therefore eliminate both $U$ and $M$
\BMHD in the limit problem \EMHD
and summarize our main findings as follows.

\begin{result}[Lattice data satisfy hysteretic Stefan problem]
  \label{res:Main}
  For macroscopic single-interface initial data as described above,
  the scaled lattice data converge as $\eps\to0$ to a solution of the
  hysteretic free boundary problem. In particular, the limit consists
  of a macroscopic field $P$ along with a nondecreasing interface
  curve $\Ga=\{\pair{\tau}{\xi}\;:\;\xi=\Xi\at\tau\}$ such that the
  following equations are satisfied:
  \begin{align}
    \text{linear bulk diffusion outside $\Ga$}:
    \quad
    &\partial_\tau P = \partial_\xi^2 P \label{Eqn:LimitBulkDiff}
    \\
    \text{Stefan condition across $\Ga$}:
    \quad
    &2\tfrac{\dint}{\dint\tau}\Xi = \jump{\partial_\xi P}
      \quad\text{and}\quad {\jump{P}}=0& \label{Eqn:LimitStefan}
    \\
    \text{hysteretic flow rule on $\Ga$}:
    \quad
    &P=+p_* \text{ if } \tfrac{\dint \Xi}{\dint\tau}>0
      \quad\text{and}\quad
      \tfrac{\dint\Xi}{\dint\tau}=0 \text{ if } P\in \BMHD\oointerval{-p_*}{+p_*}\EMHD
      \label{Eqn:LimitFlowRule}
  \end{align}
  Moreover, $\Xi$ and $P$ are Lipschitz and locally H\"older
  continuous, respectively, and uniquely determined by $\Xi_\ini$ and
  $P_\ini$.
\end{result}

The conditions on the initial data are made precise in Assumption
\ref{ass:macro}, and the limit is established in several steps in
\S\ref{sec:justification}. Proposition \ref{pro:compactness} first
provides macroscopic compactness of the scaled lattice data and in
Theorem \ref{Thm:Limit} we verify the limit dynamics along convergent
subsequences. Both the convergence and the uniqueness statement then
follow because the Cauchy problem for \eqref{Eqn:LimitBulkDiff},
\eqref{Eqn:LimitStefan} and \eqref{Eqn:LimitFlowRule} is well-posed,
see \cite{Visintin06} and \cite{MaTeTe09} for approaches via
hysteresis operators and entropy inequalities, respectively.

The paper is organized as follows. In \S\ref{sect:lattice} we prove
well-posedness for microscopic single-interface solutions, derive a
lower bound on the waiting time, and establish the entropy balances on
the discrete level. \S\ref{sect:fluctuations} is the main analytical
part of this paper and concerns the macroscopic impact of the
microscopic fluctuations. First, studying a linear model problem for a
spinodal visit in \S\ref{sec:prot-phase-trans}, we characterize the
backward-diffusion inside the spinodal region as the interaction of a
scalar unstable mode with infinitely many slowly varying variables
(slow-fast splitting). Afterwards we identify in \S\ref{sect:defFluct}
and \S\ref{sec:local-fluct-estim} the microscopic fluctuations
produced by a single particle and separate their essential part from
the negligible one, where the former is given by the universal impact
profile and the latter can be estimated with the help of the slow
variables from the model problem. In \S\ref{sec:glob-fluct-estim} and
\S\ref{sec:regul-fluct} we deal with the superposition of all
fluctuations and prove H\"older estimates for the regular part of the
essential fluctuation as well as vanishing bounds for their residual
part and for the negligible fluctuations. In \S\ref{sec:justification}
we finally pass to the limit $\eps\to0$ and derive the Main Result
\ref{res:Main}.  Since the spinodal effects are well-controlled by the
fluctuation estimates from \S\ref{sect:fluctuations}, the
corresponding arguments are similar to those from \cite{HeHe13} for
the bilinear limiting case $\ka=\infty$.

We  emphasize that the results of \S\ref{sect:lattice} can be
generalized to more general bistable nonlinearities while our analysis
in \S\ref{sect:fluctuations} is intimately connected to the
trilinearity of $\Phi'$ as it relies on linear substitute problems and
the superposition principle.  Moreover, for general nonlinearities it
is not clear what the analog to the aforementioned slow-fast splitting
is.

\subsection{On the numerical simulations}
\label{sect:numerics}

\BMHC
To conclude this introduction we describe the numerical scheme
that was used for the computation of the examples in Figures \ref{Fig:Front_2} and \ref{Fig:Front_1}. Fixing a finite particle number $N$, we impose homogeneous Neumann boundary conditions
\begin{align*}
u_0\equiv u_1,\qquad
u_{N+1}\equiv u_N
\end{align*}
and prescribe the initial data by
\begin{align*}
u_j\at{0}=c_\pm + d_\pm \arctan\at{\eps j +e_\pm} \quad \text{for}\quad j\gtrless j_*
\end{align*}
with $\eps=1/N$. Here, $j_*$ denotes the initial position of the single interface and 
the constants $c_\pm$, $d_\pm$, and  $e_\pm$ have been chosen carefully for any example to  produce illustrative results, see the snaphots for $\tau=0$.
\par
We solve the ODE analog to the lattice \eqref{eq:master-eq} by the explicit Euler scheme, which is easy to implement. Of course, the numerical time step size  $\delta{t}$ must be chosen sufficiently small and in accordance with the macrosocpic CFL condition
\begin{align*}
\frac{\delta{\tau}}{\delta\xi^2}=\frac{\eps^2\delta t}{\at{\eps\delta j}^2}=\delta t
< \la_{\max}\,,
\end{align*}
where the largest eigenvalue $\la_{\max}$ of the discrete Laplacian $-\Delta$ is basically independent of the system size $N$ and can be computed by discrete Fourier transform. 
\par
The numerical properties of the Euler scheme have already been investigated in \cite{LaMa12}, and the authors  \BMHD there \BMHC regard the onset of strong oscillations as a drawback of the discretization. They also propose a semi-implicit scheme for the time integration of \eqref{eq:master-eq}, which is unconditionally stable but requires to monitor the spinodal  entrance and exit times, as well as a numerical algorithm for the computation of two-phase solutions to the free boundary problem \eqref{Eqn:LimitBulkDiff}--\eqref{Eqn:LimitFlowRule}. The latter scheme provides approximate solutions without spatial and temporal fluctuations as it imposes microscopic transmission conditions at the interface which are derived from the macroscopic entropy inequalities \eqref{Eqn:EntropyLaw}. 
\par
The oscillations in the Euler scheme are caused by the spinodal visits of particles and correspond precisely to the fluctuations described above on the level of the lattice equation \BMHD with continuous time variable. \BMHC Moreover, in view of the macroscopic free boundary problem one might in fact regard the microscopic oscillations as incorrect or spurious, but our analysis suggests a \BMHD complementary interpretation. \BMHC The fluctuations are the inevitable echo of the microscopic phase transitions, which 
drive the interface on large scales according to the hysteric flow rule \eqref{Eqn:FlowRule} and \BMHD explain why the thermodynamic fields comply with the entropy conditions \eqref{Eqn:EntropyLaw} at all. \BMHC In this context we emphasize that the solutions to the 
viscous approximation \eqref{Eqn:ViscousApp} also exhibit strong oscillations and one might argue that the rigorous passage to the limit $\eps\to0$ is still open because the fine structure of these oscillations has not yet been investigated carefully.
\EMHC

\section{Properties of the lattice dynamics}
\label{sect:lattice}

In this section we investigate the dynamical properties of the
diffusive lattice \eqref{eq:master-eq} with trilinear $\Phi^\prime$ as
in \eqref{eq:phi_prime}.  All arguments, however, can \BMHD be generalized to other bistable nonlinearities at the cost of more
technical and notational efforts.\EMHC

\subsection{Existence of single-interface solutions}

We first introduce the notion of single-interface solutions and
establish their existence and uniqueness. Furthermore, we derive some
basic properties concerning the dynamics of $p = \Phi'(u)$.

\begin{definition}[Single-interface solution]
  \label{def:single-iface-solution}
  A differentiable function $u \colon [0,\infty) \to \ell^\infty(\bbZ)$ is a
  \emph{single-interface solution} to \eqref{eq:master-eq} if $u$
  satisfies the differential equation \eqref{eq:master-eq} and if
  there exists a non-decreasing sequence
  $(t_k^*)_{k \geq k_1} \subset (0,\infty]$, $k_1 \in \bbZ$ such that
   the following conditions are satisfied for
  all $k \geq k_1$ \BMHD and with $t_{k_1-1}^* := 0$\EMHC:
  \begin{enumerate}
  \item We have either $t_k^* = \infty$ or $t_{k+1}^* > t_k^*$.
  \item If $t_{k-1}^*<\infty$, then $u$ takes values in
    the state space
    \begin{align*}
      X_k
      =
      \Big\{u \in \ell^\infty(\bbZ) \colon
      &u_* < \inf_{j<k} u_j \leq \sup_{j<k} u_j < \infty, \quad
      -u_{**} < \inf_{j>k} u_j \leq \sup_{j>k} u_j < -u_*, \\
      &-u_{**} < u_k < u_* \Big\}
    \end{align*}
    \BMHD on \EMHD the time interval $(t_{k-1}^*, t_{k}^*)$.
  \end{enumerate}
\end{definition}

\begin{figure}
\centering
\includegraphics[width=.85\textwidth]{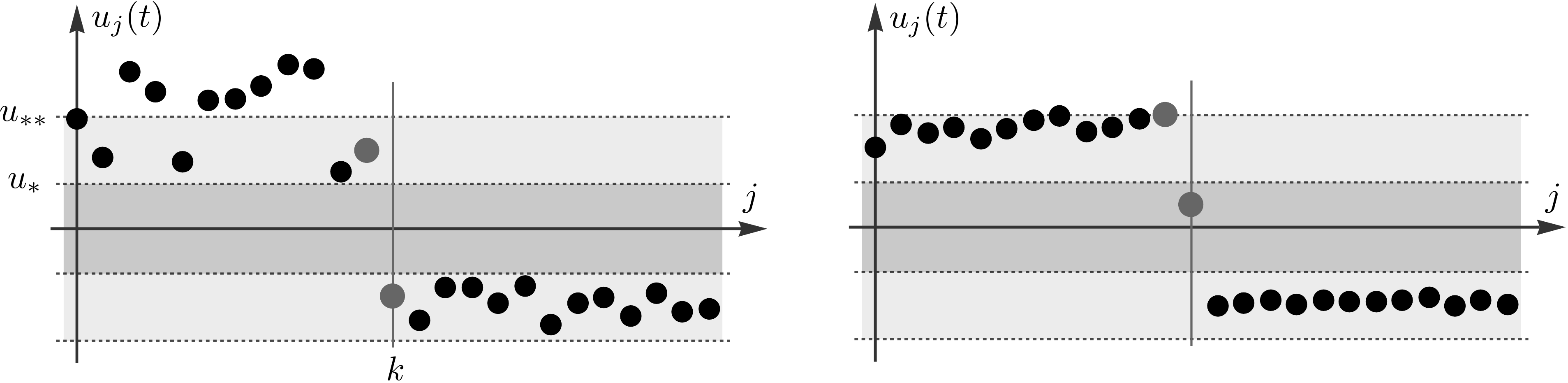}
\caption{Two examples of single-interface states from $X_k$ as in
  Definition \ref{def:single-iface-solution}, where $u_{k-1}$ and
  $u_k$ are highlighted. At the phase transition time $t^*_k$ we have
  $u_k\at{t^*_k}=u_*$ as well as $\dot{u}_k\at{t^*_k}>0$ according to
  Proposition \ref{pro:existence} and the system moves into
  $X_{k+1}$.}
\label{fig:single-interface}
\end{figure}

If $u$ is a single-interface solution with $u(t) \in X_k$ for some
$k \in \bbZ$ and $t>0$ then $u_j(t)$ belongs to the \emph{positive
  phase} \BMHD $\Theta_+$ \EMHD for $j<k$ and to the \emph{negative phase}
\BMHD $\Theta_-$ \EMHD for $j>k$, respectively; see Figure
\ref{fig:single-interface}. At the \emph{microscopic interface} $j=k$,
however, $u_k(t)$ may be either in the negative phase or in the
spinodal interval \BMHD $\Theta_0$. \EMHD Moreover, $u_k$ may enter and leave the
spinodal region via $u_k = -u_*$ several times during the dynamics of
\eqref{eq:master-eq} in $X_k$, and we refer to the time intervals
where \BMHD $u_k \in \Theta_0$ \EMHC as \emph{spinodal visits} of $u_k$.  On
the other hand, the evolution continues in $X_{k+1}$ once $u_k$ passes
through $u_k = +u_*$ at some \emph{phase transition time} $t_k^*$.

The following proposition adapts \cite[Theorem 3.2]{HeHe13} to the
present potential and provides the existence and uniqueness of
single-interface solutions, \BMHC where we assume from now on that 
$k_1=1$.  The crucial argument is to show that the particles pass the spinodal region one after another. We derive this property in the framework of comparison principles but mention that a similar observation 
has been reported in \cite{LaMa12}.\EMHC

\begin{proposition}[Well-posedness of single-interface solutions]
  \label{pro:existence}
  For given initial data $u\at{0} \in X_1$ there exists a unique
  single-interface solution $u$ to \eqref{eq:master-eq}, \BMHD and this solution
  satisfies
  \begin{equation}
    \label{eq:bounds-on-u}
    -u_{**}
    \leq
    u_j(t)
    \leq \BMHC \max \Big( u_{**}, \sup_{j \in \bbZ} u_j(0) \Big)\EMHC
  \end{equation}
  for all $t \geq 0$ and $j \in \bbZ$. Moreover, the entrance condition \EMHD
  \begin{equation*}
    u_k(t_k^*) = u_*,
    \qquad
    \dot u_k(t_k^*) > 0
    \qquad\text{and}\qquad
    t_{k+1}^* - t_k^*
    \geq
    C
  \end{equation*}
  \BMHD holds for any $k \geq 1$ with $t_k^*<\infty$, where $C>0$
  depends only on $\Phi$ and the initial data, \BMHD and the exit condition \EMHD
  \begin{equation}
    \label{eq:spinodal-entrance-condition}
    u_{k-1}(t) > u_{**},\qquad 
  \end{equation}
  \BMHD holds at any \EMHD time $t>0$ with $u_k(t) = -u_*$ and $\dot u_k(t) \geq 0$.
\end{proposition}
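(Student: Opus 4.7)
The plan is to construct the single-interface solution inductively on the phase index $k$. Since $\Phi'$ is globally Lipschitz on $\bbR$, the right-hand side of \eqref{eq:master-eq} defines a Lipschitz vector field on $\ell^\infty(\bbZ)$, so Picard--Lindel\"of yields unique local-in-time solutions. I would show that a solution starting in $X_k$ remains in $X_k$ until the first time $t_k^* \in (0,\infty]$ at which $u_k$ reaches $+u_*$; if $t_k^* < \infty$, the evolution is restarted in $X_{k+1}$ with initial datum $u(t_k^*)$, and iteration produces the global single-interface solution. Uniqueness then reduces to Picard--Lindel\"of uniqueness on each subinterval $(t_{k-1}^*, t_k^*)$.

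\textbf{A priori bounds and preservation of the interface structure.} The central ingredient is that $p_j = \Phi'(u_j)$ satisfies the linear discrete parabolic equation
\begin{equation*}
\dot p_j = \Phi''(u_j)\,\Delta p_j,
\end{equation*}
with $\Phi''(u_j) = 1$ in either outer phase and $\Phi''(u_j) = -\kappa$ in the spinodal. Only $p_k$ can diffuse backwards in $X_k$, but the inclusion $u_k \in (-u_{**}, u_*)$ built into $X_k$ combined with \eqref{eq:phi_prime} gives the pointwise bound $|p_k(t)| \leq p_*$ as long as the solution stays in $X_k$. The subsystem $(p_j)_{j \neq k}$ therefore obeys the discrete forward heat equation with uniformly bounded coupling to $p_k$, and the standard discrete maximum principle yields $|p_j(t)| \leq M := \max\bigl(p_*, \sup_i |p_i(0)|\bigr)$ for all $j, t$. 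Re-expressing this bound through $\Phi'$ in each phase produces \eqref{eq:bounds-on-u}. The preservation of the outer phases for $j < k-1$ and $j > k+1$ is immediate from the pure heat equation $\dot u_j = \Delta u_j$ valid in these ranges. For the critical indices $j = k \pm 1$, a strong-maximum / Hopf-type argument on the two half-lines $\{j < k\}$ and $\{j > k\}$, using the bound $|p_k| \leq p_*$ together with the strict initial separation $u_{k-1}(0) > u_*$ and $u_{k+1}(0) < -u_*$, rules out any tangential contact of $u_{k \pm 1}$ with the spinodal boundary.

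\textbf{Exit, entrance, and waiting time.} The exit condition \eqref{eq:spinodal-entrance-condition} is essentially algebraic: at any $t$ with $u_k(t) = -u_*$ and $\dot u_k(t) \geq 0$ one has $p_k(t) = +p_*$, so the identity $\dot u_k(t) = p_{k-1}(t) + p_{k+1}(t) - 2 p_*$ combined with the strict bound $p_{k+1}(t) < p_*$ (which follows from $u_{k+1}(t) < -u_*$ in $X_k$) forces $p_{k-1}(t) > p_*$, i.e., $u_{k-1}(t) > u_{**}$. The analogous computation at $t = t_k^*$ gives $\dot u_k(t_k^*) = p_{k-1}(t_k^*) + p_{k+1}(t_k^*) + 2 p_*$, and the strict inequalities $p_{k \pm 1}(t_k^*) > -p_*$ preserved by the Hopf argument yield $\dot u_k(t_k^*) > 0$. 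The waiting-time lower bound is then controlled by the uniform speed estimate $|\dot u_j(t)| = |\Delta p_j(t)| \leq 4 M$: since $u_{k+1}(t_k^*) < -u_*$ while $u_{k+1}(t_{k+1}^*) = +u_*$, the required amplitude change of at least $2 u_*$ forces $t_{k+1}^* - t_k^* \geq u_*/(2M) =: C$, with $C$ depending only on $\Phi$ and the initial data through $M$.

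\textbf{Main obstacle.} The genuinely delicate step is the Hopf-type preservation of the strict interface inequalities $u_{k-1} > u_*$ and $u_{k+1} < -u_*$ throughout the possibly many spinodal excursions of $u_k$: each excursion couples the interface particle to its neighbors via the backward-diffusion source $p_k$, and a naive discrete comparison does not immediately rule out that $u_{k \pm 1}$ touches the spinodal boundary at some intermediate time. The resolution is to combine the tight bound $|p_k| \leq p_*$ (which holds throughout the entire $X_k$-phase) with a strong maximum principle on each half-line, so that any hypothetical contact would propagate back to the initial datum and contradict the strict separation there. This same argument also justifies that the strict inequalities persist up to and including the phase-transition time $t_k^*$, which is precisely what makes the entrance condition meaningful.
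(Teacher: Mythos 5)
Your overall strategy — constructing the single-interface solution phase by phase, bounding $p$ via a maximum principle on each half-line, and iterating across the transition times $t_k^*$ — is sound, and in two places you actually simplify the paper's argument: your waiting-time bound via the uniform speed estimate $|\dot u_j|=|\Delta p_j|\le 4M$ together with the amplitude change $|u_{k+1}(t_{k+1}^*)-u_{k+1}(t_k^*)|\ge 2u_*$ is cleaner than the paper's $p_1$-ODE comparison, and your max-principle route to the $p$-bounds is legitimate. For comparison, the paper obtains \eqref{eq:bounds-on-u} first and globally — showing directly that the box $\{ -u_{**}\le u_j\le D\}$ is forward invariant under a componentwise scalar ODE comparison, independently of the interface structure — and then preserves $X_k$ by componentwise ODE comparison on $p_j$ rather than a strong maximum principle; both of your alternatives can be made to work.

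There is, however, a genuine gap. You assert that "a solution starting in $X_k$ remains in $X_k$ until the first time at which $u_k$ reaches $+u_*$," and you take care to rule out the neighbors $u_{k\pm 1}$ touching the spinodal boundary — but you never rule out the interface particle $u_k$ itself exiting $X_k$ through the other end, i.e.\ $u_k\to -u_{**}$. The definition of $X_k$ requires $-u_{**}<u_k<u_*$, so this is a real escape route that your argument, as stated, leaves open; your pointwise bound $|p_k|\le p_*$ is derived under the standing assumption $u_k\in(-u_{**},u_*)$ and does not by itself preclude $u_k$ from reaching $-u_{**}$ (where $p_k=-p_*$). The paper closes this by observing that whenever $u_k<-u_*$ one has $\dot p_k + 2p_k = p_{k+1}+p_{k-1}\ge -2p_*$, and the scalar comparison $p_k(t)\ge -p_*(1-e^{-2(t-s)})+p_k(s)e^{-2(t-s)}$ keeps $p_k>-p_*$ strictly as long as $p_k(s)>-p_*$; you need an argument of this type (or a strong-minimum statement for $p_k$) to make the existence claim airtight.

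One further imprecision worth fixing: you state the two-sided bound $|p_j(t)|\le M:=\max(p_*,\sup_i|p_i(0)|)$ and then claim that "re-expressing this bound through $\Phi'$ in each phase produces \eqref{eq:bounds-on-u}." If $\sup_i|p_i(0)|>p_*$ the lower bound $p_j\ge -M$ only gives $u_j\ge -M-1<-u_{**}$ in the negative phase, which is weaker than \eqref{eq:bounds-on-u}. You need the one-sided minimum principle $p_j(t)\ge -p_*$ (which does hold, since the boundary datum satisfies $p_k\ge -p_*$ and the initial data satisfy $p_j(0)>-p_*$) rather than the symmetric bound; the symmetric $M$ is fine for the upper bound and for the speed estimate, but not for the lower bound in \eqref{eq:bounds-on-u}.
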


\begin{proof}\emph{\ul{Existence and uniqueness}}:
 \BMHC The right hand side $\laplace \Phi'(\cdot)$ of \eqref{eq:master-eq}
  is Lipschitz continuous with respect to the $\ell^\infty$-norm of
  $u$, so Picard's theorem yields the local existence and uniqueness
  of a continuously differentiable solution with values in
  $\ell^\infty(\bbZ)$. Moreover, denoting the upper bound in \eqref{eq:bounds-on-u} by $D$ and introducing the state set
  \begin{align*}		
   Y&:=\big\{ u\in\ell^\infty\;:\; -u_{**} \leq u_j \leq D\quad \text{for all $j\in\Zset$}\big\}
  \end{align*}
we infer from the properties of $\Phi^\prime$ the implication
\begin{align*}
\bat{u_j\at{t}}_{j\in\Zset}=Y\qquad \implies \qquad 2\Phi^\prime\at{-u_{**}}\leq \dot{u}_j\at{t}+ 2\Phi^\prime\bat{u_j\at{t}}\leq 2\Phi^\prime\at{D}\;\;\text{for all}\;\;j\in\Zset\,.
\end{align*}
The comparison principle for scalar ODEs reveals that $Y$ is a forwardly invariant region for \eqref{eq:master-eq}, and
this ensures the global existence of solutions with \eqref{eq:bounds-on-u}.
\EMHC

  \par
  \underline{\emph{Evolution in $X_1$:}}
  For $u(t) \in X_1$ the dynamics of \BMHC $p_j(t) = \Phi'(u_j(t))$ \EMHC are governed
  by
  \begin{align*}
    \dot p_j(t)
    =
    \dot u_j(t)
    =
    \laplace p_j(t)
    \qquad \text{for} \qquad
    j\neq1,
  \end{align*}
  and together with \eqref{eq:bounds-on-u} we obtain
  \begin{alignat*}{3}
    - 2 p_* &\leq \dot p_j(t) + 2 p_j(t) & &\leq 2 \Phi'(D)
    &\qquad\text{for } j<1,
    \\
    - 2 p_* &\leq \dot p_j(t) + 2 p_j(t) & &\leq 2p_*
    &\qquad\text{for } j>1.
  \end{alignat*}
  \BMHD The comparison principle yields \EMHD
  \begin{alignat*}{2}
    p_j(t) &\geq - p_* \left( 1- e^{-2t} \right) + p_j(0) \e^{-2t}
    & &\qquad\text{for } j \not= 1,
    \\
    p_j(t) &\leq + p_* \left( 1- e^{-2t} \right) + p_j(0) \e^{-2t}
    & &\qquad\text{for } j > 1
  \end{alignat*}
  and from \BMHD the \EMHD continuity of $u$ we infer that $u(t) \in X_1$ \BMHD holds unless \EMHD $u_1$ reaches either $-u_{**}$ or $u_*$.
  In addition, if $u_1(t)$ is not inside the spinodal region, that is
  if $u_1(t) < -u_*$, then we have $\dot p_1(t) \geq -2 p_* -2 p_1(t)$
  and this implies that $u_1(t)$ cannot reach $-u_{**}$. Hence, $u(t)$
  either remains inside $X_1$ forever, which means $t_1^*:=\infty$, or
  $u(t)$ reaches $\partial X_1 \cap \partial X_2$ at some time
  $t_1^* \in (0,\infty)$ with $u_1(t_1^*) = u_*$.

  \par
  \underline{\emph{Spinodal exit and entrance condition:}}
  For $t^*_1<\infty$ we have
  \begin{equation*}
    \dot u_1(t_1^*)
    =
    \laplace p_1(t_1^*)
    =
    p_0(t_1^*) + p_2(t_1^*) - 2 (-p_*)
    >
    0
  \end{equation*}
  since $p_j(t_1^*)> -p_*$ for $j \not= 1$, and we conclude that at
  \BMHD the exit \EMHD time $t_1^*$ the solution $u$ runs into $X_2$ with positive
  speed. Now suppose that $t \in (0,t_1^*)$ is \BMHD an entrance \EMHC time such that
  $u_1(t) = -u_*$ and $\dot u_1(t) \geq 0$. Then we compute
  \begin{equation*}
    0
    \leq
    \dot u_1(t)
    =
    p_0(t) + p_2(t) - 2 p_*
    <
    p_0(t) - p_*
  \end{equation*}
  and obtain \eqref{eq:spinodal-entrance-condition}.

  \par
  \underline{\emph{Lower bound for $t_2^*-t_1^*$:}}
  Repeating the two preceding steps in the case of \BMHD $t_1^*<t^*_2<\infty$, \EMHD we
  see that $u(t) \in X_2$ for $t \in (t_1^*,t_2^*)$, \BMHD and that $p_1\at{t} > p_*$ holds at any entrance time with
  $u_2\at{t}=-u_*$ and $\dot u_2\at{t} \geq 0$.  \EMHD Moreover, for $t_2^*<\infty$
  there \BMHD exists a time \EMHC  $t_2^{\#} \in (t_1^*, t_2^*)$ such that $u_2(t_2^\#)=-u_*$
  for the first time, and this implies
  \begin{equation*}
    \dot p_1(t)
    =
    \laplace p_1(t)
    \leq
    \Phi'(D) + p_* - 2 p_1(t)
    \quad \text{for}\quad
    t \in (t_1^*,t_2^\#),
    \qquad
    p_1 \at{t_1^*} = -p_*,
    \qquad
    p_1 \nat{t_2^{\#}} \geq +p_{*}.
  \end{equation*}
  The comparison principle for ODEs yields
  \begin{align*}
    p_*
    \leq
    p_1\nat{t_2^\#}
    \leq
    \tfrac12
    \bat{\Phi^\prime\at{D}+p_*}
    \bat{1-\e^{-2\nat{t_2^\#-t_1^*}}}
    - p_*\e^{-2\nat{t_2^\#-t_1^*}}
  \end{align*}
  and after rearranging terms we obtain via
  \begin{align}
    \label{eq:bounds-on-u.PEqn1}
    \e^{2\nat{t_2^*-t_1^*}}
    \geq
    \e^{2\nat{t_2^\#-t_1^*}}
    \geq
    \frac{\Phi^\prime\at{D}+3p_*}{\Phi^\prime\at{D}-p_*}
  \end{align}
 \BMHD a lower bound for $t_2^*-t_1^*$, where the above choice of $D$ implies $\Phi^\prime\at{D}\geq p_*$.  \EMHD 
  
  \par
  \underline{\emph{Conclusion:}}
  The proof can now be completed by iteration.
\end{proof}

As an immediate consequence of Proposition \ref{pro:existence} we
obtain the following characterization of the dynamics of $p=\Phi'(u)$
which will be the starting point for our analysis of the
spinodal fluctuations in \S\ref{sect:fluctuations}.

\begin{corollary}[Dynamics of $p=\Phi'(u)$]
  \label{cor:structure-of-p}
  Let $u$ be a single-interface solution and denote by
  \begin{equation}
    \label{eq:def-chi}
    \chi_j(t) = 1 \quad\text{if }\; u_j(t) \in (-u_*,u_*),
    \qquad
    \chi_j(t) = 0 \quad\text{otherwise}
  \end{equation}
  the indicator of spinodal visits of $u_j$. Then
  $p_j=\Phi^\prime(u_j)$ satisfies
  \begin{equation}
    \label{eq:structure-of-p}
    \dot p_j(t)
    =
    \big( 1- \chi_j(t) \big) \laplace p_j(t) - \chi_j(t) \kappa \laplace p_j(t)
  \end{equation}
  for all $j\in\Zset$ and almost all $t>0$.
\end{corollary}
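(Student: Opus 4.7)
The plan is to apply the chain rule to $p_j = \Phi'(u_j)$ using the piecewise linear structure of $\Phi'$ given in \eqref{eq:phi_prime}, and then substitute the lattice equation $\dot u_j = \Delta p_j$. Specifically, I observe that $\Phi'$ is Lipschitz and piecewise $C^1$ with
\begin{equation*}
\Phi''(u) = 1 \quad \text{for } u \in \Rset \setminus [-u_*, u_*],
\qquad
\Phi''(u) = -\kappa \quad \text{for } u \in (-u_*, u_*),
\end{equation*}
and these two cases correspond precisely to $\chi_j = 0$ and $\chi_j = 1$ respectively. Since $u$ is a $C^1$ single-interface solution by Proposition \ref{pro:existence}, the composition $p_j = \Phi'(u_j)$ is Lipschitz in $t$, hence absolutely continuous and differentiable almost everywhere. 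At every time $t$ with $u_j(t) \notin \{-u_*, u_*\}$ the standard chain rule yields
\begin{equation*}
\dot p_j(t) = \Phi''\bat{u_j(t)} \, \dot u_j(t) = \Phi''\bat{u_j(t)} \, \Delta p_j(t),
\end{equation*}
which is exactly \eqref{eq:structure-of-p} after rewriting $\Phi''(u_j) = (1-\chi_j) - \kappa \chi_j$.

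It remains to argue that the exceptional set $E_j = \{t \geq 0 : u_j(t) \in \{-u_*, u_*\}\}$ is negligible for the claimed identity. For $j \neq k$ during the evolution in $X_k$, the definition of $X_k$ forces $u_j$ to stay strictly outside $\{\pm u_*\}$, so $E_j$ can only accumulate mass when $j$ is the active interface index. For the interface particle itself, Proposition \ref{pro:existence} ensures transversal crossings at the phase-transition times $t_k^*$ (since $\dot u_k(t_k^*) > 0$), so these contribute only countably many points. The remaining contribution to $E_j$ consists of times at which $u_j$ reaches $-u_*$ tangentially; however, on any such set of positive Lebesgue measure the absolutely continuous function $u_j$ has $\dot u_j = 0$ almost everywhere by the standard level-set lemma, and consequently $\Delta p_j = \dot u_j = 0$ while simultaneously $p_j \equiv p_*$ forces $\dot p_j = 0$. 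Thus both sides of \eqref{eq:structure-of-p} vanish a.e.\ on $E_j$, and the identity extends to almost all $t > 0$.

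There is no serious obstacle here: the only subtlety is to handle the non-differentiability of $\Phi'$ at $\pm u_*$, which I resolve by the transversality at exit times provided by Proposition \ref{pro:existence} together with the elementary fact that absolutely continuous functions have vanishing derivative almost everywhere on their level sets. Everything else is a direct substitution of $\dot u_j = \Delta p_j$ into the chain rule.
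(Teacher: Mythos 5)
Your proof is correct, and its overall structure mirrors the paper's: apply the chain rule at all $t$ with $u_j(t)\notin\{\pm u_*\}$, then argue the exceptional set is irrelevant. The treatment of the exceptional set differs in a small but genuine way. The paper splits $\{t: u_j(t)=-u_*\}$ into the set $T_j$ of transversal crossings ($\dot u_j\neq 0$), which it shows is a set of isolated (hence countable) points, and the tangential set $\mathcal{T}_j$ ($\dot u_j=0$), on which it establishes $\dot p_j(t)=0$ \emph{pointwise} via a short Taylor-expansion estimate. You instead invoke the level-set lemma for absolutely continuous functions once, which yields $\dot u_j=0$ and $\dot p_j=0$ a.e.\ on the whole level set without separating the transversal part. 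That is perfectly adequate for the stated ``for almost all $t$'' conclusion, and it is arguably the more standard route; the paper's pointwise argument is slightly more elementary and in fact shows the identity at every $t\in\mathcal{T}_j$, not just a.e., but this extra precision is not needed here.

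One small imprecision in your write-up: you describe the ``remaining contribution'' to the exceptional set as consisting of \emph{tangential} touches of $-u_*$. In fact the level set $\{t: u_j(t)=-u_*\}$ also contains transversal crossings (which are what the paper calls $T_j$). Your subsequent argument does not actually rely on tangentiality — the level-set lemma applies to the full level set and automatically renders the transversal-crossing subset null — so the conclusion is unaffected, but the phrasing should be adjusted to avoid suggesting you have already disposed of the transversal crossings. Also, when you write ``$p_j\equiv p_*$ forces $\dot p_j=0$,'' this should be backed by the same level-set lemma applied to the Lipschitz function $p_j$ (or by observing that $p_j$ is constant on a set of positive measure and differentiable a.e.), rather than stated as immediate.
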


\begin{proof}
  Equation \eqref{eq:structure-of-p} is true for times $t$
  where $u_j(t) \not\in \{ \pm u_* \}$, because $p_j$ is continuously
  differentiable in a neighborhood of such $t$ and we have $\dot
  p_j(t) = \Phi''(u_j(t)) \laplace p_j(t)$ \BMHC \BMHC with either 
$\Phi''(u_j(t))=1$ or $\Phi''(u_j(t))=-\ka$. \EMHC
  Moreover, \BMHC the set of times \EMHC $\{ t \colon u_j(t)=+ u_* \text{ for some } j \in
  \bbZ \}$ is by Proposition \ref{pro:existence} contained in the countable set $\{ t_k^* \colon k\in\Nset \}$ \BMHC  and thus not relevant for our discussion.  The same is true for each 
set $T_j:=\{t\colon u_j\at{t}=-u_*,\, \dot{u}_j\at{t}\neq0 \}$, which consists of isolated points
and is hence also countable (it can be covered by disjoint open intervals, each of which containing a different rational number).
 It remains to consider $\calT_j = \{ t
  \colon u_j(t)=-u_*, \,\dot{u}_j\at{t}=0 \}$ with fixed $j\in\Zset$. For any given $t\in\calT_j$ and all sufficiently small $\abs{h}>0$ we observe that \EMHC
  $u_j(t+h) = u_j(t) + \dot u_j(t)h + o(h) = -u_* + o(h)$  and find
  \begin{equation*}
    \left| p_j(t+h) - p_j(t) \right|
    =
    \left| \Phi'(-u_* + o(h)) - \Phi'(-u_*) \right|
    \leq
    \max(1,\kappa) o(h).
  \end{equation*}
  \BMHC This estimate implies  $\dot{p}_j(t)=0$, and combining this with
$\laplace
  p_j(t)=\dot u_j(t)=0$ we conclude that \eqref{eq:structure-of-p} is satisfied for all times in $\calT_j$.
\EMHC
\end{proof}

\subsection{Lower bound for the waiting time}

Proposition \ref{pro:existence} reveals the following dynamical
properties for single-interface data: 
\begin{enumerate}
\item at any time $t$ there is at most one particle inside the
  spinodal region, and
\item the particles undergo their phase transition one after the other
  in the sense that $u_{k+1}$ can enter the spinodal region only when
  $u_k$ has completed its phase transition.
\end{enumerate} 
Our next goal is to show that the spinodal visits of \BMHD neighboring particles
are suitably separated. To this end we introduce the following times and refer to Figure
\ref{fig:times} for an illustration. \EMHD

\begin{figure}
  \centering
  \includegraphics[width=.85\textwidth]{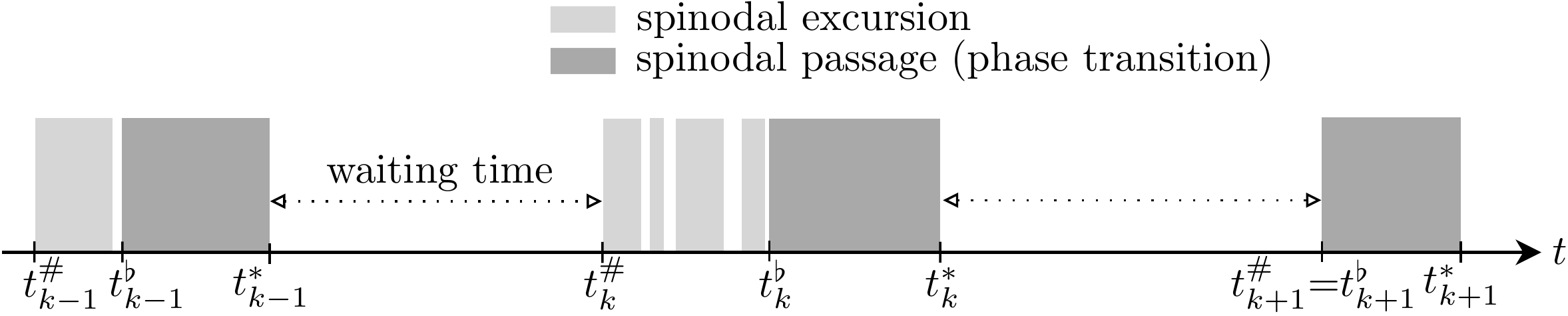}
  \caption{Schematic representation of the times from Notation
    \ref{not:spinodal-entrance-times}. We control neither the number
    nor the duration of spinodal excursions but estimate their
    cumulative impact in Corollary \ref{cor:bounds-neg-fluct}.}
  \label{fig:times}
\end{figure}

\begin{notation}[Spinodal entrance times, excursions and passage]
  \label{not:spinodal-entrance-times}
  Let $u$ be a single-interface solution as in Proposition
  \ref{pro:existence}.  For $k\geq 1$ we denote by
  \begin{equation}
	\label{Eqn:DefSpinodalTimes}
    t_k^{\#}
    :=
    \inf \set[t > t_{k-1}^*]{ u_k(t)>-u_* }
    \qquad\text{and}\qquad
    t_k^{\flat}
    :=
    \inf \set[t \geq t_{k}^\#]{ u_k(s)>-u_* \text{ for all } s>t }
  \end{equation}
  the \emph{first} and the \emph{final spinodal entrance time of
    $u_k$}, respectively. Moreover, we refer to spinodal visits of
  $u_k$ that occur in $(t_k^\#,t_k^\flat)$ as \emph{spinodal
    excursions} and to the spinodal visit in $(t_k^\flat,t_k^*)$ as
  \emph{spinodal passage}.
\end{notation}

\BMHD The quantity $t_{k+1}^\#-t_k^*$ is a lower bound for the difference $t_{k+1}^*-t_k^*$ between consecutive
phase transition times and implies  an upper bound for the microscopic
interface speed.  \EMHD In the proof of Proposition \ref{pro:existence}, see
\eqref{eq:bounds-on-u.PEqn1}, we have shown that $t_{k+1}^\#-t_k^*\geq
C$ for some constant $C$, but this bound is not sufficient for passing
to the macroscopic limit as it scales like $1/\eps^2$ under the
parabolic scaling \eqref{Eqn:Scaling}.  
In the next lemma, we
therefore derive an improved estimate for the difference
$t_{k+1}^\#-t_k^*$ by \BMHD means of problem-tailored comparison
principles as sketched in  Figure \ref{fig:waiting_lemma}. \EMHD To this end,
we note that Proposition \ref{pro:existence} combined with
\eqref{Eqn:Parameters} implies \BMHD for any $k\geq1$ the estimates \EMHD
\begin{align}
  \label{Eqn:Identities1}
  -p_*
  \leq
  p_k\at{t}\leq p_*
  \quad
  \text{for } 0\leq t \leq t^*_k,
  \qquad
  -p_*\leq p_k\at{t}<\infty
  \quad
  \text{for } t\geq{t^*_k}
\end{align}
as well as
\begin{align}
  \label{Eqn:Identities2}
  p_k\at{t^*_k} = -p_*,
  \qquad
  p_{k}\nat{t^\#_{k+1}}  \BMHD >+ p_* = p_{k+1}\nat{t^\#_{k+1}}.\EMHD
\end{align}
Moreover, we denote by $g$ the discrete heat kernel, which solves
\begin{align}
  \label{Eqn:HeatKernel}
  \dot{g}_j = \Delta g_j,
  \qquad g_j\at{0}=\delta_j^0
\end{align}
with Kronecker delta $\delta^{0}_j$ \BMHD and discrete Laplacian $\Delta$ as in \eqref{Def:Laplacian}. Notice that \EMHD $g$ can be computed
explicitly by discrete Fourier transform, see for instance
\cite[Appendix]{HeHe13}.

\begin{figure}
  \centering
  \includegraphics[width=.85\textwidth]{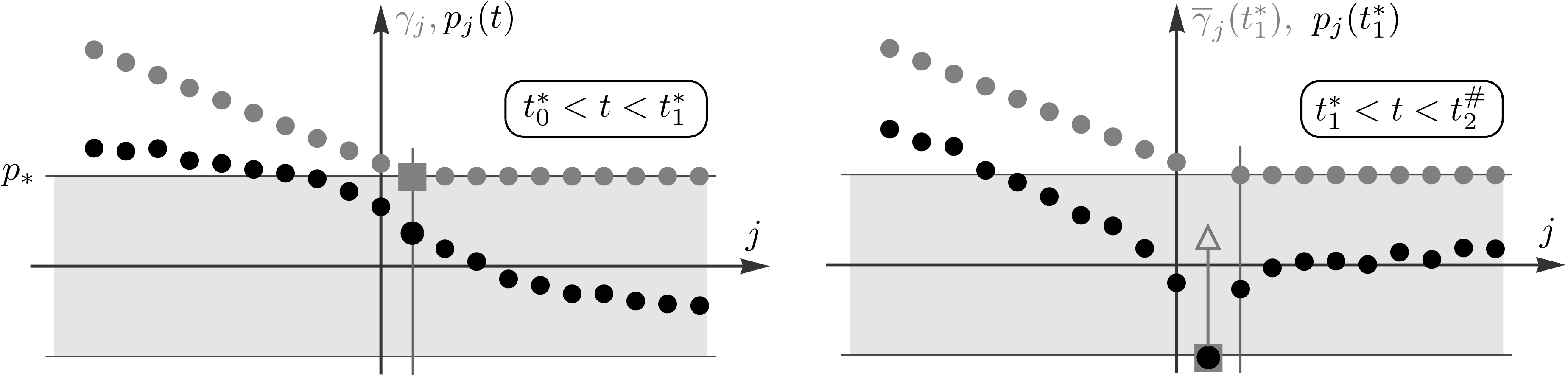}
  \caption{Illustration of Lemma \ref{lem:waiting} which provides a
    majorant for $p$ and bounds the waiting time. \emph{Left
      panel}. Cartoon of $p\at{t}$ (black) and the stationary,
     kink-type supersolution $\ga$ (gray) for $k=1$ and times
    $t\in[t^*_0,t^*_1]$. At the phase transition time
    $t^*_1$, both the interface (vertical line) and $\ga$ are shifted
    to the right by one lattice position.  \emph{Right panel}. Cartoon
    of $p\at{t^*_1}$ and $\bar{\ga}\at{t^*_1}$ for $k=1$, where the
    time-dependent  supersolution $\bar\ga$ is used to estimate
    $t^\#_2-t^*_1$ from below. Notice that the phase interface has
    already been shifted to $j=2$ and that Proposition
    \ref{pro:existence} yields the two key conditions
    $p_1\nat{t^*_1}=-p_*$ and $p_1\nat{t^\#_2}>+p_*$.}
  \label{fig:waiting_lemma}
\end{figure}

\begin{lemma}[Waiting Lemma]
  \label{lem:waiting}
  Suppose there exists $b>0$ such that the single-interface initial
  data $u\at{0}\in X_1$ satisfy
  \begin{align*}
    p_j(0) \leq \gamma_j := p_*+b \max\big\{1-j,0\}
    \qquad
    \text{for all } j \in \bbZ.
  \end{align*}
  Then the solution $u$ from Proposition \ref{pro:existence} satisfies
  \begin{equation}
    \label{lem:waiting.Eqn1}
    p_j(t) \leq \gamma_{j-k+1}
    \qquad
    \text{for } j \in \bbZ \text{ and } t \in [t_{k-1}^*,t_{k}^*)
  \end{equation}
  as well as
  \begin{equation}
    \label{lem:waiting.Eqn2}
    t_{k+1}^* - t_k^*
    \geq
    t_{k+1}^{\#} - t_k^*
    \geq
    \frac{c_* p_*}{b}
  \end{equation}
  for all $k \geq 1$. Here, the universal constant $c_*$ is determined
  by the discrete heat kernel, and \eqref{lem:waiting.Eqn2} makes sense
  for $t^*_k<\infty$ only.
\end{lemma}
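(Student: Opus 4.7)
The plan is to prove both estimates by induction on $k$, exploiting the fact (provided by Proposition \ref{pro:existence} and Corollary \ref{cor:structure-of-p}) that on $[t^*_{k-1}, t^*_k)$ only the particle $u_k$ can be inside the spinodal region, so $\dot p_j = \Delta p_j$ for every $j \neq k$, while $p_k(t) \leq p_*$ is automatic from \eqref{Eqn:Identities1}. The shifted kink $\gamma_{\cdot - k + 1}$ is piecewise affine and hence discrete-harmonic everywhere except at its single corner at $j = k$, where $\Delta \gamma_1 = b > 0$.

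For \eqref{lem:waiting.Eqn1}, the base case $k=1$ follows from the discrete maximum principle applied to $w_j(t) := p_j(t) - \gamma_j$: one has $w_j(0) \leq 0$ by hypothesis, $w_1(t) \leq 0$ from the interface constraint, and $\dot w_j = \Delta w_j$ for every $j \neq 1$, whence $w_j(t) \leq 0$ throughout $[0, t^*_1)$ (equivalently, by a Duhamel representation with the nonnegative discrete heat kernel). The induction step is free of charge: since $\gamma$ is non-increasing one has $\gamma_{j-k+1} \leq \gamma_{j-k}$, so passing from the interval $[t^*_{k-1}, t^*_k)$ to $[t^*_k, t^*_{k+1})$ only improves the initial datum for the next comparison, and the argument repeats verbatim with the interface shifted from $k$ to $k+1$.

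For the waiting-time bound \eqref{lem:waiting.Eqn2}, I restrict to $t \in [t^*_k, t^{\#}_{k+1}]$, an interval on which no particle is spinodal (the predecessor $u_k$ sits in $\Theta_+$ after its transition and $u_{k+1}$ is still in $\Theta_-$). Therefore $\dot p_j = \Delta p_j$ holds \emph{globally} and the explicit representation $p_j(t) = \sum_i g_{j-i}(t - t^*_k)\, p_i(t^*_k)$ applies. The decisive observation is that $p_k(t^*_k) = -p_*$ exactly, which is a strict improvement by $2 p_*$ over the majorant value $\gamma_1 = p_*$; isolating the $i = k$ contribution and dominating the remaining sum by the shifted kink yields
\begin{equation*}
  p_k(t)
  \leq
  p_* + b \sum_{m \geq 1} m\, g_m(t - t^*_k) - 2 p_* g_0(t - t^*_k).
\end{equation*}
Combining this with the entrance condition $p_k(t^{\#}_{k+1}) > p_*$, which follows by applying Proposition \ref{pro:existence} to the next-to-enter particle $u_{k+1}$, reduces the claim to the universal heat-kernel inequality $b F(\Delta t) > 2 p_*$, where $\Delta t := t^{\#}_{k+1} - t^*_k$ and $F(\tau) := g_0(\tau)^{-1} \sum_{m \geq 1} m\, g_m(\tau)$.

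The main technical obstacle is to convert this inequality into the quantitative bound $\Delta t \geq c_* p_*/b$. For this it suffices to verify that $F(\tau) \leq C \tau$ on $[0,\infty)$ with some universal constant $C$, since one can then take $c_* := 2/C$. Using the Bessel-function identity $g_m(\tau) = \e^{-2\tau} I_{|m|}(2\tau)$ one checks $F(\tau) = \tau + O(\tau^2)$ as $\tau \to 0^+$ and $F(\tau) \sim 2\tau$ as $\tau \to \infty$, so such a finite $C$ exists. This step is a purely linear statement about the discrete heat kernel and is completely decoupled from the non-monotonicity of $\Phi'$.
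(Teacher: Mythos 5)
Your proposal is correct and follows essentially the same route as the paper: part one is the same kink-majorant comparison argument (the paper's contradiction/sup formulation is just a rigorous version of your maximum principle at the constrained site $j=k$), and part two is the paper's supersolution $\bar\gamma$ written through the Duhamel representation of $p$ itself, since by summation by parts $\sum_{m\geq1} m\,g_m(\tau)=\int_0^\tau g_0(s)\,\dint{s}$, so your inequality $b\,F(\Delta t)>2p_*$ with $F(\tau)\leq C\tau$ is literally the paper's condition $\bar\gamma_1(\bar t_1)=p_*$ combined with $g_0(s)\sim(1+s)^{-1/2}$. The only cosmetic caveat is that the parenthetical ``Duhamel representation'' shortcut in part one is not directly available because the heat equation fails at $j=k$; your primary maximum-principle argument (which uses $p_k\leq p_*$ there) is the right one and matches the paper.
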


\begin{proof}
  \underline{\emph{Supersolution for $p$ in $[t_{k-1}^*,t_{k}^*]$:}}
  We start with $k=1$ and suppose for contradiction that there exists
  a finite time $\tilde{t}_1\in(t^*_0,t^*_1]$ such that
  \begin{align*}
    0
    <
    \tilde{C}
    :=
    \sup_{t\in[t^*_0,\tilde{t}_1]}  \sup_{j\in\Zset}c_j\at{t},
    \qquad
    c_j\at{t}
    :=
    p_j \at{t}-\ga_j,
  \end{align*}
  where $\tilde{C}\in\Rset$ is well-defined \BMHD due to \EMHD \eqref{eq:bounds-on-u}
  and $t^*_0=0$ holds by definition. By \eqref{Eqn:Identities1} we
  have
  \begin{align}
    \label{lem:waiting.PEqn1}
    c_1\at{t}\leq\BMHD  p_*-\ga_1=\EMHD 0
    \qquad \text{for }
    t^*_0 \leq t \leq \tilde{t}_1,
  \end{align}
  \BMHD while for $j\neq 1$ our definitions imply
\begin{align*}
\dot{c}_j = \dot{p}_j = \Delta p_j = \Delta c_j =c_{j+1}+c_{j-1}-2c_j\leq
  2\nat{\tilde{C}-c_j}
\end{align*}
  thanks to Corollary \ref{cor:structure-of-p}. \EMHD
  Therefore, and due to the initial condition $c_j\at{t^*_0}\leq 0$,
  the comparison principle for ODEs guarantees that
  \begin{align}
    \label{lem:waiting.PEqn2}
    {c}_j\at{t}\leq \tilde{C}\bat{1-\e^{-2t}}
    \qquad
    \text{for } j\neq1 \text{ and } t^*_0\leq t\leq \tilde{t}_1.
  \end{align}
  The combination of \eqref{lem:waiting.PEqn1} and
  \eqref{lem:waiting.PEqn2} finally yields
  $0<\tilde{C}\leq \tilde{C}\bat{1-\e^{-2\tilde{t}_1}}<\tilde{C}$ and
  hence the desired contradiction. In particular, we established the
  claim \eqref{lem:waiting.Eqn1} for $k=1$, and since this implies
  $p_j\at{t^*_1}\leq \ga_j\leq \ga_{j-1}$ we can proceed iteratively.

  \underline{\emph{Estimate for $t_{k+1}^{\#}-t_{k}^*$ :}}
  Due to the shift invariance it suffices again to study the case
  $k=1$. As illustrated in Figure \ref{fig:waiting_lemma}, we
  introduce $\bar \gamma$ as the solution to the initial value problem
  \begin{align*}
    \dot{\bar \gamma}_j\at{t}
    =
    \laplace \bar \gamma_j\at{t},\qquad \bar{\gamma}_j(t^*_1) = \gamma_j - 2 p_*
    \delta^{1}_j\qquad \text{for } j\in\Zset \text{ and } t\geq t^*_1,
  \end{align*}
  and using the discrete heat kernel $g$ from \eqref{Eqn:HeatKernel}
  we write \BMHD its explicit solution as \EMHD
  \begin{equation*}
    \bar \gamma_j(t)
    =
    \sum_{n \in \bbZ} g_{j-n}(t-t^*_1) \bar \gamma_n(t^*_1)
    =
   -
    2 p_* g_{j-1}(t-t^*_1)+\sum_{n \in \bbZ} g_{n}(t-t^*_1) \gamma_{j-n}.
  \end{equation*}
  By differentiation of $\bar{\ga}_1$ and recalling that
  \begin{align*}
    \sum_{n}\dot{g}_{n}\at{s}\ga_{1-n}
    =
    \sum_{n}\Delta{g}_{n}\at{s}\ga_{1-n}
    =
    \sum_{n}{g}_{n}\at{s}\Delta \ga_{1-n}
    \BMHC= b\sum_{n}{g}_{n}\at{s}\delta^0_{n}=\EMHC b g_{0}\at{s},
  \end{align*}
  we find
  $\dot{\bar \gamma}_1\at{t} = - 2 p_*\dot{g}_0\at{t-t^*_1}+b
  g_0\at{t-t^*_1}$, \BMHD which 
yields
  \begin{align*}
    \bar{\ga}_1\at{t}
    =
    p_*-2p_* g_0\at{t-t^*_1}+b\int\limits_0^{t-t^*_1}g_0\at{s} \,\dint{s}
  \end{align*}
 by integration and due to the initial conditions $\bar{\ga}_1\at{t^*_1}=-p_*$, $g_0\at{0}=1$. \EMHD Since $g_0$ is positive and decreasing we conclude the existence of
  a unique time $\bar{t}_1>t^*_1$ such that
  \begin{align}
    \label{lem:waiting.PEqn4}
    \bar\ga_1\at{\bar{t}_1}=p_*
    \qquad \text{and}\qquad
    \bar\ga_1\at{t}< p_*
    \qquad
    \text{for all } t\in[t^*_1,\bar{t}_1],
  \end{align}
  and exploiting $g_0\at{s}\sim \at{1+s}^{-1/2}$ we justify that
  \begin{align}
    \label{lem:waiting.PEqn5}
    \bar{t}_1-t^*_1
    \geq
    \frac{c_*p_*}{b}
  \end{align}
  holds for some universal constant $c_*>0$. Moreover, $p$ solves the
  discrete heat equation for $t\in[t^*_1,t^\#_2]$, where
  we have
  \begin{align*}
    p_j\at{t^*_1} \leq \ol{\ga}_j\at{t^*_1}
    \qquad
    \text{for all } j\in\Zset
  \end{align*}
  \BMHD according to \EMHC  \eqref{lem:waiting.Eqn1} and since $p_1\at{t^*_1}=-p_*$ holds
  by \eqref{Eqn:Identities2}. A standard comparison principle
  therefore yields
  \begin{equation*}
    p_j(t) \leq \bar \gamma_j(t)
    \quad \text{for all } j\in\Zset \text{ and } t\in[t^*_1,t^\#_2],
  \end{equation*}
  and in combination with \eqref{lem:waiting.PEqn4} we obtain
  $t^\#_2>\bar{t}_1$ since \eqref{Eqn:Identities2} also guarantees
  that $p_1\nat{t^\#_2}\geq p^*$. The desired estimate
  \eqref{lem:waiting.Eqn2} now follows from \eqref{lem:waiting.PEqn5}.
\end{proof}

\subsection{Family of entropy inequalities}

We finally establish the discrete analog to the weak formulation of
the entropy relation \eqref{Eqn:EntropyLaw} as well as the local
variant of the energy-dissipation relation.

\begin{proposition}[Entropy balance and energy dissipation]
  \label{Prop:Entropy}
  Let $\psi\in\ell^1\at{\Zset}$ be an arbitrary \BMHD but \EMHD nonnegative test
  function, $t\geq0$ a given time, and $u$ be a solution to
  \eqref{eq:master-eq}. Then we have
  \begin{align}
    \label{Prop:Entropy.Eqn1}
    \deriv{t}\sum_{j\in\Zset} \eta\bat{u_j\at{t}} \psi_j\leq -\sum_{j \in
      \bbZ} \mu\bat{p_j\at{t}}\bat{\nabla_+ \psi_j}\bat{ \nabla_+ p_j\at{t}}
  \end{align}
  for any smooth entropy pair $(\eta,\mu)$ satisfying
  \eqref{Eqn:EntropyPair} as well as
  \begin{align}
    \label{Prop:Entropy.Eqn2}
    \sum_{j\in\Zset} \int\limits_0^{t}\bat{\nabla_+p_j\at{s}}^2 \psi_j \,\dint{s}
    \leq
    \sum_{j\in\Zset}\Phi\bat{u_j\at{0}}
    -
    \sum_{j\in\Zset}\int\limits_0^{t} p_j\at{s} \bat{\nabla_+\psi_j}
    \bat{\nabla_+p_j\at{s}} \,\dint{s}
\end{align}
with \BMHD energy \EMHD $\Phi$ as in \eqref{Eqn:DoubleWell}.
\end{proposition}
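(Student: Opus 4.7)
The plan is to derive both inequalities from one computation: differentiate $\sum_j \eta\bat{u_j(t)}\psi_j$ in time, rewrite the result via \eqref{eq:master-eq} and \eqref{Eqn:EntropyPair}, and then apply a discrete summation by parts. For \eqref{Prop:Entropy.Eqn1} the resulting identity carries a non-positive remainder that is simply discarded, while for \eqref{Prop:Entropy.Eqn2} I specialize to $\mu=\mathrm{id}$, $\eta=\Phi$, integrate in time, and drop the non-negative terminal energy.

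Thanks to the $\ell^\infty$-bounds on $u$, $p$, and $\Delta p$ guaranteed by Proposition \ref{pro:existence} together with $\psi\in\ell^1\at{\Zset}$, dominated convergence justifies interchanging $\deriv{t}$ and $\sum_j$; combining this with $\eta'(u_j)=\mu\bat{\Phi'(u_j)}=\mu(p_j)$ (which holds pointwise since $\Phi'$ is continuous) and $\dot u_j=\Delta p_j=\nabla_-\nabla_+ p_j$ yields
\begin{align*}
\deriv{t}\sum_{j\in\Zset}\eta\bat{u_j}\psi_j
=
\sum_{j\in\Zset}\mu(p_j)\,\psi_j\,\Delta p_j
=
-\sum_{j\in\Zset}\nabla_+\bat{\mu(p_j)\psi_j}\,\nabla_+ p_j,
\end{align*}
where the summation by parts is legitimate because all factors are uniformly bounded and $\psi\in\ell^1$ provides the vanishing boundary terms at infinity. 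Using the asymmetric discrete Leibniz rule $\nabla_+\bat{f_jg_j}=f_j\nabla_+g_j+\bat{\nabla_+ f_j}g_{j+1}$ decomposes the right-hand side into
\begin{align*}
-\sum_{j\in\Zset}\mu(p_j)\bat{\nabla_+\psi_j}\bat{\nabla_+ p_j}
\;-\;
\sum_{j\in\Zset}\bat{\nabla_+\mu(p_j)}\bat{\nabla_+ p_j}\,\psi_{j+1},
\end{align*}
and the second sum is \emph{non-negative} because $\mu'\geq0$ forces $\nabla_+\mu(p_j)$ and $\nabla_+ p_j$ to carry a common sign while $\psi_{j+1}\geq0$. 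Discarding it produces \eqref{Prop:Entropy.Eqn1}.

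For \eqref{Prop:Entropy.Eqn2} I specialize to $\mu(p)=p$, $\eta=\Phi$, so that the term discarded above becomes the true dissipation $\at{\nabla_+ p_j}^2\psi_{j+1}$ and the computation turns into the equality
\begin{align*}
\deriv{t}\sum_{j\in\Zset}\Phi\bat{u_j}\psi_j
+
\sum_{j\in\Zset}\at{\nabla_+ p_j}^2\psi_{j+1}
=
-\sum_{j\in\Zset}p_j\bat{\nabla_+\psi_j}\bat{\nabla_+ p_j}.
\end{align*}
Integrating over $[0,t]$ and invoking $\Phi,\psi\geq0$ to drop $\sum_j\Phi\bat{u_j(t)}\psi_j\geq0$ on the left delivers \eqref{Prop:Entropy.Eqn2}, up to a cosmetic index shift $j+1\mapsto j$ in the dissipation sum (or, equivalently, after using the symmetrized variant $\nabla_+\bat{f_jg_j}=\tfrac12\bat{f_j+f_{j+1}}\nabla_+g_j+\tfrac12\bat{g_j+g_{j+1}}\nabla_+f_j$ throughout). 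The main technical delicacy is the limited smoothness of $\eta$ and $\Phi$ at $u=\pm u_*$ coming from the kinks of $\Phi'$: although $\eta'=\mu\circ\Phi'$ is only Lipschitz, the only regularity required is that $t\mapsto\eta\bat{u_j(t)}$ be $C^1$ with classical derivative $\eta'(u_j)\dot u_j$, and this follows from $u_j\in C^1$ via the chain rule for $C^1$-compositions. I do not anticipate any further obstacle beyond careful bookkeeping of the shifted indices in the summation by parts.
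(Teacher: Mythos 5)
Your proof is correct and follows essentially the same route as the paper: differentiate $\sum_j\eta(u_j)\psi_j$, use $\eta'(u_j)=\mu(p_j)$ and $\dot u_j=\nabla_-\nabla_+p_j$, sum by parts once, split via the asymmetric discrete product rule, and drop the dissipative term $\sum_j\psi_{j+1}\bat{\nabla_+\mu(p_j)}\bat{\nabla_+p_j}\geq 0$; then specialize $(\eta,\mu)=(\Phi,\id)$ and integrate in time. One remark on the loose end you flag at the end: the index shift $j+1\mapsto j$ is not literally cosmetic, since it would turn $(\nabla_+p_j)^2\psi_{j+1}$ into $(\nabla_-p_j)^2\psi_j$ rather than $(\nabla_+p_j)^2\psi_j$, and the symmetrized Leibniz rule yields the weight $\tfrac12(\psi_j+\psi_{j+1})$, not $\psi_j$. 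This is actually a small imprecision in the statement of \eqref{Prop:Entropy.Eqn2} as written in the paper (which also omits $\psi_j$ from the energy term $\sum_j\Phi(u_j(0))$); the inequality your computation actually produces, with $\psi_{j+1}$ in the dissipation and $\psi_j$ multiplying $\Phi(u_j(0))$, is the correct one and is what is used downstream. Your added care about interchanging $\deriv{t}$ with $\sum_j$, the decay of boundary terms from $\psi\in\ell^1$, and the $C^1$-regularity of $t\mapsto\Phi(u_j(t))$ despite the kinks of $\Phi'$ are all welcome refinements of points the paper passes over silently, but they do not change the argument.
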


\begin{proof}
  Since \eqref{Eqn:EntropyPair} ensures $\deriv{t} \eta\at{u_j} =
  \eta^\prime\at{u_j}\dot{u}_j= \mu\at{p_j}\laplace{p}_j$, we compute
  \begin{align}
  \label{Prop:Entropy.PEqn1}
  \begin{split}
    \deriv{t} \sum_{j \in \bbZ} \eta\at{u_j} \psi_j
    &=
    \sum_{j \in \bbZ} \psi_j \mu\at{p_j}\nabla_-\nabla_+{p}_j
    = - \sum_{j \in \bbZ} \nabla_+\bat{\psi_j \mu\at{p_j}} \nabla_+ p_j
    \\
    &= -\sum_{j \in \bbZ} \mu\at{p_j}\nabla_+ \psi_j \nabla_+ p_j -
    \sum_{j \in \bbZ}\psi_{j+1} \nabla_+ \mu\at{p_j} \nabla_+ p_j\,,
  \end{split}
  \end{align}
  where we used discrete integration by parts as well as 
 \BMHD the product \EMHD rule $\at{a_{j+1}b_{j+1}-a_{j}b_{j}} =
  b_j\at{a_{j+1}-a_{j}}+a_{j+1}\at{b_{j+1}-b_{j}}$. The monotonicity
  of $\mu$ \BMHD implies \EMHD 
  \begin{align*}
    \nabla_+ \mu\at{p_j} \nabla_+ p_j
    =
    \bat{\mu\at{p_{j+1}}-\mu\at{p_j}}\at{p_{j+1}-p_{j}}
    \geq
    0\,,
  \end{align*}
  so \eqref{Prop:Entropy.Eqn1} follows immediately thanks to
the nonnegativity of $\psi$.  Moreover,
  choosing $(\eta,\mu) = (\Phi,\mathrm{id})$ and integrating
  \eqref{Prop:Entropy.PEqn1} in time we obtain
  \eqref{Prop:Entropy.Eqn2} after rearranging terms and due to
  $\Phi\at{u_j\at{t}}\geq0$.
\end{proof}

\section{Analysis of the spinodal fluctuations}
\label{sect:fluctuations}

As already discussed in \S\ref{sect:intro}, the analysis of the
fluctuations is the very core of the convergence problem and so far we
are only able to deal with trilinear nonlinearities $\Phi^\prime$
because for those we can decompose the nonlinear dynamics into linear
subproblems and combine all partial results by the superposition
principle. We also recall that the case $\ka\in(0,\infty)$
is more involved than the bilinear limit $\ka=\infty$ without spinodal
excursions and \BMHD with \EMHD degenerate spinodal passages.

The asymptotic arguments below strongly rely on the regularity of
the microscopic initial data. To keep the presentation as simple as
possible we make from now on the following standing assumption, which
guarantees that the initial data are well-prepared.

\begin{assumption}[Macroscopic single-interface initial data]
  \label{ass:macro}
  The initial data $u(0)$ belong to $X_1$ and there exist constants
  $\al$, $\beta > 0$ such that
  $p\at{0}=\Phi'(u(0))=u\at{0}-\sgn{u\at0}$ satisfies
  \begin{equation*}
    \sup_{j \in \bbZ} |  p_j(0) | \leq \alpha,
    \qquad 
    \sup_{j \in \bbZ} | \nabla_+ p_j(0) | \leq \alpha \eps,
    \qquad 
    \sup_{j \in \bbZ\setminus\{1\}} | \laplace p_j(0) | \leq \alpha \eps^2
  \end{equation*}
  as well as
  \begin{equation*}
    \abs{\laplace p_1(0)} \leq \beta \eps,
    \qquad \qquad
    p_j\at{0} \leq p_*+\eps\beta \max\big\{0,1-j\big\}
    \quad \text{for all } j\in\Zset
  \end{equation*}
  for $\eps>0$.  Moreover, for convenience we assume that
  $u_1(0) \not\in (-u_*,+u_*)$.
\end{assumption}

\begin{figure}[ht!]
  \centering
  \includegraphics[width=0.9\textwidth]{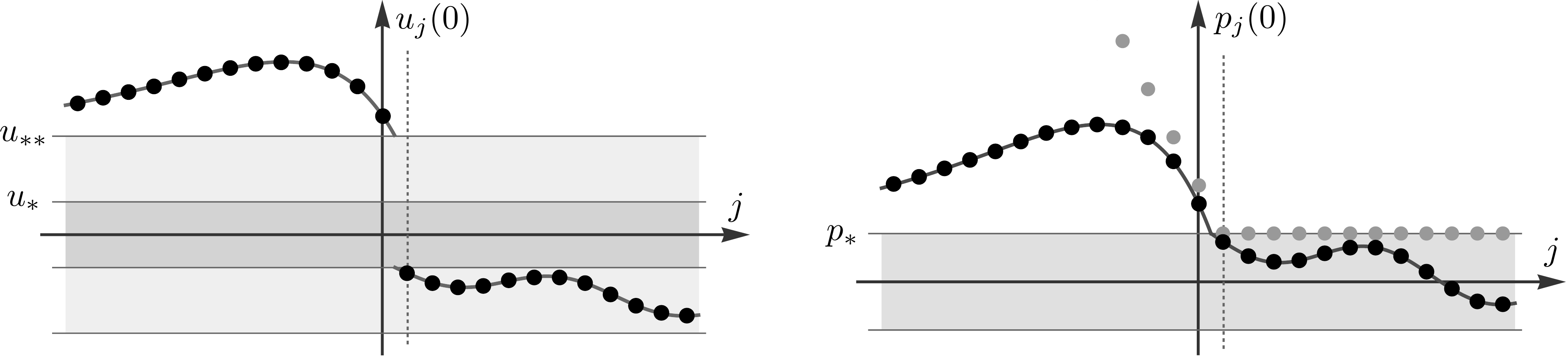}
  \caption{Typical initial data (black dots) as in Assumption
    \ref{ass:macro} which sample macroscopic functions $U_\ini$ and
    $P_\ini=U_\ini-\sgn U_\ini$ (gray curves) that are compatible with
    the limit model from Main Result \ref{res:Main}. The gray dots
    represent the kink-type majorant for $p\at{0}$ which enables us to
    bound all microscopic waiting times from below and hence the
    macroscopic interface speed from above, see Lemma
    \ref{lem:waiting} and Corollary \ref{cor:time-and-number-bounds}.}
  \label{Fig:InitialData}
\end{figure}

Assumption \ref{ass:macro} is motivated by the limit \BMHD dynamics, see Figure \ref{Fig:InitialData} for an illustration, and \EMHD the
prototypical example \BMHD from \EMHD \S\ref{sect:mainresult}
corresponds to $\Xi_\ini=0$ and 
\begin{align*}
  \alpha
  =
  \sup_{\xi\in\Rset}\at{\abs{P_\ini\at{\xi}}
  +
  \babs{P_\ini^\prime\at{\xi}}+\babs{P_\ini^{\prime\prime}\at{\xi}}},
  \qquad
  \beta
  =
  \max\Big\{
  \babs{ \jump{\partial_\xi P_\ini\bat{\Xi_\ini}} },
  \sup_{\xi<\Xi_\ini}\bat{\at{P_\ini\at{\xi}-p_*}/\xi}\Big\}.
\end{align*}
An
important consequence of Assumption \ref{ass:macro} and Lemma
\ref{lem:waiting} are the following bounds for the microscopic waiting
time and the number of microscopic phase transitions.

\begin{corollary}[Waiting Lemma for macroscopic single-interface
  initial data]
  \label{cor:time-and-number-bounds}
  The microscopic single-interface solution from Proposition
  \ref{pro:existence} satisfies
  \begin{equation*}
    t_{k+1}^* - t_k^*
    \geq
    t_{k+1}^\# - t_k^*
    \geq
    \frac{2 d_*}{\eps}
  \end{equation*}
  for all $k\geq1$ with $t_k^*<\infty$ and some constant $d_*>0$,
  which depends only on the potential parameter $\kappa$ and on the
  initial data via the parameters $\al$, $\be$.  In particular, for
  any macroscopic final time $\tau_\fin>0$ we have
  \begin{equation*}
    K_\eps
    :=
    \max \{k \geq 1: t_k^* \leq \tau_\fin/\eps^2 \}
    \leq
    \frac{\tau_\fin}{2d_*\eps},
  \end{equation*}
  where $K_\eps$ \BMHD abbreviates the number of \EMHC phase transitions in the corresponding
  microscopic time interval $[0,t_\fin]$ with
  $t_\fin := \tau_\fin/\eps^2$.
\end{corollary}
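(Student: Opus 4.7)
The plan is to reduce the corollary directly to Lemma \ref{lem:waiting} with the scale parameter chosen as $b=\eps\beta$, and then to obtain the bound on $K_\eps$ by a simple telescoping argument.

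First, I would observe that Assumption \ref{ass:macro} provides exactly the kink-type majorant required by the Waiting Lemma. Indeed, the bound $p_j(0) \leq p_* + \eps\beta\max\{0,\,1-j\}$ from Assumption \ref{ass:macro} is precisely the hypothesis of Lemma \ref{lem:waiting} with $b=\eps\beta$. The remaining parts of Assumption \ref{ass:macro} (uniform $\alpha$-smallness of $p(0)$, $\nabla_+ p(0)$, and $\laplace p_j(0)$ for $j\neq 1$) play no role for this corollary and will be used only later to control the fluctuations.

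Next, Lemma \ref{lem:waiting} can be applied verbatim, yielding
\begin{equation*}
    t_{k+1}^* - t_k^* \;\geq\; t_{k+1}^\# - t_k^* \;\geq\; \frac{c_* p_*}{\eps\beta}
\end{equation*}
for every $k\geq 1$ with $t_k^*<\infty$. Setting $d_* := c_* p_*/(2\beta)$ then produces the claimed lower bound $2d_*/\eps$. The constant $d_*$ depends on $\kappa$ only through $p_* = \kappa/(1+\kappa)$ and on the initial data through $\beta$, while $c_*$ remains the universal constant arising from the discrete heat kernel in Lemma \ref{lem:waiting}.

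Finally, I would obtain the bound on $K_\eps$ by telescoping. Writing $t_{K_\eps}^* = t_1^* + \sum_{k=1}^{K_\eps - 1}(t_{k+1}^* - t_k^*)$ and using the step-wise lower bound from the previous paragraph gives $t_{K_\eps}^* \geq (K_\eps - 1)\cdot 2d_*/\eps$. Imposing $t_{K_\eps}^* \leq t_\fin = \tau_\fin/\eps^2$ therefore forces $K_\eps - 1 \leq \tau_\fin/(2 d_* \eps)$, and absorbing the harmless additive constant into $d_*$ (or restricting to small enough $\eps$) yields the stated inequality. There is no real obstacle at this stage: the substantive comparison-principle argument is already contained in Lemma \ref{lem:waiting}, and the corollary is simply its quantitative rephrasing under the macroscopic scaling of Assumption \ref{ass:macro}.
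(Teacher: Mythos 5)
Your proposal is correct and coincides with the paper's intended argument: Corollary \ref{cor:time-and-number-bounds} is stated without separate proof precisely as the combination of Assumption \ref{ass:macro} (which supplies the kink majorant with $b=\beta\eps$) and Lemma \ref{lem:waiting}, giving $2d_*=c_*p_*/\beta$, followed by the counting of transitions. Your remark on the telescoping wrinkle is also apt, since $t_1^*$ itself is not bounded below by $2d_*/\eps$, so the additive $+1$ must indeed be absorbed by slightly shrinking $d_*$ for sufficiently small $\eps$, which is consistent with the paper's conventions on constants.
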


\begin{notation}[Generic constants and parameter dependence]
  In the following, we always suppose that $0<\tau_\fin<\infty$ is
  fixed and denote by $C$ a generic constant that depends on $\ka$,
  $\al$, $\beta$, and $\tau_\fin$ but not on $\eps>0$.
\end{notation}

\subsection{Prototypical spinodal problem}
\label{sec:prot-phase-trans}

Equation \eqref{eq:structure-of-p} reveals that during a spinodal
visit of some $u_k$ the corresponding $p_k=\Phi^\prime\at{u_k}$
satisfies $\dot p_k = - \kappa \laplace p_k$ while all other $p_j$
adhere to forward diffusion $\dot p_j = \laplace p_j$. For this
reason, we first consider a prototypical spinodal problem
\begin{equation}
  \label{Eqn:ToyProblem}
  \dot z_j(t)
  =
  \begin{cases}
    -\kappa \laplace z_0(t) + (1+\kappa) f(t)
    &\text{if } j=0,
    \\
    + \laplace z_j(t)
    &\text{if } j \not=0,
  \end{cases}
  \qquad\text{for } j \in \bbZ, t \geq 0,
\end{equation}
where $z$ represents some part of $p$ and where $f$ is a perturbation
whose purpose will become clear later. Given bounded initial data at
time $t=0$, the ODE \eqref{Eqn:ToyProblem} admits a unique solution,
and our goal in this section is to understand how the backward
diffusing $z_0$ interacts with the forward diffusing background and
the source term $(1+\kappa)f$. A typical numerical simulation is shown
in Figure \ref{Fig:SlowFast}.

\begin{figure}
  \centering
  \includegraphics[width=0.95\textwidth]{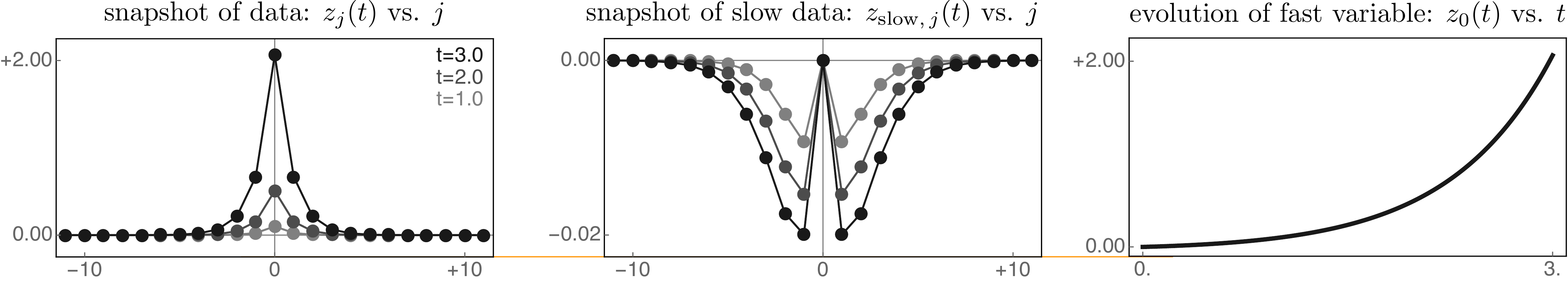}
  \caption{Solution to the spinodal problem \eqref{Eqn:ToyProblem}
    with $\ka=1$, vanishing initial data, and source term
    $f\at{t}\equiv 0.02$. Due to the backward diffusion of $z_0$,
    \emph{all} lattice data $z_j$ change rapidly in time and explode
    exponentially but the slow variables from Lemma
    \ref{lem:prototypical} behave much nicer.}
  \label{Fig:SlowFast}
\end{figure}

\BMHC Splitting the solution $z$  into its even and odd parts according to
\begin{align*}
z_{\even,j}\at{t}:=\tfrac12\bat{z_{+j}\at{t}+z_{-j}\at{t}}\qquad\text{and}\qquad 
z_{\odd,j}\at{t}:=\tfrac12\bat{z_{+j}\at{t}-z_{-j}\at{t}},
\end{align*}
respectively, we first observe that $z_{\even}$ also satisfies \eqref{Eqn:ToyProblem}, whereas  $z_{\odd}$  \EMHC solves the discrete heat equation. Next,
introducing the variables
\begin{equation}
\label{Eqn:SlowVariables}
  \zeta_n(t)
  =
  \tfrac{1+2\kappa}{2\kappa} z_{\even,n}(t)
  - \tfrac{1}{2\kappa} z_{\even,n-1}(t),
  \qquad
  n\geq1
\end{equation}
we verify by direct computation \BMHD the identities \EMHD
\begin{equation}
\label{Eqn:SlowDynamics}
  \dot z_0(t)
  =
  \tfrac{(2\kappa)^2}{1+2\kappa} \big( z_0(t) - \zeta_1(t) \big)
  + (1+\kappa) f(t)
\end{equation}
\BMHD and \EMHD
\begin{equation*}
  \dot \zeta_n(t)
  =
  \begin{cases}
    \zeta_2(t) - \zeta_1(t) - \tfrac{1+\kappa}{2\kappa} f(t)
    &\text{if } n=1,
    \\
    \laplace \zeta_n
    &\text{if } n>1.
  \end{cases}
\end{equation*}
The key observation is that $\zeta$ solves the discrete heat equation
on the semi-infinite domain \BMHD $n \geq 1$ \EMHD with inhomogeneous Neumann
boundary condition at $n=1$. Therefore, if the initial data $z(0)$ and
the source term $f(t)$ are uniformly small in $j$ and $t$,
respectively, then all components of $\zeta$ evolve \emph{slowly}, and
the same is true for $z_{\odd}$ as well. \BMHD On the other hand,  the \emph{fast} variable $z_0$ exhibits  \EMHC a strong tendency to grow
exponentially and \BMHD changes generically \EMHD by an order $1$ in times of
order $1$. In this sense, the change of variables
\begin{equation*}
  z \in \ell^1(\bbZ)
  \qquad\rightsquigarrow\qquad
  (z_0,z_{\odd},\zeta) \in \bbR \times \ell^1(\bbN) \times \ell^1(\bbN)
\end{equation*}
separates the slow and fast dynamics of \eqref{Eqn:ToyProblem}
and allows us to isolate a single `unstable mode' as follows.

\begin{lemma}[Slow-fast splitting for the prototypical
  phase-transition problem] Any solution to
  \label{lem:prototypical}
  \eqref{Eqn:ToyProblem} can be written as
  \begin{equation*}
    z_j(t) = z_{\fast,j}(t) + z_{\slow,j}(t)
    \qquad\BMHD\text{with}\EMHD\qquad
    z_{\fast,j}(t) := \frac{z_0(t)}{(1+2\kappa)^{|j|}},\qquad 
z_{\slow} := z-z_{\fast},
  \end{equation*}
  \BMHD and we have \EMHD 
  \begin{equation*}
    \sum_{j \in \bbZ} |z_{\slow,j}(t)|
    \leq
    C
    \Bigg(
    \sum_{j \in \bbZ} |z_j(0)| + \int\limits_0^t |f(s)| \,\dint{s}
    \Bigg)
  \end{equation*}
  for some constant $C$ which depends only on the parameter
  $\kappa$.
\end{lemma}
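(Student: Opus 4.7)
My approach is to separate the even and odd parts of $z$ about $j=0$ and then to diagonalize the even subproblem via the change of variables \eqref{Eqn:SlowVariables}. Setting $z_{\odd,n}:=(z_{+n}-z_{-n})/2$ for $n\geq 1$, a direct computation starting from \eqref{Eqn:ToyProblem} shows that $z_{\odd}$ satisfies the discrete heat equation on $\bbN$ with homogeneous Dirichlet condition at $n=0$. Since that heat semigroup is a contraction on $\ell^1(\bbN)$, the sum $\sum_{n\geq 1}|z_{\odd,n}(t)|$ is nonincreasing in time and is bounded by $\sum_{j\in\bbZ}|z_j(0)|$.

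For the even part $z_{\even,n}:=(z_{+n}+z_{-n})/2$ with $n\geq 0$, the prescription \eqref{Eqn:SlowVariables} defines slow variables $\zeta=(\zeta_n)_{n\geq 1}$, and the computation leading to \eqref{Eqn:SlowDynamics} shows that $\zeta$ satisfies the discrete heat equation on $\bbN$ with homogeneous Neumann condition at $n=1$ plus a source $-\tfrac{1+\kappa}{2\kappa}f(t)\delta_n^1$ localized at the boundary site. Duhamel's formula applied with the Neumann heat kernel, together with the fact that this semigroup is also an $\ell^1$-contraction, then yields
\begin{equation*}
  \sum_{n\geq 1}|\zeta_n(t)|
  \leq
  \sum_{n\geq 1}|\zeta_n(0)|
  +
  \tfrac{1+\kappa}{2\kappa}\int\limits_0^t|f(s)|\dint{s}
  \leq
  C\Bigl(\sum_{j\in\bbZ}|z_j(0)|+\int\limits_0^t|f(s)|\dint{s}\Bigr),
\end{equation*}
where the final step uses the elementary $\ell^1$-bound on $\zeta(0)$ in terms of $z(0)$ that follows directly from \eqref{Eqn:SlowVariables}.

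It remains to invert \eqref{Eqn:SlowVariables}. Rearranging gives $z_{\even,n}=\tfrac{2\kappa}{1+2\kappa}\zeta_n+\tfrac{1}{1+2\kappa}z_{\even,n-1}$ for $n\geq 1$, and iterating from $z_{\even,0}=z_0$ yields the explicit representation
\begin{equation*}
  z_{\even,n}(t)
  =
  \frac{z_0(t)}{(1+2\kappa)^n}
  +
  \frac{2\kappa}{1+2\kappa}\sum_{k=1}^{n}\frac{\zeta_k(t)}{(1+2\kappa)^{n-k}}.
\end{equation*}
The first term coincides with $z_{\fast,n}$ for all $n\geq 0$, and since $z_{\fast,n}$ is even in $n$ we obtain $z_{\slow,0}=0$ and $|z_{\slow,n}|\leq |z_{\even,|n|}-z_{\fast,n}|+|z_{\odd,|n|}|$ for $n\neq 0$. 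The first summand on the right is a discrete convolution of $\zeta$ with the geometric kernel $K_m=\tfrac{2\kappa}{(1+2\kappa)^{m+1}}$, whose $\ell^1$-mass equals $1$, so Young's inequality yields $\sum_{n\geq 1}|z_{\even,n}-z_{\fast,n}|\leq\sum_{k\geq 1}|\zeta_k|$. Summing the pointwise bound over $n\in\bbZ$ and combining with the odd-part estimate closes the argument.

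I do not expect a major obstacle: the central conceptual point -- that the unstable direction is precisely $(1+2\kappa)^{-|j|}$ and is faithfully encoded by the scalar $z_0$ -- is already built into the definition of $\zeta$, and what remains is careful bookkeeping. The only slightly delicate item is justifying the Duhamel step when the forcing is concentrated at the Neumann boundary, which I would handle by writing the $\zeta$-dynamics as $\dot\zeta_n=\laplace_N\zeta_n-\tfrac{1+\kappa}{2\kappa}f(t)\delta_n^1$ with $\laplace_N$ the Neumann Laplacian on $\bbN$ and then applying the standard variation-of-constants formula.
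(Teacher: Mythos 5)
Your argument is correct and follows essentially the same route as the paper: parity splitting, the same slow variables $\zeta$, the discrete heat equation for $\zeta$ with Neumann boundary and source at $n=1$, the inversion of \eqref{Eqn:SlowVariables} into a geometric-kernel convolution, and the resulting $\ell^1$ bound. The only cosmetic difference is that you invoke the half-line Neumann heat kernel directly, whereas the paper reduces to the whole-line kernel $g$ by off-site reflection across $j=1/2$; the two are equivalent.
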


\begin{proof}
  \underline{\emph{Parity splitting and odd solutions:}}
  In view of the even-odd parity of the prototypical phase-transition
  model \BMHD \eqref{Eqn:ToyProblem} it suffices to consider  solutions that are either even or
  odd.  For odd initial data, we always have $z_j\at{t}=-z_{- j}\at{t}$ and \EMHC
  the assertions follow with 
  \begin{align*}
  z_{\fast,j}(t) = 0, \qquad z_{\slow,j}(t) = z_j(t) = z_{\odd,j}(t)
\end{align*}
  since $z$ satisfies the discrete heat
  equation.\EMHC

  \par
  \underline{\emph{Even solutions:}}
  Using \BMHC  $z_j(t) = z_{\even,j}(t)$  as well as the \EMHC definition of $\zeta$ in \eqref{Eqn:SlowVariables} we
  verify the representation formula
  \begin{equation*}
    z_{-j}(t)
    =
    z_{j}(t)
    =
    \frac{z_0(t)}{(1+2\kappa)^j}
    +
    \frac{2\kappa}{(1+2\kappa)^{j+1}} \sum_{n=1}^j (1+2\kappa)^n \zeta_n(t)
    \qquad
    \text{for all } j \geq 1,
  \end{equation*}
\BMHD where the first and the second term on the right hand side represent $z_\fast$ and $z_\slow$, respectively. In particular, we estimate \EMHD
  \begin{align*}
    \sum_{j \in \bbZ} |z_{\slow,j}(t)|
    &\leq
    \sum_{j=1}^\infty \frac{4\kappa}{(1+2\kappa)^{j+1}}
    \sum_{n=1}^j (1+2\kappa)^n |\zeta_n(t)|
    \\&=
    \sum_{n=1}^\infty |\zeta_n(t)| 
    \sum_{j=n}^\infty \frac{4\kappa}{(1+2\kappa)^{j-n+1}}
    =
    2 \sum_{n=1}^\infty |\zeta_n(t)| 
  \end{align*}
  for all $t \geq 0$. Next, an off-site reflection with respect to
  $j=1/2$, that is,
  \begin{equation*}
    \widetilde \zeta_j(t) =
    \begin{cases}
      \zeta_j(t) &\text{if } j\geq1, \\
      \zeta_{1-j}(t) &\text{if } j\leq0,
    \end{cases}
  \end{equation*}
  transforms the boundary value problem for $\zeta$ into the
  discrete diffusion system
  \begin{equation*}
    \tderiv{t} \widetilde \zeta_j(t)
    =
    \laplace \widetilde\zeta_j(t) \BMHC- \EMHD \left( \delta_j^0 + \delta_j^1 \right)
    \tfrac{1+\kappa}{2\kappa} f(t)
    \qquad\text{for all}\quad j \in \bbZ \quad\text{and}\quad t \geq0
  \end{equation*}
  with source term at $j=0$ and $j=1$. \BMHD Duhamel's Principle gives\EMHD
  \begin{equation*}
    \zeta_j(t)
    =
    \widetilde \zeta_j(t)
    =
    \sum_{n \in \bbZ} g_{j-n}(t) \widetilde \zeta_n(0)
    \BMHC- \EMHD
    \int\limits_0^t \big( g_j(t-s) + g_{j-1}(t-s) \big) \tfrac{1+\kappa}{2\kappa}
    f(s) \,\dint{s}
  \end{equation*}
  for all $j \geq 1$, and the claim follows from
  \begin{equation*}
   \sum_{j \in \bbZ} |\widetilde \zeta_j(0)| 
    \leq
    C \sum_{j \in \bbZ} |z_j(0)|
  \end{equation*}
  and the mass conservation property of the discrete heat kernel.
\end{proof}

The proof of Lemma \ref{lem:prototypical} is intimately
related to the linearity of the spinodal problem
\eqref{Eqn:ToyProblem} as it allows us \BMHC to \EMHC construct the slow
variables explicitly. For a general bistable nonlinearity, it remains
a challenging task to identify the analog to
\eqref{Eqn:SlowVariables} and \eqref{Eqn:SlowDynamics}. \BMHC We also mention that the existence of a single unstable mode has been shown in \cite{LaMa12} for a finite dimensional analog to \eqref{Eqn:ToyProblem} using spectral analysis of tridiagonal matrices. It has also been argued that spinodal passages are typically fast with respect to the disffusive time scale. Lemma \ref{lem:prototypical} extends these results to unbounded domains and quantifies the asymptotic slowness of the stable modes in a robust and reliable way. \EMHC

\subsection{Spinodal fluctuations}\label{sect:defFluct}

As indicated in the previous section, we think of spinodal
fluctuations as unstable modes in an otherwise diffusive evolution,
which are evoked by spinodal visits of the $u_j$'s or, equivalently,
by the linear backward diffusion of the corresponding $p_j$'s.
\BMHC To study this systematically, \EMHC
we define the \emph{$k$-th spinodal fluctuation}
$r^{(k)} := (r^{(k)}_j)_{j \in \bbZ}$ to be
\begin{equation}
  \label{Eqn:FA.DefFluctuations}
  r^{(k)}_j(t)
  :=
  \begin{dcases}
    0 &\text{for } 0 \leq t \leq t_k^\#,
    \\
    -p_j(t) + q^{(k)}_j(t) &\text{for } t_k^\# \leq t \leq t_k^*,
    \\
    \sum_{n \in \bbZ} g_{j-n}(t-t_k^*) r_n^{(k)}(t_k^*) &\text{for }
    t_k^* < t,
  \end{dcases}
\end{equation}
where $g$ is the discrete heat kernel from \eqref{Eqn:HeatKernel} and
\begin{equation}
  \label{Eqn:FA.DefDiffPart}
  q^{(k)}_j(t)
  :=
  \begin{dcases}
    0
    &\text{for } t < t_k^\#,
    \\
    \sum_{n \in \bbZ} g_{j-n}(t-t_k^\#) p_n(t_k^\#)
    &\text{for } t > t_k^\#
  \end{dcases}
\end{equation}
solves the discrete heat equation for $t>t_k^\#$ with initial data
$p(t_k^\#)$.

\BMHC Formula \eqref{Eqn:FA.DefFluctuations} is \BMHD at the heart of \BMHC our asumptotic analysis and  enables us to characterize both the local and the global behavior of the fluctuations. On the local side, we infer from \eqref{Eqn:FA.DefFluctuations} and
Corollary \ref{cor:structure-of-p}  that \EMHC the evolution of each
$r^{(k)}$ is determined by the initial condition
\begin{equation}
  \label{eq:FA.idata}
  r_j^{(k)}(t_k^\#) = 0
  \qquad\text{for all } j \in \bbZ
\end{equation}
as well as the equations
\begin{equation}
  \label{Eqn:FA.DynLaw1}
  \dot r_j^{(k)}(t)
  =
  \begin{cases}
    \big( 1- \chi_k(t) \big) \laplace r_k^{(k)}(t)
    +
    \chi_k(t) \Big( -\kappa \laplace r_k^{(k)}(t)
    + (1+\kappa) \dot q_k^{(k)}(t) \Big)
    &
    \text{if } j=k,
    \\
    \laplace r_j^{(k)}(t)
    &
    \text{if } j\not=k
  \end{cases}
\end{equation}
for almost every $t \in (t_k^\#,t_k^*)$ and
\begin{equation}
  \label{Eqn:FA.DynLaw2}
  \dot r_j^{(k)}(t) = \laplace r_j^{(k)}(t),
  \qquad
  j \in \bbZ
\end{equation}
for $t>t_k^*$, \BMHC where the indicator function $\chi_k$ has been introduced in \eqref{eq:def-chi}. In particular, $r^{(k)}(t)$ satisfies -- at any time $t$ with $\chi_k\at{t}=1$ and hence on the entire interval $\oointerval{t^\flat_k}{t^*_k}$ -- a shifted and delayed variant
of the prototypical phase transition problem \eqref{Eqn:ToyProblem}
with forcing term $\dot q_k^{(k)}(t)$, and this gives rise to the local fluctuation estimates in \S\ref{sec:local-fluct-estim}. On the other hand, arguing recursively we derive  from \eqref{Eqn:FA.DefFluctuations} and \eqref{Eqn:FA.DefDiffPart} the representation
formula 
\begin{equation}
  \label{eq:p-from-idata-and-fluctuation}
  p_j(t)
  =
  \sum_{n \in \bbZ} g_{j-n}(t) p_j(0)
  -
  \sum_{k \geq 1} r_j^{(k)}(t)
  \qquad
  \text{for all } j\in\Zset \text{ and } t \geq 0,
\end{equation}
where the first and the second sum on the right hand side account for
the initial data and the cumulative impact of all phase transitions,
respectively. This identity allows us in \S\ref{sec:glob-fluct-estim} to sheave the local fluctuation estimates into global ones and to quantify how much $p$ deviates from the diffusive reference data  due to the spinodal visits of all particles. Finally,  since $p$ and $q^{(k)}$ are uniformly bounded due to
\eqref{eq:bounds-on-u} and \eqref{Eqn:FA.DefDiffPart}, the maximum principle for the discrete heat
equation guarantees 
\EMHC
\begin{equation}
  \label{eq:unif-bound-fluct}
  \sup_{k \geq 1} \,
  \sup_{j \in \bbZ} \,
  \sup_{t \geq 0} |r^{(k)}_j(t)|
  \leq C,
\end{equation}
where the constant $C$ depends only on the potential $\Phi$ and the
initial data $p(0)$.

The remainder of \S\ref{sect:fluctuations} deals with the analysis of
the spinodal fluctuations. As indicated in Figure
\ref{fig:all_fluctuations}, it turns out that spinodal excursions and
the spinodal passage of a $u_k$ lead to two distinguishable parts of
$r^{(k)}$, namely the \emph{negligible fluctuations} $r_{\neg}^{(k)}$
and the \emph{essential fluctuations} $r_{\ess}^{(k)}$,
respectively. We will show that the negligible fluctuations are not
relevant for the macroscopic dynamics, whereas the essential
fluctuations contribute significantly to them. More precisely,
$r_{\ess}^{(k)}$ can be split further into a \emph{regular} part,
which leads to a sufficiently regular limit contribution, and a
\emph{residual} part which vanishes in suitable function spaces, see
the proof \BMHC of \EMHC Proposition \ref{pro:compactness} below.

We finally \BMHC emphasize \EMHC that phase transitions in the bilinear case
$\ka=\infty$ are instantaneous processes since the spinodal region has
shrunk to a point.  In particular, at the phase transition time
$t^*_k=t^\#_k$, the value of $u_k$ is continuous but changes its sign
from negative to positive while $p_k$ is discontinuous as it jumps
down from $+p^*$ to $-p_*$. We therefore have
\begin{align*}
  r^{(k)}_j\at{t^*_k+0} = r^{(k)}_{\ess,j}\at{t^*_k+0}=2p_*\delta_j^k
  \qquad \text{for}\qquad
  \kappa = \infty
\end{align*} 
and no negligible fluctuations at all.

\begin{figure}
  \centering
  \includegraphics[width=.75\textwidth]{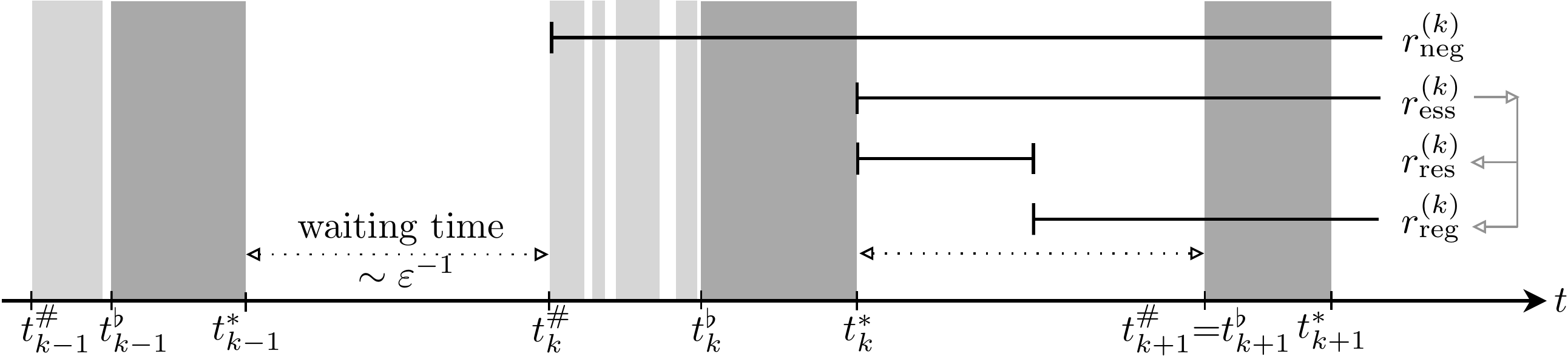} %
  \\ \vspace{0.02\textheight} %
  \includegraphics[width=.5\textwidth]{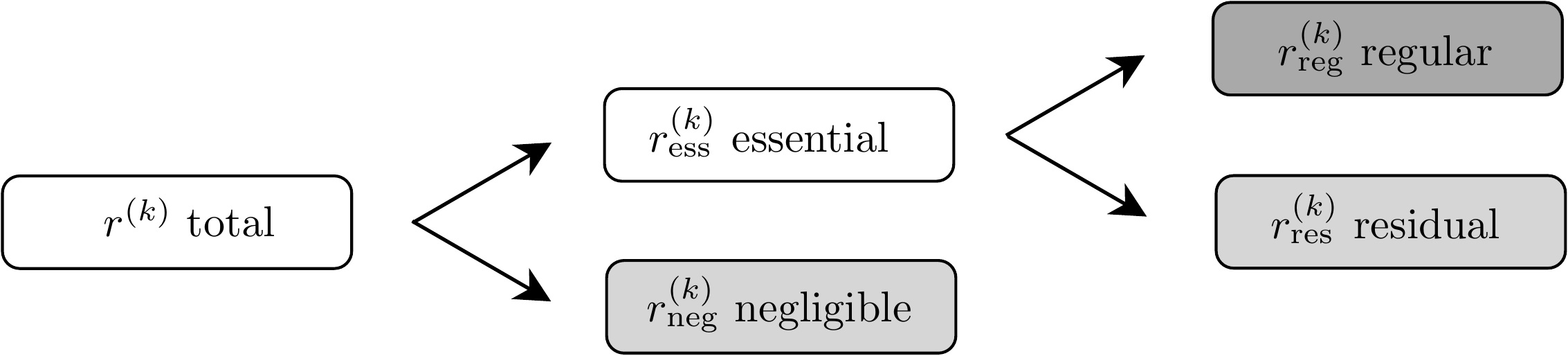}
  \caption{Life span of the total fluctuations
    \eqref{Eqn:FA.DefFluctuations} and their parts defined in
    \eqref{Eqn:DefEssFluct}, \eqref{Eqn:DefNegFluct} and
    \eqref{Eqn:DefRegAndResFluct}. Both the negligible and the
    residual fluctuations vanish in the macroscopic limit, see
    Corollary \ref{cor:bounds-neg-fluct} and Lemma
    \ref{lem:bounds-res-fluct}, while the sum of all regular
    fluctuations drives the interface in the free boundary problem
    as shown in \S\ref{sect:limit}.}
  \label{fig:all_fluctuations}
\end{figure}

\subsection{Local fluctuation estimates}
\label{sec:local-fluct-estim}

In the next two lemmas, we study \BMHC the \EMHC fluctuations $r^{(k)}$ for a
fixed $k \geq 1$, and a key quantity for the analysis is
\begin{equation}
  \label{Eqn.FA.DefDk}
  D_k
  :=
  \int\limits_{t_k^\#}^{t_k^*}
  |\dot q^{(k)}_k(s)|\,\dint{s},
\end{equation}
which allows us to bound the source term in
\eqref{Eqn:FA.DynLaw1}. Specifically, employing a slow-fast splitting
as in \S\ref{sec:prot-phase-trans} we characterize the fluctuations
induced by $u_k$ at the end of its phase transition and show
that these are -- up to small error terms -- given by a shifted
variant of the universal \emph{impact profile} $\varrho$ with
\begin{align}
  \label{Eqn:ImpactProfile}
  \varrho_j := \frac{2p_*}{(1+2\kappa)^{|j|}},
\end{align}
which depends only on $\kappa$ and is illustrated in Figure
\ref{Fig:EssFluktuations}. Notice that \BMHD the definition of $p_*$ in \EMHD \eqref{Eqn:Parameters} ensures  $\sum_{j\in\Zset}\varrho_j=2$ for all $\kappa \in (0,\infty)$ as
well as $\varrho_j=2\delta_j^0$ for $\kappa=\infty$ and
$\varrho_j\to0$ pointwise as $\ka\to0$.

\begin{lemma}[Estimates for spinodal excursions of $u_k$]
  \label{lem:loc-excursions}
  For any $k \geq 1$ we have
  \begin{equation}
    \label{eq:loc-excursions1}
    \sup_{t \in [t_k^\#,t_k^\flat]} \sum_{j \in \bbZ} |r^{(k)}_j(t)|
    \leq
    C (1+D_k)
  \end{equation}
  as well as
  \begin{equation}
    \label{eq:loc-excursions2}
    \sum_{j \in \bbZ} |r^{(k)}_j(t_k^\flat)|
    \leq
    C D_k
  \end{equation}
  for some constant $C>0$ \BMHC and spinodal entrance times $t_k^\#$, $t_k^\flat$ as in \eqref{Eqn:DefSpinodalTimes}.\EMHC
\end{lemma}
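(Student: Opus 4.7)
I would prove both estimates by decomposing the time interval $[t_k^\#,t_k^\flat]$ into the successive spinodal excursions of $u_k$ (intervals where $\chi_k=1$) and the interlacing intervals where $u_k$ has fallen back into $\Theta_-$ (where $\chi_k=0$). On each of the latter, $r^{(k)}$ satisfies the discrete heat equation globally, so $\ell^1$-contractivity of the nonnegative unit-mass kernel $g$ from \eqref{Eqn:HeatKernel} gives that $\|r^{(k)}(t)\|_{\ell^1}$ is non-increasing there. On each excursion $(t_0,t_{\mathrm{end}})$, the dynamics \eqref{Eqn:FA.DynLaw1} combined with the initial condition \eqref{eq:FA.idata} matches precisely the prototypical spinodal problem \eqref{Eqn:ToyProblem} centered at $j=k$ and with source $f(s)=\dot q^{(k)}_k(s)$, so Lemma~\ref{lem:prototypical} applies directly.

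The crucial observation that unlocks both inequalities is the following pinning of $v_k:=r^{(k)}_k$ at every endpoint of a spinodal excursion. Since $u_k$ enters and leaves the spinodal region through $u_k=-u_*$, we have $p_k=p_*$ at such times. Combined with the definition \eqref{Eqn:FA.DefFluctuations}, $r^{(k)}_k=-p_k+q^{(k)}_k$, and the fact that $q^{(k)}_k(t_k^\#)=p_k(t_k^\#)=p_*$ together with $\dot q^{(k)}_k=\laplace q^{(k)}_k$, this yields
\begin{equation*}
  |r^{(k)}_k(t)| \;=\; |q^{(k)}_k(t)-p_*| \;\leq\; \int_{t_k^\#}^{t}|\dot q^{(k)}_k(s)|\,\dint{s} \;\leq\; D_k
\end{equation*}
at every excursion endpoint. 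Because the fast component from Lemma~\ref{lem:prototypical} satisfies $\|r^{(k)}_{\mathrm{fast}}\|_{\ell^1}\leq C|v_k|$, the fast part is therefore automatically controlled by $CD_k$ at excursion endpoints, which is the mechanism that allows the $D_k$-only estimate \eqref{eq:loc-excursions2} at $t_k^\flat$.

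To obtain \eqref{eq:loc-excursions1}, I would iterate over excursions: at each one, the slow-part bound from Lemma~\ref{lem:prototypical} gives $\|r^{(k)}_{\mathrm{slow}}(t_{\mathrm{end}})\|_{\ell^1}\leq C(\|r^{(k)}(t_0)\|_{\ell^1}+D_k^{(i)})$ where $D_k^{(i)}$ is the source integrated over the excursion, while the fast part during the excursion is controlled uniformly by the $\ell^\infty$-bound $|v_k|\leq C$ inherited from \eqref{eq:bounds-on-u} and \eqref{eq:unif-bound-fluct}; heat-phase contractivity transfers these bounds to the next excursion's initial data. For \eqref{eq:loc-excursions2}, the sharpening comes from replacing the uniform $\ell^\infty$-bound by the sharper endpoint estimate $|v_k(t_k^\flat)|\leq D_k$ above, so that both the fast part at $t_k^\flat$ and (after bookkeeping of the slow variables through the entire sequence of excursions) the slow part can be absorbed into $CD_k$. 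In both inequalities the source integrals telescope into $D_k$ because $\sum_i D_k^{(i)}\leq \int_{t_k^\#}^{t_k^*}|\dot q^{(k)}_k|=D_k$.

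\textbf{Main obstacle.} The delicate point is that the slow-part bound in Lemma~\ref{lem:prototypical} comes with a multiplicative constant $C=C(\kappa)>1$ in front of $\|v(t_0)\|_{\ell^1}$, coming from the norm equivalence between $\|z_{\mathrm{slow}}\|_{\ell^1}$ and $\|\zeta\|_{\ell^1}$; naive iteration over $N$ excursions would produce a harmful factor $C^N$. To avoid this I would work in the variables $\zeta$ from \eqref{Eqn:SlowVariables} directly, where the bound inherited from the $\ell^1$-contractivity of the Duhamel formula for $\tilde\zeta$ has coefficient one on the initial data, and exploit that $\zeta$ still evolves almost diffusively across heat phases (up to a boundary correction at $n=1$ that can be absorbed into the fast part using $r^{(k)}_{\mathrm{slow},k}=0$). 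This conversion of the iteration into a single Duhamel-type estimate for the slow variables, combined with the endpoint bound $|v_k|\leq D_k$ for the fast variable at every $u_k=-u_*$, is what turns $C(1+D_k)^N$ into the desired $C(1+D_k)$ and $CD_k$ respectively.
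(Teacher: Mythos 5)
You take a genuinely different route from the paper, and the difference is important. You propose to decompose $[t_k^\#,t_k^\flat]$ into alternating spinodal excursions and heat phases, apply Lemma~\ref{lem:prototypical} on each excursion, and iterate with $\ell^1$-contractivity on the heat phases. The paper instead derives a single $\ell^1$-estimate on the whole interval at once: it rewrites \eqref{Eqn:FA.DynLaw1} as
\begin{equation*}
  \dot r^{(k)}_j = \laplace r^{(k)}_j + \delta_j^k\,\chi_k\,\tfrac{1+\kappa}{\kappa}\bigl(\dot r^{(k)}_k - \dot q^{(k)}_k\bigr),
\end{equation*}
differentiates $\sum_j|r^{(k)}_j|$, uses $\sum_j \sgn(r^{(k)}_j)\laplace r^{(k)}_j\leq 0$ (monotonicity of $\sgn$) together with $\sgn(r^{(k)}_k)\,\dot r^{(k)}_k = \tderiv{t}|r^{(k)}_k|$ a.e., and then integrates from $t_k^\#$ where $r^{(k)}$ vanishes. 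This yields the unified bound $\sum_j|r^{(k)}_j(t)|\leq C|r^{(k)}_k(t)|+CD_k$ on all of $[t_k^\#,t_k^\flat]$; \eqref{eq:loc-excursions1} follows from the uniform bound \eqref{eq:unif-bound-fluct} and \eqref{eq:loc-excursions2} from the pinning observation. Lemma~\ref{lem:prototypical} plays no role in this lemma at all; the paper reserves it for the spinodal passage in Lemma~\ref{lem:loc-passage}. Your pinning observation $|r^{(k)}_k|=|q^{(k)}_k-p_*|\leq D_k$ at all excursion entry and exit times, and the reason behind it, is exactly the ingredient the paper uses, so you do have the crucial idea.

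However, there is a genuine gap precisely at the point you label the ``main obstacle.'' The variables $\zeta_n$ from \eqref{Eqn:SlowVariables} satisfy the Neumann heat equation only during excursions, because the structure at $n=1$ relies on the backward-diffusion equation for $z_0$. During heat phases ($\chi_k=0$) the evolution of $\zeta_1$ acquires a different, inhomogeneous boundary term depending on $z_0$, and the fast profile $z_0/(1+2\kappa)^{|j|}$ is not preserved by the forward heat evolution, so the decomposition $z=z_\fast+z_\slow$ has no canonical meaning across heat phases. Your claim that this ``boundary correction at $n=1$ can be absorbed into the fast part using $r^{(k)}_{\mathrm{slow},k}=0$'' is asserted but not established, and it is not obviously true that the $\ell^1$-norm of $\zeta$ is non-increasing under the heat-phase boundary condition. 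Without this, the iteration falls back on the naive estimate and the factor $C^N$ you correctly worry about is not removed. Carrying out the bookkeeping you sketch would most likely reproduce, with more effort, the paper's direct argument — which sidesteps the iteration entirely by observing that the $\dot r^{(k)}_k$ part of the forcing is a total derivative that the sign function converts into a boundary term.
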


\begin{proof}
  Throughout the proof we drop the upper index $k$ to ease the
  notation. Equation \eqref{Eqn:FA.DynLaw1} can be written as
  \begin{equation*}
    \dot r_j(t)
    =
    \laplace r_j(t) + \delta_j^k \chi_k(t) \frac{1+\kappa}{\kappa}
    \bat{ \BMHC \dot r_k\at{t} - \dot q_k\at{t}\EMHC }
  \end{equation*}
  for $t \in (t_k^\#,t_k^\flat)$, and using discrete integration by
  parts we find
  \begin{align}
    \label{eq:lem-loc-excursions-eq2}
    \begin{split}
      \deriv{t} \sum_{j \in \bbZ} |r_j(t)|
      &=
      \sum_{j \in \bbZ} \sgn r_j(t) \laplace r_j(t)
      + \sgn r_k(t) \, \chi_k(t) \frac{1+\kappa}{\kappa}\bat{\BMHC \dot r_k\at{t} - \dot q_k\at{t}\EMHC}
      \\
      &=
      - \sum_{j \in \bbZ} \nabla_+ \sgn r_j(t) \nabla_+ r_j(t)
      + \sgn r_k(t) \, \chi_k(t) \frac{1+\kappa}{\kappa}\bat{\BMHC \dot r_k\at{t} - \dot q_k\at{t}\EMHC}
      \\
      &\leq
      C \left( \tderiv{t} |r_k(t)| + \BMHC |\dot q_k\at{t}|\EMHC \right),
    \end{split}
  \end{align}
  where we used the monotonicity of the sign function. Thanks to
  \eqref{eq:FA.idata}, the fluctuations $r$ vanish at time $t_k^\#$,
  so an integration yields
  \begin{equation}
    \label{eq:lem-loc-excursions-eq1}
    \sum_{j \in \bbZ} |r_j(t)|
    \leq
    C |r_k(t)| + C \int\limits_{t_k^\#}^{t} |\dot q_k(s)| \,\dint{s}
  \end{equation}
  for all $t \in [t_k^\#,t_k^\flat]$, and this proves
  \eqref{eq:loc-excursions1} due to the bound 
  \eqref{eq:unif-bound-fluct}. Moreover, by
  \begin{align*}
    q_k(t_k^\#) = p_k(t_k^\#) = p_k(t_k^\flat) = p_*
  \end{align*}
  we have
  \begin{equation*}
    |r_k(t_k^\flat)|
    =
    |q_k(t_k^\flat) - p_*|
    \leq
    \int\limits_{t_k^\#}^{t_k^\flat} |\dot q_k(s)| \,\dint{s}
    + |q_k(t_k^\#) - p_*|
    \leq
    D_k + 0
  \end{equation*}
  and obtain \eqref{eq:loc-excursions2} as a further consequence of
  \eqref{eq:lem-loc-excursions-eq1}.
\end{proof}

\begin{figure}
  \centering
  \includegraphics[width=0.85\textwidth]{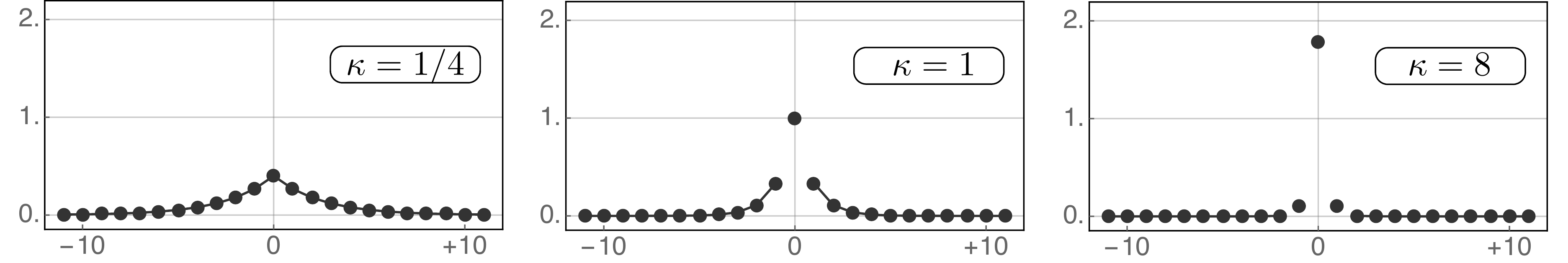}
  \caption{ The impact profile $\varrho$ from
    \eqref{Eqn:ImpactProfile} \BMHC as function of $j$ for several values of the spinodal
    parameter $\ka$. \EMHC The essential fluctuations
    produced by each microscopic phase transition are given by a
    shifted and delayed variant of $g\ast \varrho$, see
    \eqref{eq:loc-passage2} and \eqref{Eqn:DefEssFluct}, and
    contribute to the driving force of the macroscopic phase
    interface.}
  \label{Fig:EssFluktuations}
\end{figure}

\begin{lemma}[Estimates for the spinodal passage of $u_k$]
  \label{lem:loc-passage}
  For any $k \geq 1$ we have
  \begin{equation}
    \label{eq:loc-passage1}
    \sup_{t \in [t_k^\flat,t_k^*]} \sum_{j \in \bbZ} |r^{(k)}_j(t)|
    \leq
    C (1+D_k)
  \end{equation}
  as well as
  \begin{equation}
    \label{eq:loc-passage2}
    \sum_{j \in \bbZ} \left|
      r^{(k)}_j(t_k^*) -  \varrho_{j-k} \right|
    \leq
    C D_k
  \end{equation}
  for some constant $C$.
\end{lemma}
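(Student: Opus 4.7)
The plan is to apply the slow-fast splitting of Lemma~\ref{lem:prototypical} to the fluctuation $r^{(k)}$ on the spinodal passage interval $[t_k^\flat,t_k^*]$, where by definition of the passage the indicator satisfies $\chi_k(t)\equiv 1$. After the shift $j \mapsto j-k$, the dynamics \eqref{Eqn:FA.DynLaw1} for $r^{(k)}$ then coincides exactly with the prototypical spinodal problem \eqref{Eqn:ToyProblem} driven by the source term $f(t) = \dot q_k^{(k)}(t)$, with initial time $t_k^\flat$ and initial datum $r^{(k)}(t_k^\flat)$. The hypotheses of Lemma~\ref{lem:prototypical} are met because Lemma~\ref{lem:loc-excursions} provides the $\ell^1$-bound
\begin{equation*}
\sum_{j\in\Zset} | r^{(k)}_j(t_k^\flat) | \leq C D_k,
\end{equation*}
and the source term obeys $\int_{t_k^\flat}^{t_k^*}|\dot q_k^{(k)}(s)|\,\dint{s}\leq D_k$ by the definition \eqref{Eqn.FA.DefDk} of $D_k$.

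First I would decompose $r^{(k)}(t) = r^{(k)}_\fast(t) + r^{(k)}_\slow(t)$, where the fast part is concentrated at $j=k$ and reads
\begin{equation*}
r^{(k)}_{\fast,j}(t) = \frac{r_k^{(k)}(t)}{(1+2\ka)^{|j-k|}}.
\end{equation*}
Lemma~\ref{lem:prototypical} then yields $\sum_j |r^{(k)}_{\slow,j}(t)| \leq C(D_k + D_k) \leq C D_k$ for all $t\in[t_k^\flat,t_k^*]$, while the uniform bound \eqref{eq:unif-bound-fluct} on $r_k^{(k)}$ gives $\sum_j |r^{(k)}_{\fast,j}(t)| \leq C$ uniformly in $t$ (as the geometric tail is summable for $\ka\in(0,\infty)$). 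The supremum assertion \eqref{eq:loc-passage1} follows at once by the triangle inequality.

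To establish \eqref{eq:loc-passage2}, the key observation is the evaluation $r_k^{(k)}(t_k^*) \approx 2p_*$. Indeed, the definitions \eqref{Eqn:FA.DefFluctuations}, \eqref{Eqn:FA.DefDiffPart} combined with the exit/entrance identities $p_k(t_k^*)=\Phi'(u_*)=-p_*$ and $q_k^{(k)}(t_k^\#)=p_k(t_k^\#)=+p_*$ yield
\begin{equation*}
r_k^{(k)}(t_k^*) - 2p_* = q_k^{(k)}(t_k^*) - q_k^{(k)}(t_k^\#) = \int_{t_k^\#}^{t_k^*}\dot q_k^{(k)}(s)\,\dint{s},
\end{equation*}
and the right-hand side has modulus at most $D_k$. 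Since $\varrho_{j-k} = 2p_*(1+2\ka)^{-|j-k|}$, this implies
\begin{equation*}
\sum_{j\in\Zset} | r^{(k)}_{\fast,j}(t_k^*) - \varrho_{j-k} | = | r_k^{(k)}(t_k^*) - 2p_* | \sum_{j\in\Zset}(1+2\ka)^{-|j-k|} \leq C D_k,
\end{equation*}
and \eqref{eq:loc-passage2} follows by combining this with the slow-part bound via the triangle inequality.

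The main obstacle is the correct matching between the fluctuation equation and the prototypical problem, in particular the identification of $\dot q_k^{(k)}$ as the admissible source term in \eqref{Eqn:ToyProblem} and the verification that the prefactor $(1+\ka)$ in \eqref{Eqn:FA.DynLaw1} is indeed the one occurring in Lemma~\ref{lem:prototypical}. Once this algebraic correspondence is in place, the extraction of the universal profile $\varrho$ is a short computation based on the boundary values of $p_k$ at the first spinodal entrance and the exit time; every other quantity is absorbed into the slow part and hence estimated by $D_k$.
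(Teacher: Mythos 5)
Your proof of \eqref{eq:loc-passage2} is exactly the paper's argument: after the shift by $k$ and $t_k^\flat$ the passage dynamics \eqref{Eqn:FA.DynLaw1} become a verbatim copy of the prototypical problem \eqref{Eqn:ToyProblem} with $f=\dot q_k^{(k)}$, Lemma~\ref{lem:prototypical} controls the slow part by $\sum_j|r^{(k)}_j(t_k^\flat)|+\int_{t_k^\flat}^{t_k^*}|\dot q_k^{(k)}|\lesssim D_k$ (using \eqref{eq:loc-excursions2} and \eqref{Eqn.FA.DefDk}), and the scalar evaluation $r_k^{(k)}(t_k^*)-2p_*=q_k^{(k)}(t_k^*)-q_k^{(k)}(t_k^\#)$ pins the fast mode at $2p_*$ up to an $O(D_k)$ error, which extracts the impact profile $\varrho$. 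The only place you diverge from the paper is \eqref{eq:loc-passage1}: the paper simply reuses the $\ell^1$-Lyapunov inequality \eqref{eq:lem-loc-excursions-eq2} from the proof of Lemma~\ref{lem:loc-excursions} (integrating $\tfrac{\dint}{\dint t}\sum_j|r_j|\lesssim\tfrac{\dint}{\dint t}|r_k|+|\dot q_k|$ and then invoking \eqref{eq:unif-bound-fluct}), whereas you obtain the same bound by again splitting $r^{(k)}=r^{(k)}_\fast+r^{(k)}_\slow$, estimating $\sum_j|r^{(k)}_{\slow,j}|\lesssim D_k$ by Lemma~\ref{lem:prototypical} and $\sum_j|r^{(k)}_{\fast,j}|\lesssim|r_k^{(k)}|\lesssim 1$ by \eqref{eq:unif-bound-fluct} and summability of the geometric tail. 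Both routes lean on the same uniform bound \eqref{eq:unif-bound-fluct} to absorb the $O(1)$ contribution of $r_k^{(k)}$; yours has the small aesthetic advantage of treating both estimates through the single lens of the slow-fast splitting, at the cost of invoking Lemma~\ref{lem:prototypical} on the whole interval rather than only at the exit time. Either way the proof is correct; just tidy up the constant bookkeeping in "$C(D_k+D_k)\le CD_k$", which should read $\le C'D_k$ for a new constant.
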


\begin{proof}
  The proof of \eqref{eq:loc-passage1} is identical to the one of
  \eqref{eq:loc-excursions1} in the previous lemma because
  \eqref{eq:lem-loc-excursions-eq2} is also true for
  $t\in[t^\#_k,t^*_k]$.  To derive \eqref{eq:loc-passage2}
  let us consider times $t \in (t_k^\flat, t_k^*)$, so that $u_k$ is
  located inside the spinodal region and \eqref{Eqn:FA.DynLaw1} can be
  written as
  \begin{equation*}
    \dot r_j(t) =
    \begin{cases}
      -\kappa \laplace r_k(t) + (1+\kappa) \dot q_k(t) &\text{if } j=k, \\
      + \laplace r_j(t) &\text{if } j\not=k,
    \end{cases}
  \end{equation*}
  where we dropped the upper index $k$ for simplicity of notation.
  After shifting time and space by $t_k^\flat$ and $k$,
  respectively, this is the prototypical phase transition problem
  \eqref{Eqn:ToyProblem} with $z=r$ and $f=\dot q$, and from
  Lemma \ref{lem:prototypical} we obtain
  \begin{equation*}
    \sum_{j \in \bbZ}
    \Big| r_j(t) - \frac{r_k(t)}{(1+2\kappa)^{|j-k|}} \Big|
    \leq
    C \Bigg( \sum_{j \in \bbZ} |r_j(t_k^\flat)|
      + \int\limits_{t_k^\flat}^t |\dot q_k(s)| \,\dint{s} \Bigg)\leq C D_k,
  \end{equation*}
  where the second inequality is due to
  \eqref{eq:loc-excursions2} and \eqref{Eqn.FA.DefDk}. The
  claim \eqref{eq:loc-passage2} now follows because
  $p_k(t_k^*) = -p_*$ and $q_k(t_k^\#) = p_*$ provide
  \begin{equation*}
    \babs{r_k(t_k^*) - 2p_*}
    =
    \babs{q_k(t_k^*) - p_*}
    \leq
    \int\limits_{t_k^\#}^{t_k^*} |\dot q(s)| \,\dint{s}
    +
    |q_k(t_k^\#) - p_*|
    \leq
    D_k+0
  \end{equation*}
  and since $\sum_{j \in \bbZ} \varrho_j$ is finite.
\end{proof}

For small $D_k$ we infer from \eqref{eq:loc-passage2} that at the end
of the spinodal passage of $u_k$ the induced fluctuations
$r^{(k)}(t_k^*)$ are in fact close to the shifted impact profile from
\eqref{Eqn:ImpactProfile}. This observation together with the
definition of $r^{(k)}(t)$ for $t>t_k^*$ \BMHD -- see \eqref{Eqn:FA.DefFluctuations}, \eqref{Eqn:FA.DynLaw1}, and \eqref{Eqn:FA.DynLaw2} --  \EMHC motivates the splitting of
$r^{(k)}$ into an \emph{essential} part
\begin{equation}
  \label{Eqn:DefEssFluct}
  r^{(k)}_{\ess,j}(t)
  :=
  \chi_{\{t \geq t_k^*\}}\sum_{n \in \bbZ} g_{j-n}(t-t_k^*) \varrho_{n-k}
\end{equation}
and the remainder
\begin{equation}
\label{Eqn:DefNegFluct}
  r^{(k)}_{\neg,j}
  :=
  r^{(k)}_j(t) - r^{(k)}_{\ess,j}(t),
\end{equation}
which we call the \emph{negligible} fluctuations. We prove in
\S\ref{sec:glob-fluct-estim} below that these names are justified
since Assumption \ref{ass:macro} implies that $r^{(k)}_{\ess}$ is
relevant for the limit dynamics, whereas $r^{(k)}_{\neg}$ is not.

Notice also that Lemma \ref{lem:loc-excursions} and Lemma
\ref{lem:loc-passage} are again intimately related to the trilinearity
of $\Phi^\prime$. It remains open to identify more robust proof
strategies that cover general bistable nonlinearities as well and
provide the analog to the impact profile \eqref{Eqn:ImpactProfile}
and the splitting \eqref{Eqn:DefEssFluct}--\eqref{Eqn:DefNegFluct}
for a broader class of nonlinear lattices \eqref{eq:master-eq}.

\subsection{Global fluctuation estimates}
\label{sec:glob-fluct-estim}

In view of \S\ref{sec:local-fluct-estim}, the main technical task for
collectively controlling the fluctuations for all $k \geq 1$ is to
estimate the sum of the \BMHD quantities $D_k$ from \eqref{Eqn.FA.DefDk}. \EMHD Our starting point is the
representation formula
\begin{equation}
  \label{eq:q-k-formula}
  q^{(k)}_j(t)
  =
  \sum_{n \in \bbZ} g_{j-n}(t) p_n(0)
  -
  \sum_{l=1}^{k-1} \sum_{n \in \bbZ} g_{j-n}(t-t_l^*) r^{(l)}_n(t_l^*)
  \qquad
  \text{for all}
  \quad j \in \bbZ \quad\text{and}\quad t \geq t_k^\#,
\end{equation}
which follows from \eqref{Eqn:FA.DefFluctuations} and
\eqref{Eqn:FA.DefDiffPart} by induction over $k$ and splits $q^{(k)}$
into one part stemming from the initial data and another one from the
previous phase transitions.

\begin{lemma}[Upper bound for $D_k$]
  \label{lem:D_k-upper-bound}
  There exists a constant $C$ such that
  \begin{equation*}
    \sum_{k=1}^{K_\eps} D_k
    \leq
    \frac{C}{\sqrt{\eps}}
  \end{equation*}
  for all sufficiently small $\eps>0$.
\end{lemma}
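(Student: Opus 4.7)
The plan is to use the representation formula \eqref{eq:q-k-formula} and differentiate in time to decompose
\[
\dot q^{(k)}_k(s) = A_k(s) + B^{\ess}_k(s) + B^{\neg}_k(s),
\]
where $A_k(s) := \sum_m \dot g_{k-m}(s) p_m(0)$ collects the contribution of the initial data, while the other two pieces arise from the splitting $r^{(l)}(t_l^*) = \varrho_{\cdot - l} + \tilde r^{(l)}$ of the previous fluctuations into their essential part (given by the impact profile) and a residual with $\|\tilde r^{(l)}\|_{\ell^1} \leq C D_l$ by Lemma~\ref{lem:loc-passage}. Since the intervals $(t_k^\#, t_k^*)$ are pairwise disjoint and contained in $(0,t_\fin)$ by Proposition~\ref{pro:existence}, the plan is to estimate the time-integral of $|A_k|$, $|B_k^{\ess}|$ and $|B_k^{\neg}|$ separately and then sum over $k$.

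For $A_k$ I would first use $\dot g = \Delta g$ and summation by parts to obtain $A_k(s) = \sum_m g_{k-m}(s) \Delta p_m(0)$. Assumption~\ref{ass:macro} then yields the pointwise bound $|A_k(s)| \leq \alpha \eps^2 + \beta \eps g_{k-1}(s)$, and using disjointness of the intervals together with the classical heat-kernel estimate $g_0(s) \leq C(1+s)^{-1/2}$ one concludes that $\sum_k \int_{t_k^\#}^{t_k^*}|A_k(s)| \dint{s} \leq \alpha \tau_\fin + C\beta \eps \sqrt{t_\fin} \leq C$, which is of strictly smaller order than the claim. For the essential part, a second summation by parts gives the key identity $\sum_m \dot g_{k-m}(s-t_l^*) \varrho_{m-l} = \sum_m g_{k-m}(s-t_l^*) \Delta \varrho_{m-l}$; because $\Delta\varrho$ has zero total mass and bounded $\ell^1$-norm, a standard second-derivative heat-kernel estimate gives $|(g \ast \Delta\varrho)(j,t)| \leq C(1+t)^{-3/2}$ uniformly in $j$. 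The waiting-time bound $t_k^\# - t_l^* \geq 2(k-l) d_*/\eps$ from Corollary~\ref{cor:time-and-number-bounds} then yields $|B^{\ess}_k(s)| \leq C \sum_{n\geq 1}(1+2nd_*/\eps)^{-3/2} \leq C\eps^{3/2}$, so integrating in time and summing in $k$ produces
\[
\sum_{k=1}^{K_\eps}\int_{t_k^\#}^{t_k^*} |B^{\ess}_k(s)| \dint{s} \leq C \eps^{3/2} \, t_\fin \leq C/\sqrt{\eps},
\]
which is precisely the order claimed in the lemma.

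The main obstacle is the negligible piece $B^{\neg}_k$, because it involves the unknown quantities $D_l$ themselves. Using $\|\dot g(t)\|_\infty \leq C(1+t)^{-3/2}$ together with $\|\tilde r^{(l)}\|_{\ell^1} \leq C D_l$ and integrating in $s$, one obtains $\int_{t_k^\#}^{t_k^*}|B^{\neg}_k(s)| \dint{s} \leq C\sum_{l<k} D_l (1+t_k^\# - t_l^*)^{-1/2}$. Summing over $k$ and applying the waiting-time spacing once more yields
\[
\sum_{k=1}^{K_\eps}\int_{t_k^\#}^{t_k^*} |B^{\neg}_k(s)| \dint{s} \leq C\sum_{l=1}^{K_\eps} D_l \sum_{n=1}^{K_\eps}(1+2nd_*/\eps)^{-1/2} \leq C_* \sum_{l=1}^{K_\eps} D_l,
\]
where the inner sum is uniformly bounded thanks to $\eps K_\eps \leq C$. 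To close this Gronwall-type inequality I would either refine the constants so that $C_* < 1$ and absorb the right-hand side directly, or run a discrete Gronwall induction on $k$ that exploits the bounded total mass of the coupling kernel $(1+(k-l)\cdot 2d_*/\eps)^{-1/2}$. Once the absorption is justified, the three contributions combine to $\sum_k D_k \leq C + C/\sqrt{\eps} + C_*\sum_k D_k$, which rearranges to the claim.
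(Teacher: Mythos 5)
Your overall strategy coincides with the paper's: differentiate the representation formula \eqref{eq:q-k-formula}, split $\dot q^{(k)}_k$ into an initial-data contribution and contributions from earlier phase transitions, estimate each by heat-kernel decay, integrate over $(t_k^\#,t_k^*)$, and sum over $k$ to arrive at an inequality for $\sum_k D_k$ that can be closed by absorption. Your treatment of $A_k$ is exactly the paper's, and your further splitting of the past-fluctuation term into the impact-profile part $B^{\ess}_k$ (which you correctly estimate as $O(1/\sqrt{\eps})$ after summation) and the residual $B^{\neg}_k$ is a valid refinement of the paper's direct use of $\|r^{(l)}(t_l^*)\|_{\ell^1}\leq C(1+D_l)$ from Lemma~\ref{lem:loc-passage}; both routes yield the same $O(1/\sqrt\eps)$ order for the $D_l$-free part.

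The genuine gap is in your handling of $B^{\neg}_k$, and it is not a matter of ``refining constants.'' By first bounding $\int_{t_k^\#}^{t_k^*}(1+s-t_l^*)^{-3/2}\,\dint s \leq 2(1+t_k^\#-t_l^*)^{-1/2}$ for \emph{each} $k>l$ and then summing, you replace each finite integral by the full tail $\int_{t_k^\#}^{\infty}$, and those tails overlap massively. This yields $\sum_{k>l}\int_{t_k^\#}^{t_k^*}(1+t-t_l^*)^{-3/2}\,\dint t \leq C\sum_{n\geq1}(1+2nd_*/\eps)^{-1/2}$, which is $O(1)$ --- a constant $C_*$ that depends on $\tau_\fin/d_*$ and on the constant from Lemma~\ref{lem:loc-passage}, with no mechanism forcing $C_*<1$. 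A discrete Gronwall iteration with a kernel whose $\ell^1$-mass is $O(1)$ does not help either; it would at best reproduce the same unabsorbable constant. The paper avoids the loss by using the pairwise disjointness of the intervals $(t_k^\#,t_k^*)_{k>l}$, all contained in $[t_{l+1}^\#,\infty)$, to bound the \emph{sum} of the integrals directly,
\begin{equation*}
  \sum_{k>l}\int_{t_k^\#}^{t_k^*}\frac{\dint t}{(1+t-t_l^*)^{3/2}}
  \;\leq\;
  \int_{t_{l+1}^\#}^{\infty}\frac{\dint t}{(1+t-t_l^*)^{3/2}}
  \;=\;
  \frac{2}{(1+t_{l+1}^\#-t_l^*)^{1/2}}
  \;\leq\;
  C\sqrt{\eps},
\end{equation*}
which produces the vanishing coefficient $C\sqrt{\eps}$ in front of $\sum_l D_l$ and makes the absorption legitimate for small $\eps$. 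Replacing your per-$k$ tail bound by this single-integral bound closes the argument; without it, your estimate does not establish the lemma.
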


\begin{proof}
  By \eqref{eq:q-k-formula} we have
  \begin{equation}
    \label{eq:dot-q-k}
   \BMHD \dot q^{(k)}_k\at{t}\EMHC
    =
    \sum_{n \in \bbN} \dot g_{k-n}(t) p_n(0)
    -
    \sum_{l=1}^{k-1} \sum_{n \in \bbZ} \dot g_{k-n}(t-t_l^*) r^{(l)}_n(t_l^*)
  \end{equation}
  for all $t \in (t_k^\#,t_k^*)$, and due to
  Assumption \ref{ass:macro} we can estimate the contribution
  from the initial data by 
  \begin{equation*}
    \left| \sum_{n \in \bbZ} \dot g_{k-n}\at{t} p_n(0) \right|
    =
    \left| \sum_{n \in \bbZ} g_{k-n} \at{t}\laplace p_n(0) \right|
    \leq
    C\at{ \alpha \eps^2 + \frac{\beta \eps}{(1+t)^{1/2}}}
  \end{equation*}
  because the discrete heat kernel $g$ from \eqref{Eqn:HeatKernel} is
  nonnegative and satisfies $\sum_{j\in\Zset}g_j\at{t}=1$ as well as
  $\sup_{j\in\Zset} g_j\at{t}\leq C\at{1+t}^{-1/2}$. Moreover,
  the contributions from the previous phase transitions
  $l=1,\ldots,k-1$ satisfy
  \begin{align*}
    \left| \sum_{n \in \bbZ} \dot g_{k-n}(t-t_l^*) r^{(l)}_n(t_l^*) \right|
    \leq
    \| \dot g(t-t_l^*) \|_{\ell^\infty}
    \sum_{n \in \bbZ} | r^{(l)}_n(t_l^*) |
    \leq
    C \frac{1+D_l}{(1+t-t_l^*)^{3/2}}
  \end{align*}
  thanks to Lemma \ref{lem:loc-passage} and
  $\|\dot g_j(s)\|_{\ell^\infty} \leq - \dot g_0(s) \leq C/(1+s)^{3/2}$.
  Combining these estimates \BMHD with \eqref{Eqn.FA.DefDk} and \EMHD integrating \eqref{eq:dot-q-k}
  we thus find
  \begin{equation}
    \label{eq:D-k-est}
    D_k
    \leq
    \int\limits_{t_k^\#}^{t_k^*} \alpha \eps^2 + \frac{\beta \eps}{(1+t)^{1/2}}
    \,\dint{t}
    +
    C \sum_{l=1}^{k-1} \int\limits_{t_k^\#}^{t_k^*} \frac{1+D_l}{(1+t-t_l^*)^{3/2}}
    \,\dint{t}.
  \end{equation}
  Summing over all phase transitions in $[0,t_\fin]$, we estimate the
  first integral in \eqref{eq:D-k-est} by
  \begin{align}
  \label{eq:D-k-1}
    \sum_{k=1}^{K_\eps}
    \int\limits_{t_k^\#}^{t_k^*} \alpha \eps^2 + \frac{\beta \eps}{(1+t)^{1/2}}
    \,\dint{t}
    &\leq
    \int\limits_{0}^{t_{\fin}} \alpha \eps^2 + \frac{\beta \eps}{(1+t)^{1/2}}
    \,\dint{t}
    \leq
    \alpha \tau_{\fin} + 2 \beta \sqrt{\eps^2+\tau_{\fin}}
    \leq C
  \end{align}
  and the second one by
  \begin{align}
  \label{eq:D-k-2}
	\begin{split}
    \sum_{k=1}^{K_\eps}
    \sum_{l=1}^{k-1} \int\limits_{t_k^\#}^{t_k^*} \frac{1+D_l}{(1+t-t_l^*)^{3/2}}
    \,\dint{t}
    &=
    \sum_{l=1}^{K_\eps} (1+D_l) \sum_{k=l+1}^{K_\eps}
    \int\limits_{t_k^\#}^{t_k^*} \frac{\dint{t}}{(1+t-t_l^*)^{3/2}}
    \\
    &\leq
    \sum_{l=1}^{K_\eps} (1+D_l)
    \int\limits_{t_{l+1}^\#}^{\infty} \frac{\dint{t}}{(1+t-t_l^*)^{3/2}}
    \\
    &\leq
    2 \sum_{l=1}^{K_\eps} \frac{1+D_l}{(1+t_{l+1}^\#-t_l^*)^{1/2}}.
  \end{split}
  \end{align}
  Moreover, Corollary \ref{cor:time-and-number-bounds} provides
  $(1+t_{l+1}^\#-t_l^*)^{-1/2} \leq C \sqrt{\eps}$. \BMHD Adding the partial estimates \eqref{eq:D-k-1} and \eqref{eq:D-k-2} we thus arrive at \EMHC 
  \begin{equation*}
    \sum_{k=1}^{K_\eps} D_k
    \leq
    C \bigg( 1 + \sqrt{\eps} K_\eps + \sqrt{\eps} \sum_{k=1}^{K_\eps} D_k
    \bigg),
  \end{equation*}
  \BMHD and the thesis follows by rearranging
  terms since Corollary \ref{cor:time-and-number-bounds} ensures that $K_\eps \leq C/\eps$. \EMHC  
\end{proof}

As a consequence of Lemma \ref{lem:D_k-upper-bound}, we obtain an
upper bound for the sum of all negligible fluctuations.

\begin{corollary}[Uniform $\ell^1$-bound for all negligible fluctuations]
  \label{cor:bounds-neg-fluct}
  We have
  \begin{equation}
  \label{eq:bounds-neg-fluct.Eqn1}
    \sup_{0\leq t \leq t_\fin}\sum_{j \in \bbZ} \sum_{k=1}^{K_\eps}
    | r^{(k)}_{\neg,j}(t) |
    \leq
    \frac{C}{\sqrt{\eps}}
  \end{equation}
  for some constant $C$ and all sufficiently small $\eps>0$.
\end{corollary}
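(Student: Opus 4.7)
Fix $t \in [0,t_\fin]$. I would split the phase-transition indices into three groups according to where $t$ lies relative to the time intervals $[t_k^\#,t_k^*]$:
\begin{enumerate}
\item \emph{Future indices} with $t<t_k^\#$: by \eqref{Eqn:FA.DefFluctuations} and \eqref{Eqn:DefEssFluct} both $r^{(k)}$ and $r^{(k)}_\ess$ vanish, so $r^{(k)}_{\neg,j}(t)=0$.
\item \emph{Past indices} with $t\geq t_k^*$: both $r^{(k)}$ and $r^{(k)}_\ess$ satisfy the discrete heat equation on $[t_k^*,\infty)$, so their difference $r^{(k)}_\neg$ does the same with initial data $r^{(k)}(t_k^*)-\varrho_{\cdot-k}$ at time $t_k^*$. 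Because the discrete heat semigroup is $\ell^1$-contractive (the kernel $g(s)$ is nonnegative with $\sum_n g_n(s)=1$), estimate \eqref{eq:loc-passage2} yields $\sum_{j\in\bbZ}|r^{(k)}_{\neg,j}(t)|\leq CD_k$.
\item \emph{Active index} with $t_k^\#\leq t<t_k^*$: by Proposition \ref{pro:existence} there is \emph{at most one} such index $k_0$, since the spinodal intervals $[t_k^\#,t_k^*]$ are pairwise disjoint. For this $k_0$ we have $r^{(k_0)}_\ess(t)=0$ thanks to the indicator in \eqref{Eqn:DefEssFluct}, so $r^{(k_0)}_\neg(t)=r^{(k_0)}(t)$, and the estimates \eqref{eq:loc-excursions1} and \eqref{eq:loc-passage1} combine to $\sum_{j\in\bbZ}|r^{(k_0)}_{\neg,j}(t)|\leq C(1+D_{k_0})$.
\end{enumerate}

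Summing these contributions and using $k\leq K_\eps$ throughout, I would obtain
\begin{equation*}
\sum_{j\in\bbZ}\sum_{k=1}^{K_\eps}|r^{(k)}_{\neg,j}(t)|
\leq C(1+D_{k_0}) + C\sum_{k=1}^{K_\eps}D_k
\leq C\Big(1+\sum_{k=1}^{K_\eps}D_k\Big),
\end{equation*}
where the active term (if it exists) is absorbed into the global sum since $D_{k_0}\geq 0$. Invoking Lemma \ref{lem:D_k-upper-bound} now yields the claimed bound $C/\sqrt{\eps}$, and the right-hand side is independent of $t$, so taking the supremum over $t\in[0,t_\fin]$ costs nothing.

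The only potential obstacle is the handling of the active $k_0$, whose $\ell^1$-norm grows with $D_{k_0}$ rather than being of order $D_{k_0}$; however since only one such index exists at each time, its contribution is controlled by the same global sum $\sum_k D_k$ that bounds the past indices, so the argument closes without any further refinement. All other ingredients are standard properties of the discrete heat kernel combined with the local fluctuation estimates already established in \S\ref{sec:local-fluct-estim}.
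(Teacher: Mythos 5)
Your proposal is correct and takes essentially the same approach as the paper: for fixed $t$, partition the indices $k$ into future (contribution zero), active (at most one index, controlled via \eqref{eq:loc-excursions1} and \eqref{eq:loc-passage1}), and past (controlled via $\ell^1$-contractivity of the heat semigroup applied to \eqref{eq:loc-passage2}), then sum and apply Lemma~\ref{lem:D_k-upper-bound}. The paper organizes the same ideas around the single index $l$ determined by the single-interface property and distinguishes whether $t\in[t_l^\#,t_l^*)$ or $t\in[t_l^*,t_{l+1}^\#)$, but the content of the argument is the same.
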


\begin{proof}
  Fix $t \in [0,t_\fin]$ and note that if $t \leq t_1^\#$ then there
  are no fluctuations at all and the claim is trivially true at
  $t$. Otherwise the single-interface property from Proposition
  \ref{pro:existence} provides exactly one $l \in \{1,\ldots,K_\eps\}$
  such that
  \begin{equation*}
    \text{either}\quad
    t \in [t_l^\#,t_l^*)
    \qquad\text{or}\qquad
    t \in [t_l^*,t_{l+1}^\#),
  \end{equation*}
  where $t_{K_\eps+1}^\#$ may be larger than $t_{\fin}$ or even
  infinite. In the first case we \BMHD have 
\begin{align*}
r^{(l)}_\neg\at{t}=r^{(l)}\at{t},\qquad\qquad r^{(k)}_\neg\at{t}=0\quad \text{for $k>l$}
\end{align*}
according to the definitions in \eqref{Eqn:DefEssFluct} and \eqref{Eqn:DefNegFluct}, and using \EMHC the local fluctuation 
  estimates from Lemmas \ref{lem:loc-excursions} and
  \ref{lem:loc-passage} we find \EMHC 
  \begin{align*}
    \sum_{j \in \bbZ} \sum_{k=1}^{K_\eps} | r^{(k)}_{\neg,j}(t) |
    &\leq
    \sum_{j \in \bbZ} | r^{(l)}_{\neg,j}(t) |
    +
    \sum_{j \in \bbZ} \sum_{k=1}^{l-1} | r^{(k)}_{\neg,j}(t) |
    =
    \sum_{j \in \bbZ} | r^{(l)}_{j}(t) |
    +
    \sum_{j \in \bbZ} \sum_{k=1}^{l-1} | r^{(k)}_j(t) - r^{(k)}_{\ess,j}(t) |
    \\
    &\leq
    C (1+D_l)
    +
    \sum_{j \in \bbZ} \sum_{k=1}^{l-1} \sum_{n \in \bbZ}
    g_{j-n}(t-t_k^*) \left| r_n^{(k)}(t_k^*) -
    \varrho_{n-k} \right|
    \\
    &\leq
    C (1+D_l)
    +
    C \sum_{k=1}^{l-1} D_k.
  \end{align*}
  \BMHD The discussion of the second case $t \in [t_l^*,t_{l+1}^\#)$ is even simpler since the contributions for $k=l$ and $k<l$ can be bounded in the same way. In particular, arguing as above we find \EMHD
  \begin{equation*}
    \sum_{j \in \bbZ} \sum_{k=1}^{K_\eps} | r^{(k)}_{\neg,j}(t) |
    \leq
    \sum_{j \in \bbZ} \sum_{k=1}^{l} | r^{(k)}_j(t)-r^{(k)}_{\ess,j}(t) |
    \leq
    C \sum_{k=1}^l D_k,
  \end{equation*}
  \BMHD and the claim follows in both cases from Lemma
  \ref{lem:D_k-upper-bound}.\EMHD
\end{proof}

Notice that the superposition of all essential fluctuations satisfies
\begin{equation*}
  \sum_{j \in \bbZ} \sum_{k=1}^{K_\eps} r^{(k)}_{\ess,j}(t)
  =
  2\max \set[k]{t_k^* \leq t}
\end{equation*}
since we have $\sum_{j\in\Zset}\varrho_j=2$ and because the
convolution with the discrete heat kernel $g$ preserves mass as well
as positivity. Consequently, the sum of all essential fluctuations is
of order $1/\eps$ and hence larger than the right hand side in
\eqref{eq:bounds-neg-fluct.Eqn1},  provided that the
interface propagates on the macroscopic scale. \BMHD In other words, the negligible fluctuations are in fact smaller than the essential ones. \EMHD

We further \BMHD emphasize \EMHC that we are not able to estimate the number of
spinodal excursions or their duration.  Corollary
\ref{cor:bounds-neg-fluct}, however, controls the impact of the
corresponding fluctuations even in the worst-case-scenario that a
single particle is either always inside the spinodal region or enters
and leaves it repeatedly over a very long period of time. More
precisely, combining the estimate \eqref{eq:bounds-neg-fluct.Eqn1}
with the scaling \eqref{Eqn:Scaling} we show in
\S\ref{sect:compactness} that the sum of all negligible fluctuations
is small in the macroscopic $\fspaceL^1$-norm and confirm in this way
that spinodal excursions are not related to proper phase transitions
and do not drive the interface in the macroscopic free boundary
problem \eqref{Eqn:LimitBulkDiff}--\eqref{Eqn:LimitFlowRule}.

\subsection{Regularity of fluctuations} \label{sec:regul-fluct}

\begin{figure}
  \centering
  \includegraphics[width=.95\textwidth]{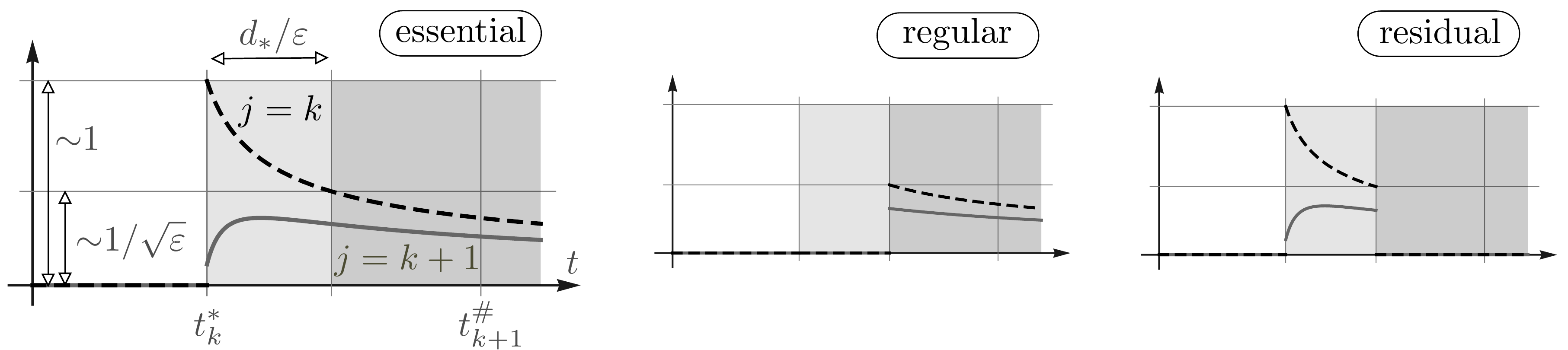}
  \caption{Cartoon of the essential fluctuations $r^{(k)}_{\ess,j}$
    and the corresponding regular and residual ones, see
    \eqref{Eqn:DefRegAndResFluct}, depicted as functions of $t$ for
    $j=k$ (black, dashed) and $j=k+1$ (gray, solid). The
    shaded boxes indicate the different life spans.}
  \label{fig:splitting}
\end{figure}

A fundamental ingredient for passing to the macroscopic limit in
\S\ref{sec:justification} is to ensure that the superposition of all
fluctuations converges to a continuous function.
The essential fluctuations $r^{(k)}_{\ess,k}$, however, are
discontinuous in time as they jump at every $t_k^*$, see Figure
\ref{fig:splitting}. To overcome this problem we observe that the
lower bound for the waiting time guarantees that the diffusion
effectively regularizes $r^{(k)}_{\ess}$ in the time between $t^*_k$
and $t^\#_{k+1}$. We therefore split the latter into two parts and
denote by
\begin{align}
  \label{Eqn:DefRegAndResFluct}
  r^{(k)}_{\reg,j}(t)
  :=
  r^{(k)}_{\ess,j}(t)\chi_{[t^*_k+d_*/\eps,t_\fin)}\at{t}
  \qquad\text{and}\qquad
  r^{(k)}_{\res,j}(t)
  :=
  r^{(k)}_{\ess,j}(t)\chi_{[t^*_k,t^*_k+d_*/\eps)}\at{t}
\end{align}
the $k$-th \emph{regular} and \emph{residual} fluctuations,
respectively, where $d_*$ is the constant from Corollary
\ref{cor:time-and-number-bounds}. The regular fluctuations are still
discontinuous in time but it turns out that the jumps are small and
disappear as $\eps \to 0$. On the other hand, the sum of all residual
fluctuations is very irregular but the Lebesgue measure of its domain
of definition becomes small under the scaling \eqref{Eqn:Scaling}.

\begin{lemma}[Uniform $\ell^1$-bound for residual fluctuations]
  \label{lem:bounds-res-fluct}
  We have
  \begin{equation*}
    \sup\limits_{0\leq t\leq t_\fin}
    \sum_{j \in \bbZ} \sum_{k=1}^{K_\eps} |r^{(k)}_{\res,j}(t)|
    \leq
    C
  \end{equation*}
  for some constant $C$ and all sufficiently small $\eps>0$.
\end{lemma}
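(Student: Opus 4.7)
The proof is short and rests on two observations: the supports of the residual fluctuations in time are pairwise disjoint, and on its support each residual fluctuation has mass exactly equal to that of the impact profile.

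First I would fix $t \in [0,t_\fin]$ and determine the set of indices $k \in \{1,\ldots,K_\eps\}$ for which $r^{(k)}_{\res}(t)$ is nonzero. By definition \eqref{Eqn:DefRegAndResFluct}, this happens exactly when $t \in [t_k^*, t_k^*+d_*/\eps)$. Corollary \ref{cor:time-and-number-bounds} gives the waiting-time bound $t_{k+1}^* - t_k^* \geq 2 d_*/\eps$, so consecutive residual supports are separated by at least $d_*/\eps$ of empty time. In particular, the intervals $[t_k^*, t_k^*+d_*/\eps)$ are pairwise disjoint, and there is at most one index $k$ such that $r^{(k)}_{\res}(t) \neq 0$.

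For that one index $k$ (if it exists), I would bound the $\ell^1$-norm directly using the explicit formula \eqref{Eqn:DefEssFluct}. Since the discrete heat kernel $g$ is nonnegative and satisfies $\sum_{m \in \bbZ} g_m(s) = 1$ for every $s \geq 0$, Fubini yields
\begin{equation*}
  \sum_{j \in \bbZ} |r^{(k)}_{\res,j}(t)|
  =
  \sum_{j \in \bbZ} \Bigl| \sum_{n \in \bbZ} g_{j-n}(t-t_k^*)\, \varrho_{n-k} \Bigr|
  \leq
  \sum_{n \in \bbZ} \varrho_{n-k} \sum_{j \in \bbZ} g_{j-n}(t-t_k^*)
  =
  \sum_{n \in \bbZ} \varrho_n
  =
  2,
\end{equation*}
where the last equality uses $\sum_n \varrho_n = 2$ as noted after \eqref{Eqn:ImpactProfile}.

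Combining the two observations, for every $t \in [0,t_\fin]$ we obtain
\begin{equation*}
  \sum_{j \in \bbZ} \sum_{k=1}^{K_\eps} |r^{(k)}_{\res,j}(t)| \leq 2,
\end{equation*}
which proves the lemma with $C=2$. There is no genuine obstacle here — the only thing to verify is the disjointness of the residual time windows, which is exactly the content of the waiting-time estimate from Corollary \ref{cor:time-and-number-bounds}, and this is precisely why the factor $d_*/\eps$ (rather than a larger threshold) was chosen in \eqref{Eqn:DefRegAndResFluct}.
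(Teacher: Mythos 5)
Your proposal is correct and follows essentially the same argument as the paper: disjointness of the intervals $[t_k^*, t_k^*+d_*/\eps)$ from the waiting-time bound in Corollary \ref{cor:time-and-number-bounds}, then mass conservation of the discrete heat kernel and $\sum_j \varrho_j = 2$ to bound the single surviving term. Your version simply spells out the steps the paper compresses into a sentence.
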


\begin{proof}
  By Corollary \ref{cor:time-and-number-bounds} the intervals
  $[t_k^*,t_k^*+d_*/\eps]$ are mutually disjoint and we conclude
  that for any $t$ only one $k$ contributes to the double
  sum. Combining this \BMHD with \eqref{Eqn:DefEssFluct}, \eqref{Eqn:DefRegAndResFluct} and the mass
  conservation of the discrete heat equation we find \EMHD 
  \begin{equation*}
    \sum_{j \in \bbZ} \sum_{k=1}^{K_\eps} |r^{(k)}_{\res,j}(t)|
    =
    \sup_{1\leq k\leq K_\eps}\sum_{j \in \bbZ}  |r^{(k)}_{\res,j}(t)|
    \leq
    C
  \end{equation*}
  with $C := \sum_{j\in\Zset}\varrho_j$.
\end{proof}

The key result of this section is the following lemma, which shows
that the regular fluctuations are H\"older continuous up to a small
error that vanishes in the limit $\eps \to 0$.

\begin{lemma}[H\"older estimates for regular fluctuations]
  \label{Lem:Hoelder}
  There exists a constant $C$, which depends on $\kappa$ and $d_*$
  such that
  \begin{align}
    \label{Lem:Hoelder.Eqn1}
    \left|
      \sum_{k=1}^{K_\eps} r_{\reg,j_2}^{(k)}(t_2)
      - \sum_{k=1}^{K_\eps} r_{\reg,j_1}^{(k)}(t_1)
    \right|
    \leq
    C\eps^{1/2}
    \at{|t_2-t_1|^{1/4}+|j_2-j_1|^{1/2}} + C \eps^{1/2}
  \end{align}
  holds for any $j_1, j_2 \in \Zset$ and all $0 \leq t_1, t_2 \leq
 t_\fin$.
\end{lemma}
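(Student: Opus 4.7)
The plan is to exploit the convolution structure of the regular fluctuations, combine pointwise decay bounds on the discrete heat kernel $g$ with the waiting-time lower bound from Corollary \ref{cor:time-and-number-bounds}, and close the estimate via a Riemann-sum argument. Setting $F_j(t) := \sum_{k=1}^{K_\eps} r_{\reg,j}^{(k)}(t)$, the definitions \eqref{Eqn:DefEssFluct} and \eqref{Eqn:DefRegAndResFluct} give
\begin{equation*}
F_j(t) = \sum_{k=1}^{K_\eps} G_{j-k}(t-t_k^*)\,\chi_{[t_k^* + d_*/\eps,\,t_\fin)}(t),
\qquad
G_j(s) := \sum_{n \in \bbZ} g_{j-n}(s)\,\varrho_n.
\end{equation*}
Since $\sum_n |\varrho_n| \leq C$, the standard estimates $\|g(s)\|_{\ell^\infty}\leq C(1+s)^{-1/2}$, $\|\nabla_+g(s)\|_{\ell^\infty}\leq C(1+s)^{-1}$, and $\|\dot g(s)\|_{\ell^\infty}\leq C(1+s)^{-3/2}$, interpolated against the trivial $L^\infty$-bound with exponents $1/2$ and $1/4$ respectively, deliver
\begin{align*}
|G_j(s)| &\leq C(1+s)^{-1/2},
\\
|G_{j_2}(s) - G_{j_1}(s)| &\leq C|j_2-j_1|^{1/2}(1+s)^{-3/4},
\\
|G_j(s_2) - G_j(s_1)| &\leq C|s_2-s_1|^{1/4}(1+s_1)^{-3/4}
\end{align*}
for $s_1 \leq s_2$. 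I will then decompose $F_{j_2}(t_2) - F_{j_1}(t_1)$ into the spatial increment $F_{j_2}(t_2) - F_{j_1}(t_2)$ and the temporal increment $F_{j_1}(t_2) - F_{j_1}(t_1)$.

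For the spatial increment each active summand is bounded by $C|j_2-j_1|^{1/2}(1+t_2-t_k^*)^{-3/4}$. The active indices form a set $\{1,\ldots,M\}$ with $M \leq K_\eps \leq C/\eps$, and the spacing $t_{k+1}^* - t_k^* \geq 2d_*/\eps$ gives $t_2 - t_{M-m}^* \geq (2m+1)d_*/\eps$ for $m=0,\ldots,M-1$. A Riemann-sum argument therefore yields
\begin{equation*}
\sum_{k=1}^M (1+t_2-t_k^*)^{-3/4} \leq C\eps^{3/4}\sum_{m=0}^{M-1}(2m+1)^{-3/4} \leq C\eps^{3/4}M^{1/4} \leq C\eps^{1/2},
\end{equation*}
so the spatial contribution is bounded by $C\eps^{1/2}|j_2-j_1|^{1/2}$. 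The very same summation device handles those $k$ for which the indicator $\chi_{[t_k^*+d_*/\eps,t_\fin)}$ equals $1$ on all of $[t_1,t_2]$: for this \emph{continuous} set $\{k : t_k^* + d_*/\eps \leq t_1\}$ the temporal Hölder estimate produces $\sum_k |t_2-t_1|^{1/4}(1+t_1-t_k^*)^{-3/4} \leq C\eps^{1/2}|t_2-t_1|^{1/4}$.

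The remaining \emph{activation} set $\{k : t_1 < t_k^* + d_*/\eps \leq t_2\}$ captures the jumps of $r_{\reg}^{(k)}$ at the activation times $t_k^*+d_*/\eps$ and is the main obstacle of the argument. For each such $k$ the contribution $|G_{j_1-k}(t_2-t_k^*)| \leq C(1+d_*/\eps)^{-1/2} \leq C\eps^{1/2}$ is already small, but I must still sum over the at most $M' \leq \eps|t_2-t_1|/(2d_*)+1$ active indices. The same spacing reasoning, now with exponent $1/2$ instead of $3/4$, delivers
\begin{equation*}
\sum_{k \in \text{activation set}}(1+t_2-t_k^*)^{-1/2} \leq C\eps^{1/2}(M')^{1/2} \leq C\eps|t_2-t_1|^{1/2}+C\eps^{1/2}.
\end{equation*}
The decisive step is then to invoke the macroscopic time horizon $t_\fin = \tau_\fin/\eps^2$, which forces $\eps^{1/2}|t_2-t_1|^{1/4}\leq C$ and hence $\eps|t_2-t_1|^{1/2}=\bigl(\eps^{1/2}|t_2-t_1|^{1/4}\bigr)^2 \leq C\eps^{1/2}|t_2-t_1|^{1/4}$. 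The additive residue $C\eps^{1/2}$ on the right-hand side of \eqref{Lem:Hoelder.Eqn1} reflects precisely the single-activation scenario, and combining all contributions proves the lemma. The only nontrivial balance in the entire argument is the one between jump amplitude $\mathcal{O}(\eps^{1/2})$ and jump count $\mathcal{O}(\eps|t_2-t_1|)$ in the activation part, which is controlled exactly through the macroscopic constraint $t_2-t_1\leq t_\fin$.
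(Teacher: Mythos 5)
Your proposal is correct and follows essentially the same route as the paper's proof: the split into spatial and temporal increments, the further splitting of the temporal part into the already-active set and the activation (jump) set, the Riemann-sum device based on the waiting-time bound from Corollary \ref{cor:time-and-number-bounds}, and the final absorption of $\eps|t_2-t_1|^{1/2}$ via $t_\fin=\tau_\fin/\eps^2$ all coincide with the argument in the paper. The only cosmetic difference is that you derive the H\"older-type kernel bounds for $G$ by interpolating the decay estimates for $g$, $\nabla_+g$, $\dot g$, whereas the paper invokes the combined estimate \eqref{Lem:Hoelder.PEqn1} directly.
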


\begin{proof}
  Elementary arguments reveal that the discrete heat kernel satisfies
  \begin{equation}
    \label{Lem:Hoelder.PEqn1}
    \babs{g_{j_2}\at{t_2}-g_{j_1}\at{t_1}}
    \leq
    \frac{C}{\at{1+\min\{t_1,t_2\}}^{3/4}}
    \bat{\abs{t_2-t_1}^{1/4}+\abs{j_2-j_1}^{1/2}},
  \end{equation}
  see for instance \cite[Appendix]{HeHe13} for the details. In what follows we
  denote the argument of the modulus on left hand side of
  \eqref{Lem:Hoelder.Eqn1} by $D(t_1,t_2,j_1,j_2)$ and \BMHD study the cases $j_1=j_2$ and $t_1=t_2$ separately. The general result is then a consequence of the triangle inequality. \EMHD

  \par
  \underline{\emph{Spatial regularity}}:
  For $t_1=t_2=t$, inequality \eqref{Lem:Hoelder.PEqn1} along with
  \eqref{Eqn:DefEssFluct} and \eqref{Eqn:DefRegAndResFluct} implies
  \begin{align*}
    \babs{D(t,t,j_1,j_2)}
    &\leq
      \sum_{k:\, t_k^* \leq t-d_*/\eps}
      \sum_{n\in\Zset}
      \varrho_{n-k}
      \babs{g_{j_2-n}(t-t_k^*)-g_{j_1-n}(t-t_k^*)}
    \\
    &\leq
      C |j_2-j_1|^{1/2}
      \sum_{k:\, t_k^* \leq t-d_*/\eps}
      \frac{1}{(1+t-t_k^*)^{3/4}}
  \end{align*}
  with $\varrho$ as in \eqref{Eqn:ImpactProfile}. Moreover, \BMHD the lower bound for the waiting time in Corollary
  \ref{cor:time-and-number-bounds} guarantees that all phase transition times are sufficiently separated from each other, and hence also that \EMHC
  \begin{align*}
    \#\big\{k: t^*_k<t-d_*/\eps\big\}
    \leq
    \left\lfloor\frac{\eps t}{2d_* }\right\rfloor,
  \end{align*}
  where $\lfloor\cdot\rfloor$ denotes the floor
    function. \BMHD As illustrated in Figure~\ref{fig:PastTimes}, we can therefore estimate \EMHC
  \begin{align}
    \label{Lem:Hoelder.PEqn2}
    \sum_{k:\, t_k^* \leq t-d_*/\eps}
    \frac{1}{(1+t-t_k^*)^{3/4}}
    \leq
    {\BMHD C\EMHD}\sum_{{\BMHC l \EMHC}=1}^{\lfloor\eps t/\at{ 2d_* }\rfloor}
    \left( \frac{\eps}{{\BMHC l \EMHC} d_*} \right)^{3/4}
    =
    C \eps t^{1/4}
    \leq
    C \tau_\fin^{1/4}\eps^{1/2}\BMHD \leq C\eps^{1/2},
  \end{align}
 \BMHD where we interpreted the sum as a discretized Riemann integral, and obtain via \EMHC
  \begin{equation*}
    |D(t,t,j_1,j_2)|
    \leq
    C\eps^{1/2} |j_2-j_1|^{1/2},
  \end{equation*}
  \BMHD the claim \eqref{Lem:Hoelder.Eqn1} in the first case.\EMHC

  \par
  \underline{\emph{Temporal regularity}}:
\BMHD Supposing $j_1=j_2=j$ and $t_1<t_2$, we \EMHC write
  \begin{equation*}
    D(t_1,t_2,j,j)
    =
    D_1(t_1,t_2,j) + D_2(t_1,t_2,j),
  \end{equation*}
  where
  \begin{equation*}
    D_1(t_1,t_2,j)
    =
    \sum_{k:\, t_k^*+d_*/\eps<t_1}
    \sum_{n\in\Zset}
    \varrho_{n-k}
    \Big( g_{j-n}(t_2-t_k^*)-g_{j-n}(t_1-t_k^*) \Big)
  \end{equation*}
  and
  \begin{equation*}
    D_2(t_1,t_2,j)
    =
    \sum_{k:\, t_1 \leq t_k^* + d_*/\eps < t_2}
    \sum_{n\in\Zset}
    \varrho_{n-k}
    g_{j-n}(t_2-t_k^*)
  \end{equation*}
  account for the phase transitions \BMHD that occur in the intervals
  $[0,t_1]$ and $[t_1,t_2]$, respectively. To estimate the first term, we employ
  \eqref{Lem:Hoelder.PEqn1} and Corollary
  \ref{cor:time-and-number-bounds} as in the above discussion and infer \EMHD that
  \begin{equation*}
    |D_1(t_1,t_2,j)|
    \leq 
    C |t_2-t_1|^{1/4}
    \sum_{k:\, t_k^*+d_*/\eps<t_1}
    \frac{1}{(1+t_1-t_k^*)^{3/4}}
    \leq
    C  \eps^{1/2} |t_2-t_1|^{1/4}.
  \end{equation*}
  Moreover, the decay $g_j(t) \leq C/(1+t)^{1/2}$ for all $j \in \bbZ$
  and $t \geq 0$ yields
  \begin{equation*}
    |D_2(t_1,t_2,j)|
    \leq
    \sum_{k:\, t_1 \leq t_k^*+d_*/\eps < t_2}
    \frac{C}{(1+t_2-t_k^*)^{1/2}},
  \end{equation*}
  and Corollary \ref{cor:time-and-number-bounds} combined with
  $\abs{t_2-t_1}\leq t_\fin=\tau_\fin/\eps^2$ allows us to estimate
  \begin{align*}
    |D_2(t_1,t_2,j)|
    &\leq 
    \sum_{{\BMHC l \EMHC}=1}^{\lceil \eps (t_2-t_1)/\at{2d_*  } \rceil}
    \frac{C{\eps}^{1/2}}{\D({\BMHC l \EMHC} d_*)^{1/2}}
    \\
    &\leq
    C \eps |t_2-t_1|^{1/2} + C \eps^{1/2}
    \leq C \eps^{1/2} |t_2-t_1|^{1/4} + C \eps^{1/2},
  \end{align*}
  where $\lceil\cdot\rceil$ denotes the ceiling
    function.
\end{proof}

\begin{figure}
  \centering
  \includegraphics[width=.55\textwidth]{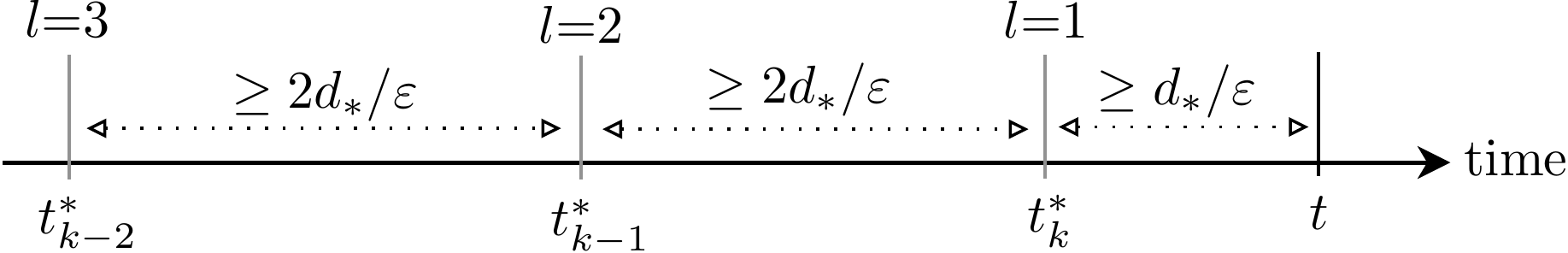}
  \caption{\BMHD To control the regularity of the fluctuations in the proof of Lemma \ref{Lem:Hoelder}, 
we look from a given time $t$ backward and label the past phase transitions in reversed order by the index $l$.\EMHD }
  \label{fig:PastTimes}
\end{figure}

As a consequence of Lemma \ref{Lem:Hoelder} we obtain the following
bound for the regular fluctuations.

\begin{corollary}[$\ell^\infty$-bound for all fluctuations]
  \label{cor:infty-bounds-all-fluct}
  There exists a constant $C$ such that
  \begin{equation*}
    \sup_{t\in[0,t_\fin]}
    \sup_{j\in\Zset}
    \at{
      \babs{\sum_{k=1}^{K_\eps}r^{(k)}_{\reg,j}\at{t}}
      +\babs{\sum_{k=1}^{K_\eps}r^{(k)}_{\res,j}\at{t}}
      +\babs{\sum_{k=1}^{K_\eps}r^{(k)}_{\neg,j}\at{t}}}
    \leq C
  \end{equation*}
  for all sufficiently small $\eps>0$.
\end{corollary}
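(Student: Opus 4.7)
The plan is to handle the three kinds of fluctuations separately, in increasing order of difficulty. For the residual fluctuations the assertion follows immediately from Lemma \ref{lem:bounds-res-fluct} via the trivial inequality $\sup_{j\in\Zset}|\sum_{k=1}^{K_\eps}r^{(k)}_{\res,j}(t)|\leq \sum_{j\in\Zset}\sum_{k=1}^{K_\eps}|r^{(k)}_{\res,j}(t)|\leq C$. For the regular fluctuations I would exploit the explicit formula \eqref{Eqn:DefEssFluct} together with the normalization $\sum_{n\in\Zset}\varrho_{n-k}=2$ and the $\ell^\infty$-decay $\|g(s)\|_{\ell^\infty}\leq C/\sqrt{1+s}$ of the discrete heat kernel. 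This yields the pointwise estimate $|r^{(k)}_{\reg,j}(t)|\leq C/\sqrt{1+t-t^*_k}$ for those $k$ with $t^*_k+d_*/\eps\leq t$, and to sum over $k$ I would enumerate the admissible indices in reverse chronological order via $l:=K(t)-k$, where $K(t)$ denotes the largest such $k$. The waiting time bound from Corollary \ref{cor:time-and-number-bounds} gives $t-t^*_k\geq (1+2l)d_*/\eps$, so a Riemann-sum argument analogous to \eqref{Lem:Hoelder.PEqn2} provides
\begin{equation*}
\sum_{k=1}^{K_\eps}|r^{(k)}_{\reg,j}(t)|
\leq
C\sum_{l=0}^{K_\eps}\frac{1}{\sqrt{1+(1+2l)d_*/\eps}}
\leq
C\sqrt{\eps}\Big(1+\sum_{l=1}^{K_\eps}l^{-1/2}\Big)
\leq
C\sqrt{\eps K_\eps}
\leq
C,
\end{equation*}
where the last inequality uses $K_\eps\leq C/\eps$ from the same corollary.

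For the negligible fluctuations the naive $\ell^1$-bound from Corollary \ref{cor:bounds-neg-fluct} only scales like $\eps^{-1/2}$ and is thus insufficient. I would instead combine the two preceding estimates with the representation formula \eqref{eq:p-from-idata-and-fluctuation}, which gives $\sum_{k\geq 1}r^{(k)}_j(t)=\sum_{n\in\Zset}g_{j-n}(t)p_n(0)-p_j(t)$. The right-hand side is uniformly bounded in $j$ and $t$ thanks to the $\ell^\infty$-bound on $p(0)$ from Assumption \ref{ass:macro}, the mass-and-positivity-preserving character of the discrete heat equation, and \eqref{eq:bounds-on-u}. By the single-interface property from Proposition \ref{pro:existence} the only index $k>K_\eps$ that can contribute a non-zero $r^{(k)}(t)$ for $t\leq t_\fin$ is $k=K_\eps+1$, and the corresponding $\ell^\infty$-norm is controlled by \eqref{eq:unif-bound-fluct}; hence $\sum_{k=1}^{K_\eps}r^{(k)}_j(t)$ is also uniformly bounded. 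Writing $r^{(k)}_{\neg,j}=r^{(k)}_j-r^{(k)}_{\reg,j}-r^{(k)}_{\res,j}$ and applying the triangle inequality with the previous two bounds then completes the argument.

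The only nontrivial step is the summation in the regular fluctuations estimate, but this uses precisely the same ingredients (heat-kernel $\ell^\infty$-decay plus the waiting time lower bound) as the spatial regularity estimate in Lemma \ref{Lem:Hoelder}; it is essentially a rerun of \eqref{Lem:Hoelder.PEqn2} with exponent $1/2$ in place of $3/4$, so no genuinely new obstacle arises.
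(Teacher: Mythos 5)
Your proof is correct and aligns with the paper's argument in all three components. The treatment of the residual fluctuations (via Lemma \ref{lem:bounds-res-fluct}) and of the negligible fluctuations (via the representation formula \eqref{eq:p-from-idata-and-fluctuation} together with the triangle inequality) is exactly what the paper does, and your explicit remark that for $t\le t_\fin$ only the single extra index $k=K_\eps+1$ can contribute to $\sum_{k\ge1}r^{(k)}$ — controlled by \eqref{eq:unif-bound-fluct} — is a welcome piece of bookkeeping that the paper leaves implicit. The one place you diverge is the bound on the regular fluctuations: the paper simply invokes Lemma \ref{Lem:Hoelder} with $t_2=t$, $t_1=0$, $j_1=j_2=j$, uses that the regular sum vanishes at $t=0$, and then observes $t^{1/4}\le t_\fin^{1/4}=\tau_\fin^{1/4}\eps^{-1/2}$ to cancel the $\eps^{1/2}$ prefactor. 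You instead rerun the summation directly from the $\ell^\infty$-decay $\|g(s)\|_{\ell^\infty}\le C(1+s)^{-1/2}$ combined with the waiting-time lower bound, obtaining the same conclusion with exponent $1/2$ in place of the $3/4$ that appears in \eqref{Lem:Hoelder.PEqn2}. Both routes use identical ingredients; yours is slightly more self-contained and perhaps more transparent, while the paper's is shorter because it reuses an estimate already on the shelf.
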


\begin{proof}
  The claimed estimate for the residual fluctuations is a direct
  consequence of Lemma \ref{lem:bounds-res-fluct} while the bound for
  the regular fluctuations follows from Lemma \ref{Lem:Hoelder} with
  $t_2=t$, $t_1=0$, $j_1=j_2=j$ and due to \BMHD
  $t^{1/4}\leq t_\fin^{1/4}=\tau_\fin^{1/4}\eps^{-1/2}$. Moreover, the
representation formula \eqref{eq:p-from-idata-and-fluctuation} implies
\begin{align*}
  \sup_{t\in[0,t_\fin]}
    \sup_{j\in\Zset}\babs{\sum_{k=1}^{K_\eps}r^{(k)}_{j}\at{t}}\leq C
\end{align*}
thanks to Proposition \ref{pro:existence}, Assumption \ref{ass:macro},  and the maximum principle  for diffusion equations. The assertion for the negligible fluctuations thus follows from
  $r^{(k)}_{\neg,j}\at{t} = r^{(k)}\at{t} - r^{(k)}_{\reg,j}\at{t}
  -r^{(k)}_{\res,j}\at{t} $,
  which is provided by \eqref{Eqn:DefEssFluct}, \eqref{Eqn:DefNegFluct}, and
  \eqref{Eqn:DefRegAndResFluct}. \EMHC
\end{proof}

We conclude this section with an estimate for the
  spatial gradient of the regular fluctuations. To begin with, setting
  $j_1=j$, $j_2=j+1$ and $t_1=t_2=t$ in \eqref{Lem:Hoelder.Eqn1}
  provides
  \begin{equation*}
    \left|
      \sum_{k=1}^{K_\eps} \nabla_+ r_{\reg,j}^{(k)}(t)
    \right|
    \leq
    C \eps^{1/2},
  \end{equation*}
  so the corresponding macroscopic gradient is bounded pointwise in
  space and time by $\eps^{-1/2}$ but not by some quantity of order
  $1$. The following result, however, \BMHD establishes an improved $\ell^2$-estimate which \EMHC
enables us to pass to the macroscopic limit pointwise in time.

\begin{lemma}[$\ell^2$-bound for the gradient of regular fluctuations]
  \label{lem:bound-grad-reg-fluct}
  We have
  \begin{equation*}
    \sup_{0\leq t\leq t_\fin} \sum_{j \in \bbZ}
    \bigg| \sum_{k=1}^{K_\eps} \nabla_+ r^{(k)}_{\reg,j}(t)
    \bigg|^2 
    \leq
    C \eps 
  \end{equation*}
  for some constant $C$ and all sufficiently small $\eps>0$.
\end{lemma}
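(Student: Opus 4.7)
My starting point is the explicit convolution structure
\begin{equation*}
  r^{(k)}_{\reg,j}(t) = \chi_{[t^*_k+d_*/\eps,\, t_\fin)}(t)\,\bigl(g(t-t^*_k)\ast\varrho_{\cdot-k}\bigr)_j,
\end{equation*}
inherited from \eqref{Eqn:DefEssFluct} and \eqref{Eqn:DefRegAndResFluct}, which implies
\begin{equation*}
  \nabla_+ r^{(k)}_{\reg,j}(t) = \chi_{[t^*_k+d_*/\eps,\, t_\fin)}(t)\,\bigl(\nabla_+ g(t-t^*_k)\ast\varrho_{\cdot-k}\bigr)_j.
\end{equation*}
I would combine three ingredients: (i) a sharp $\ell^2$-decay estimate for the gradient of the discrete heat kernel, (ii) Young's convolution inequality together with $\varrho \in \ell^1(\Zset)$, and (iii) the waiting-time separation from Corollary~\ref{cor:time-and-number-bounds}.

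For (i), Parseval's identity on $\Zset$ yields
\begin{equation*}
  \|\nabla_+ g(s)\|_{\ell^2(\Zset)}^2 = \int_{-\pi}^{\pi} 4\sin^2(\theta/2)\, \mhexp{-8 s \sin^2(\theta/2)}\, \tfrac{\ddiff\theta}{2\pi},
\end{equation*}
and the substitution $\theta \mapsto \sigma/\sqrt{s}$ together with $\sin^2(\theta/2)\geq C\theta^2$ on $|\theta|\leq\pi$ gives $\|\nabla_+ g(s)\|_{\ell^2(\Zset)} \leq C(1+s)^{-3/4}$. For (ii), Young's inequality combined with the shift-invariance of $\|\varrho\|_{\ell^1}=2$ then yields
\begin{equation*}
  \bigl\|\nabla_+ g(t-t^*_k)\ast\varrho_{\cdot-k}\bigr\|_{\ell^2(\Zset)} \leq C (1+t-t^*_k)^{-3/4}.
\end{equation*}

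For (iii), I apply the triangle inequality in $\ell^2(\Zset)$ to the finite sum over $k$ and enumerate the contributing phase transitions backwards in time by $l=1,2,\ldots$, starting from the most recent one. Corollary~\ref{cor:time-and-number-bounds} (cf.\ Figure~\ref{fig:PastTimes}) guarantees $t-t^*_k\geq l\cdot (2d_*/\eps)$ and at most $\lfloor\eps t/(2d_*)\rfloor$ relevant indices, so
\begin{equation*}
  \bigg\|\sum_{k=1}^{K_\eps}\nabla_+ r^{(k)}_{\reg,\cdot}(t)\bigg\|_{\ell^2(\Zset)} \leq C\sum_{l=1}^{\lfloor \eps t/(2d_*)\rfloor} (l/\eps)^{-3/4} \leq C \eps^{3/4}(\eps t)^{1/4} \leq C \eps\, t^{1/4},
\end{equation*}
where I used the elementary partial-sum estimate $\sum_{l=1}^L l^{-3/4}\leq C L^{1/4}$. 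Since $t\leq t_\fin=\tau_\fin/\eps^2$ we obtain $t^{1/4}\leq C\eps^{-1/2}$, so the right-hand side is $C\eps^{1/2}$, and squaring delivers the stated bound.

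The structure of the argument mirrors the one used in Lemma~\ref{Lem:Hoelder} where $\ell^\infty$-decay rates of the heat kernel are summed over past phase transitions with an $\eps$-spaced Riemann-sum estimate. The only new analytic input is the sharper $\ell^2$-decay $\|\nabla_+ g(s)\|_{\ell^2}\leq C(1+s)^{-3/4}$, which is a standard Fourier computation, so I do not anticipate any genuine obstacle beyond the careful bookkeeping of these estimates.
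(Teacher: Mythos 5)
Your proposal is correct and follows essentially the same route as the paper: $\ell^2$-decay of $\nabla_+ g$, Young's inequality for the convolution with $\varrho$, the triangle inequality in $\ell^2$, and the waiting-time separation from Corollary~\ref{cor:time-and-number-bounds} summed via the same Riemann-sum device used in Lemma~\ref{Lem:Hoelder}. The only cosmetic difference is that you derive the heat-kernel estimate $\|\nabla_+ g(s)\|_{\ell^2}\leq C(1+s)^{-3/4}$ directly via Parseval, whereas the paper simply cites the appendix of \cite{HeHe13}.
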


\begin{proof}
  The gradient of the discrete heat kernel satisfies
  \begin{equation*}
    \sum_{j \in \bbZ}
    |\nabla_+ g_j(t)|^2
    \leq
    C(1+t)^{-3/2},
  \end{equation*}
  for all $t \geq 0$, see for instance \cite[Appendix]{HeHe13}, \BMHD and
   \eqref{Eqn:DefEssFluct}, \eqref{Eqn:DefRegAndResFluct} ensure 
\begin{align*}
\nabla_+r^{(k)}_{\reg,j}(t)=
    \sum_{n \in \bbZ}
      \varrho_{n-k}\nabla_+ g_{j-n}(t-t_k^*)
  \end{align*}
  for any $k$, all $j$, and every $t$ with $t_k^*+d_*/\eps \leq t$. Young's inequality for convolutions implies via $\norm{\varrho \ast \cdot}_2\leq\norm{\varrho}_1\norm{\cdot}_2$ the estimate
  \begin{align*}
    \bigg(\sum_{j \in \bbZ} \big| \nabla_+  r^{(k)}_{\reg,j}(t)\big|^2\bigg)^{1/2}
    \leq
    \frac{C}{(1+t-t_k^*)^{3/4}},
  \end{align*}
  and as in the proof of Lemma
  \ref{Lem:Hoelder} -- see \eqref{Lem:Hoelder.PEqn2} and Figure~\ref{fig:PastTimes} -- we deduce
  \begin{equation*}
    \sum_{k=1}^{K_\eps} \bigg( \sum_{j \in \bbZ}
    \big|\nabla_+ r^{(k)}_{\reg,j}(t)\big|^2 \bigg)^{1/2}
    \leq
    C \sum_{{\BMHC l \EMHC}=1}^{\lfloor\eps t/\at{ 2d_*  }\rfloor}
    \left( \frac{\eps}{d_* {\BMHC l \EMHC}} \right)^{3/4}
    \leq
    C \eps^{1/2}.
  \end{equation*}
  The assertion is now a direct consequence of the triangle inequality.\EMHD
\end{proof}

\section{Justification of the hysteretic free boundary problem}
\label{sec:justification}

In order to pass to the macroscopic limit, \BMHC we choose a scaling
parameter $0<\eps\ll1$ and regard the lattice data as continuous functions in the macroscopic time $\tau$ that are piecewise constant with respect to the macroscopic space variable as they depend only on the integer part of $\xi/\eps$. More precisely, in accordance with 
\eqref{Eqn:Scaling} we write 
\begin{align}
\label{Eqn:DefIntegerPart}
\xi=\eps\at{j_\xi+\zeta_\xi}\qquad\text{ with}\qquad j_\xi\in\Zset,\quad\zeta_\xi\in \ocinterval{-1/2}{+1/2},
\end{align}
and define
\begin{equation*}
  P_\eps(\tau,\,\xi)
  :=
  p_{j_\xi}(\tau/\eps^2),
  \qquad
  R_{\reg,\,\eps}(\tau,\xi)
  :=
  \sum_{k \geq 1} r^{(k)}_{\reg,\,j_\xi}(\tau/\eps^2).
\end{equation*}
Furthermore, by similar formulas we construct functions $U_\eps$, $R_{\res,\eps}$, and $R_{\neg,\eps}$ from their microscopic counterparts, and setting
\begin{equation*}
  Q_\eps(\tau,\xi)
  :=
  \sum_{n \in \bbN} g_{j_\xi-n}(\tau/\eps^2)p_n(0)
\end{equation*}
we infer from \eqref{eq:p-from-idata-and-fluctuation} the identity\EMHC
\begin{equation}
\label{eq:FormulaForP}
  P_{\eps}
  =
  Q_\eps - \left( R_{\reg,\eps} + R_{\neg,\eps} + R_{\res,\eps} \right).
\end{equation}
Finally, \BMHC we introduce \EMHC two discrete analogs to the macroscopic interface
curve via
\begin{equation*}
  \Xi^*_\eps(\tau)
  :=
  \eps \sum_{k \geq 1} k \chi_{[t_{k-1}^*,t_k^*)}(\tau/\eps^2),
  \qquad
  \Xi^\#_\eps(\tau)
  :=
  \eps \sum_{k \geq 1} k \chi_{[t_{k-1}^\#,t_k^\#)}(\tau/\eps^2)
\end{equation*}
and approximate the macroscopic phase field by
\begin{align}
  \label{Eqn:DefMEps}
  M_\eps \pair{\tau}{\xi}
  :=
  \begin{cases}
    -1 & \text{if } \xi > \Xi^\#_\eps\at{\tau}, \\
    +1 & \text{if } \xi < \Xi^*_\eps\at{\tau}, \\    
    0  & \text{otherwise};
  \end{cases}
\end{align}
see Figure \ref{fig:interface} for an illustration.

\begin{figure}
  \centering
  \includegraphics[width=.45\textwidth]{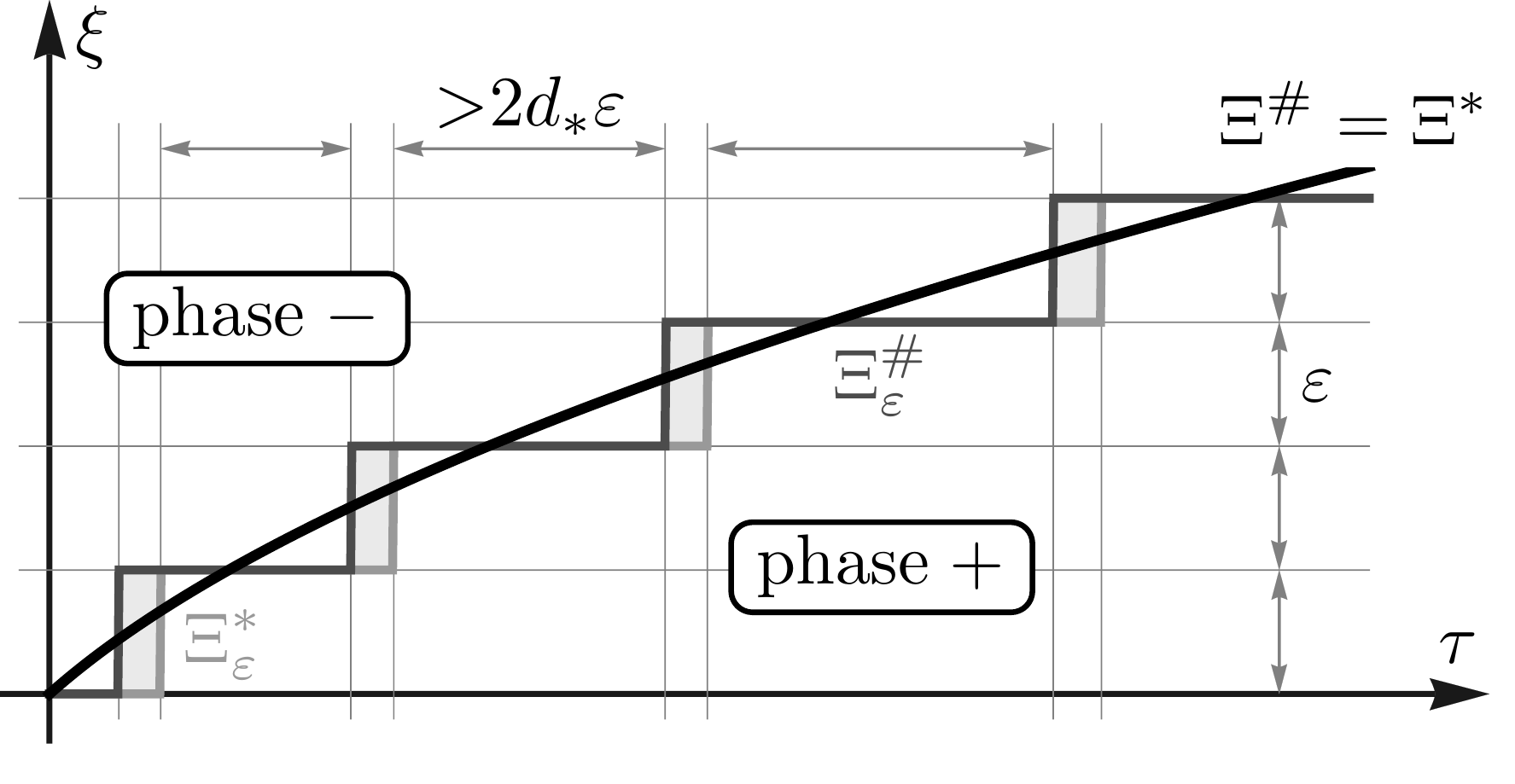}
  \caption{Cartoon of the macroscopic phase interface, both on the
    discrete level (piecewise constant graphs $\Xi_\eps^\#$ and
    $\Xi_\eps^*$ in dark and light gray, respectively) and in the
    continuum limit (black curve $\Xi$). All spinodal passages and
    excursions take place inside the shaded region, whose macroscopic
    area is bounded from above by $\eps\tau_\fin$ and typically of
    order $\eps^2\abs{\ln\eps}$.}
  \label{fig:interface}
\end{figure}

\subsection{Compactness results}\label{sect:compactness}

Our first result concerns the compactness of the scaled lattice data
and extends the arguments for the bilinear case $\ka=\infty$ from
\cite{HeHe13}.

\begin{proposition}[Compactness]
  \label{pro:compactness}
  Under Assumption \ref{ass:macro} there exist (not relabeled)
  sequences such that the following statements are \BMHC satisfed for 
  $\eps\to0$: \EMHC
  \begin{enumerate}
  \item \emph{(convergence of interfaces)} We have
    \begin{align}
      \label{pro:compactness.Eqn1}
      \babs{\Gamma_\eps}\to0
      \quad \text{where} \quad
      \Gamma_\eps
      :=
      \big\{(\tau,\xi) :
     \BMHC  \Xi^*_\eps\at\tau \leq \xi\leq \Xi^\#_\eps\at\tau,\;\EMHC
      0\leq\tau\leq\tau_\fin\big\},
    \end{align}
    and both $\Xi^\#_\eps$ and $\Xi_\eps^*$ converge strongly in
    $\fspaceL^\infty([0,\tau_\fin])$ to the same Lipschitz function
    $\Xi$.

  \item \emph{(strong convergence of fields)} There exist bounded
    functions $U$, $P$, and $M$ such that
    \begin{align}
      \label{pro:compactness.Eqn2}
      U_\eps\to U,
      \qquad 
      P_\eps\to P,
      \qquad 
      M_\eps \to M
      \qquad \text{strongly in} \qquad
      \fspaceL^s_\loc\bat{[0,\tau_\fin] \times \Rset}
    \end{align}
    for any $1\leq s<\infty$. Moreover, $P$ is locally
    H\"older-continuous in space and time on
    $[0,\tau_\fin] \times \Rset$ and we have
    $P_\eps(\tau,\cdot) \to P(\tau,\cdot)$ strongly in
    $\fspaceL^s_\loc\at\Rset$ for any
    $\tau\in[0,\tau_\fin]$.

  \item \emph{(weak convergence of spatial derivatives)} $P$ admits
    the weak derivative $\partial_\xi P$ for any
    $\tau \in [0,\tau_\fin]$ and we have
    \begin{align}
      \label{pro:compactness.Eqn3}
      \nabla_{+\eps} P_\eps \to \partial_\xi P
      \qquad \text{weakly in} \qquad
      \fspaceL^2_\loc\bat{[0,\tau_\fin] \times \Rset}, 
    \end{align}
    where $\nabla_{+\eps}$ denotes the right-sided difference
    approximation of $\partial_\xi$ on $\eps\Zset$.
  \end{enumerate}
  Here, $0<\tau_\fin<\infty$ denotes a fixed macroscopic time that is
  independent of $\eps$.
\end{proposition}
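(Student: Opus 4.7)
\emph{Interface convergence.} The plan is to combine the decomposition \eqref{eq:FormulaForP} with the fluctuation estimates of \S\ref{sect:fluctuations} and the entropy inequality \eqref{Prop:Entropy.Eqn2}; the four assertions are treated largely independently. The waiting-time bound of Corollary \ref{cor:time-and-number-bounds} states that consecutive phase-transition times are separated by at least $2d_\ast\eps$ macroscopic time units, and since each such transition advances $\Xi_\eps^*$ by one lattice spacing $\eps$, the nondecreasing step functions $\Xi_\eps^*$ are $(2d_\ast)^{-1}$-Lipschitz up to a defect of size $\eps$. Helly's selection theorem, or Arzel\`a--Ascoli applied to a piecewise-linear interpolant, extracts a subsequence with $\Xi_\eps^*\to\Xi$ uniformly on $[0,\tau_\fin]$ to a Lipschitz $\Xi$. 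The single-interface property from Proposition \ref{pro:existence} forces $\Xi_\eps^\#(\tau)-\Xi_\eps^*(\tau)\in\{0,\eps\}$ pointwise, so $\Xi_\eps^\#\to\Xi$ as well and $\babs{\Ga_\eps}\leq\eps\,\tau_\fin\to0$.

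\emph{Strong convergence of the fields.} Assumption \ref{ass:macro} implies that $Q_\eps$ converges in $\fspaceL^s_\loc$ to the continuum heat evolution of $P_\ini$. For the fluctuation contributions I interpolate the $\ell^\infty$-bound of Corollary \ref{cor:infty-bounds-all-fluct} with the $\ell^1$-bounds of Corollary \ref{cor:bounds-neg-fluct} and Lemma \ref{lem:bounds-res-fluct}; converting the lattice $\ell^1$-norm into a macroscopic $\fspaceL^1$-norm costs an extra factor $\eps$, yielding $\norm{R_{\neg,\eps}(\tau,\cdot)}_{\fspaceL^1}=\mathcal{O}(\sqrt\eps)$ and $\norm{R_{\res,\eps}(\tau,\cdot)}_{\fspaceL^1}=\mathcal{O}(\eps)$, which together with the $\fspaceL^\infty$-bound forces both to vanish in $\fspaceL^s_\loc$. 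For $R_{\reg,\eps}$, Lemma \ref{Lem:Hoelder} translates under the parabolic rescaling into joint H\"older equicontinuity in $(\tau,\xi)$ on compacts up to an $\mathcal{O}(\eps^{1/2})$ defect, so Arzel\`a--Ascoli extracts a uniformly convergent subsequence $R_{\reg,\eps}\to R_\reg$ with $R_\reg$ locally H\"older. Inserting all pieces into \eqref{eq:FormulaForP} gives $P_\eps\to P$ in $\fspaceL^s_\loc$ with $P$ locally H\"older, and the same argument at fixed $\tau$ gives the pointwise-in-time spatial convergence. The interface convergence then forces $M_\eps\to M:=\sgn\bat{\Xi(\tau)-\xi}$ almost everywhere, and dominated convergence yields the $\fspaceL^s_\loc$ convergence; finally, the explicit form \eqref{eq:phi_prime} of $\Phi^\prime$ gives $U_\eps=P_\eps+M_\eps$ outside $\Ga_\eps$, where $U_\eps$ is uniformly bounded by \eqref{eq:bounds-on-u}, so $\babs{\Ga_\eps}\to0$ implies $U_\eps\to U:=P+M$ in $\fspaceL^s_\loc$.

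\emph{Weak convergence of the gradient.} Applying \eqref{Prop:Entropy.Eqn2} with $\eta=\Phi$, $\mu=\mathrm{id}$ and a smooth nonnegative cut-off $\psi$ that equals $1$ on a prescribed macroscopic interval and whose discrete gradient is of order $\eps$, then absorbing the cross term by Young's inequality using Assumption \ref{ass:macro}, yields
\begin{equation*}
\sum_{j\in\Zset}\int_0^{t_\fin}\psi_j\bat{\nabla_+p_j(s)}^2\,\ddiff{s}\leq C/\eps.
\end{equation*}
After the parabolic rescaling \eqref{Eqn:Scaling} the left-hand side is exactly the squared macroscopic $\fspaceL^2$-norm of $\nabla_{+\eps}P_\eps$ on $[0,\tau_\fin]\times K$, so $\nabla_{+\eps}P_\eps$ is uniformly bounded in $\fspaceL^2_\loc$ and admits a weak subsequential limit $G$. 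The strong $\fspaceL^2_\loc$ convergence $P_\eps\to P$ identifies $G=\partial_\xi P$ in the distributional sense. For the pointwise-in-$\tau$ statement I read \eqref{eq:FormulaForP} at fixed $\tau$: $Q_\eps(\tau,\cdot)$ is smooth, $\nabla_{+\eps}R_{\reg,\eps}(\tau,\cdot)$ is uniformly bounded in $\fspaceL^2_\loc$ by Lemma \ref{lem:bound-grad-reg-fluct}, and the $\fspaceL^s_\loc$-decay of $R_{\neg,\eps}(\tau,\cdot)$ and $R_{\res,\eps}(\tau,\cdot)$ sends their distributional gradients to zero, so $P(\tau,\cdot)\in\fspaceW^{1,2}_\loc(\Rset)$ for every $\tau$.

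The main technical obstacle is realising all four convergences on a single subsequence and securing the pointwise-in-$\tau$ statements at every, and not merely almost every, $\tau\in[0,\tau_\fin]$. This will be resolved by first performing the compactness extractions via Helly and Arzel\`a--Ascoli, then exploiting the uniform joint H\"older regularity of $R_{\reg,\eps}$ from Lemma \ref{Lem:Hoelder} to promote pointwise-in-$\tau$ bounds into genuine convergence of continuous-in-$\tau$ objects, while the $\tau$-by-$\tau$ weak gradient identification is handled term by term via the decomposition \eqref{eq:FormulaForP}.
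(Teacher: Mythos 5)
Your argument follows essentially the same decomposition-and-estimate route as the paper's proof: the waiting-time bound from Corollary~\ref{cor:time-and-number-bounds} gives (approximate) Lipschitz compactness of $\Xi^*_\eps$ and $\Xi^\#_\eps$; the $\ell^1$-bounds from Corollary~\ref{cor:bounds-neg-fluct} and Lemma~\ref{lem:bounds-res-fluct}, interpolated against the uniform $\ell^\infty$-bound of Corollary~\ref{cor:infty-bounds-all-fluct}, kill the negligible and residual parts in $\fspaceL^s_\loc$; the H\"older estimate of Lemma~\ref{Lem:Hoelder} gives Arzel\`a--Ascoli compactness for $R_{\reg,\eps}$; heat-kernel estimates handle $Q_\eps$; and \eqref{eq:FormulaForP} ties the field convergences together. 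All of this mirrors the paper.

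The one step that does not close as written is the weak $\fspaceL^2$-bound on $\nabla_{+\eps}P_\eps$. You apply the energy balance \eqref{Prop:Entropy.Eqn2} with a generic smooth compactly supported cutoff $\psi$ and assert the cross term is absorbed by Young's inequality. With such a $\psi$ this fails: Young's inequality with weight $\psi$ leaves a remainder proportional to $|\nabla_{+\eps}\psi|^2/\psi$, which is unbounded near $\partial\,\supp\psi$ where $\psi$ vanishes, so the term cannot be integrated. Knowing that the microscopic discrete gradient $\nabla_+\psi_j$ is $O(\eps)$ is necessary but not sufficient for the absorption. The paper circumvents this by choosing the exponentially decaying weight $\Psi_\eps(\xi)=\exp(-\lambda\eps|j_\xi|)$, which enjoys the crucial multiplicative estimate $|\nabla_{+\eps}\Psi_\eps|\leq C\lambda\bat{\Psi_\eps+\chi_{\ccinterval{-\eps/2}{+\eps/2}}}$; then the cross term is bounded by $C\lambda\int\Psi_\eps|\nabla_{+\eps}P_\eps|$ and absorbed. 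A compactly supported cutoff would also work if you take $\psi=\phi^2$ for a smooth compactly supported $\phi$ so that $|\nabla\psi|^2/\psi\approx 4|\nabla\phi|^2$ stays bounded, but this must be said explicitly. Finally, Assumption~\ref{ass:macro} is not the right ingredient for the absorption itself; it enters only through the bound on the initial-energy term $\sum_j\psi_j\Phi\bat{u_j(0)}$ and, together with \eqref{eq:bounds-on-u}, through the uniform bound on $|p_j|$.
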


\begin{proof}
  \emph{\ul{Interface curve}}: The Lebesgue measure of $\Ga_\eps$ can
  be estimated by
  \begin{align}
    \label{pro:compactness.PEqn1}
    \babs{\Ga_\eps}\leq \eps\tau_\fin
  \end{align} 
  because Proposition \ref{pro:existence} ensures that for each time
  $\tau$ there is at most one particle inside the spinodal
  region. Moreover, the jumps of both \BMHD $\Xi_\eps^*$ and $\Xi_\eps^\#$ \EMHD are always
  of size $\eps$ and the time between two jumps is bounded from below
  by $2 d_* \eps$ due to Corollary \ref{cor:time-and-number-bounds};
  see Figure \ref{fig:interface} for an illustration. By approximation
  with piecewise linear functions we thus deduce the strong compactness
  of both $\Xi^*_\eps$ and $\Xi^\#_\eps$ as well as the Lipschitz
  continuity of any accumulation point, see \cite[Lemma 3.9]{HeHe13}
  for the details. Finally, \eqref{pro:compactness.PEqn1} implies that \BMHC the \EMHC accumulations points of $\Xi_\eps^\#$ and $\Xi_\eps^*$ coincide.

  \par
  \emph{\ul{Negligible and residual fluctuations}}: For given $\tau$,
  Corollary \ref{cor:bounds-neg-fluct} and Lemma
  \ref{lem:bounds-res-fluct} yield
  \begin{equation*}
    \| R_{\neg,\eps}\pair{\tau}{\cdot} \|_{\fspaceL^1(\Rset)} \leq C\sqrt{\eps}
    \qquad\text{and}\qquad
    \| R_{\res,\eps}\pair{\cdot}{\tau} \|_{\fspaceL^1(\Rset)} \leq C\eps,
  \end{equation*}
  and Corollary \ref{cor:infty-bounds-all-fluct} provides 
  \begin{equation*}
    \| R_{\neg,\eps}\pair{\tau}{\cdot} \|_{\fspaceL^\infty(\Rset)}
    +
    \| R_{\res,\eps}\pair{\tau}{\cdot} \|_{\fspaceL^\infty(\Rset)}
    \leq
    C.
  \end{equation*}
  By H\"older's inequality \BMHC and interpolation \EMHC we thus find
  \begin{equation}
    \label{pro:compactness.PEqn2}
    R_{\neg,\eps}\pair{\tau}{\cdot} \to 0
    \qquad\text{and}\qquad
    R_{\res,\eps}\pair{\tau}{\cdot} \to 0
    \qquad\text{strongly in}\quad\fspaceL^s\at\Rset,
  \end{equation}
  as well as a corresponding convergence result in
  $\fspaceL^s\at{[0,\tau_\fin] \times \Rset}$.

  \par
  \emph{\ul{Essential fluctuations}}: From Lemma
  \ref{Lem:Hoelder} we infer the estimate
  \begin{equation*}
    | R_{\reg,\eps}(\tau_2,\xi_2) - R_{\reg,\eps}(\tau_1,\xi_1) |
    \leq
    C\left( |\tau_2-\tau_1|^{1/4} + |\xi_2-\xi_1|^{1/2} \right) + C\eps^{1/2}
  \end{equation*}
  and conclude that the piecewise constant function $R_{\reg,\eps}$ is
  almost H\"older continuous with small spatial jumps of order
  $\eps^{1/2}$. A variant of the Arzel\'a-Ascoli theorem -- see
  \cite[Lemma 3.10]{HeHe13} -- provides a H\"older continuous function
  $R$ along with a subsequence of $\eps\to0$ such that
  \begin{align*}
    R_{\reg,\eps}\to R
    \qquad\text{strongly in}\quad
    \fspaceL^\infty\bat{[0,\tau_\fin] \times \Rset}
  \end{align*}
  as well as
  \begin{align}
    \label{pro:compactness.PEqn3}
    R_{\reg,\eps}\pair{\tau}{\cdot}\to R\pair{\tau}{\cdot}
    \qquad\text{strongly in}\quad
    \fspaceL^\infty(\Rset)
  \end{align}
  for any given $\tau$.

  \par
  \emph{\ul{Other fields}}: The compactness of $Q_\eps$, which
  represent the scaled solutions of the discrete heat equation with
  macroscopic initial data as in Assumption \ref{ass:macro}, as well
  as the regularity of any accumulation point can be proven in many
  ways; see for instance \cite[Lemma 3.11]{HeHe13} for an approach via
  H\"older regularity. Extracting another subsequence we can therefore
  assume that
  \begin{align}
\label{pro:compactness.PEqn4a}
    Q_{\eps}\pair{\tau}{\cdot}\to Q\pair{\tau}{\cdot}
    \qquad\text{strongly in}\quad
    \fspaceL^s_\loc(\Rset)
  \end{align}
  and
  \begin{align}
\notag
    Q_{\eps}\to Q
    \qquad\text{strongly in}\quad
    \fspaceL^s_\loc\bat{[0,\tau_\fin] \times \Rset}
  \end{align}
  hold for some continuous function $Q$, and together \BMHD with \eqref{eq:FormulaForP},
  \eqref{pro:compactness.PEqn2}, and \EMHD \eqref{pro:compactness.PEqn3} we
  obtain the claimed convergence properties of $P_\eps$. Moreover,
  \eqref{Eqn:DefMEps} and \eqref{pro:compactness.Eqn1} imply \BMHD the \EMHD
  convergence of $M_\eps$.

  \par
  \emph{\ul{Spatial gradient}}: For fixed $\tau$, Lemma
  \ref{lem:bound-grad-reg-fluct} ensures that
  \begin{align*}
    \norm{\BMHC \nabla_{+\eps}\EMHC R_{\reg,\eps}\pair{\tau}{\cdot}}_{\fspaceL^2\at\Rset}
    \leq
    C,
  \end{align*}
  while Assumption \ref{ass:macro} combined with the properties of the
  discrete heat kernel guarantees
  \begin{align*}
    \norm{\BMHC \nabla_{+\eps}\EMHC Q_{\eps}\pair{\tau}{\cdot}}_{\fspaceL^2_{\BMHD \loc\EMHD}\at\Rset}
    \leq
    \alpha.
  \end{align*}
  \BMHC In particular,
  $\nabla_{+\eps}
  \bat{R_{\reg,\eps}\pair{\tau}{\cdot}+Q_\eps\pair{\tau}{\cdot}}$
  is weakly compact in $\fspaceL^2_{\BMHD \loc\EMHD}\at\Rset$ and any accumulation point $Z\pair{\tau}{\cdot}$ satisfies
\begin{align*}
\int\limits_\Rset Z\pair{\tau}{\xi}\Psi\at\xi\dint\xi=-\lim\limits_{\eps\to0}\int\limits_\Rset \bat{R_{\reg,\eps}\pair{\tau}{\xi}+Q_\eps\pair{\tau}{\xi}}\nabla_{-\eps}\Psi\at\xi\dint\xi=-\int\limits_\Rset P\pair{\tau}{\xi}\partial_\xi \Psi\at\xi\dint{\xi}
\end{align*}
thanks to \eqref{pro:compactness.PEqn2}--\eqref{pro:compactness.PEqn4a}, where $\Psi\in\fspaceC_c^\infty\bat{\bbR}$ is an arbitrary smooth test function and $\nabla_{-\eps}$ abbreviates the left-sided difference operator on $\eps\Zset$. This implies the existence of the weak derivative $\partial_\xi P \pair{\tau}{\cdot}\in\fspaceL^2_{\BMHD \loc\EMHD}\at\Rset$ for any $\tau$. \EMHC Towards \eqref{pro:compactness.Eqn3} we fix $\lambda>0$,
 \BMHC  define a nonnegative and piecewise constant function
  $\Psi_\eps\in\fspaceL^\infty\at\Rset$ in consistency with \eqref{Eqn:DefIntegerPart}  by
  \begin{align*}
 \Psi_\eps\at{\xi}:=\exp\bat{-\lambda\eps\abs{j_\xi}},
  \end{align*}
  and verify by direct computations \EMHC that
  \begin{align*}
    \babs{\BMHC \nabla_{+\eps}\EMHC \Psi_\eps\at\xi}
    \leq
    C\lambda\Bat{ \Psi_\eps\at\xi+\chi_{\ccinterval{-\eps/2}{+\eps/2}}\at\xi}.
  \end{align*}
  Evaluating Proposition \ref{Prop:Entropy} with
  $\psi_j=\Psi\at{\eps j}$ and inserting the scaling
  \eqref{Eqn:Scaling} we then find
  \begin{align*}
    \int\limits_0^{\tau_\fin} \int\limits_\Rset
    \Psi_\eps\bat{\BMHC \nabla_{+\eps}\EMHC P_\eps}^2\dint\xi\dint\tau
    &\leq 
      \int\limits_\Rset \Psi_\eps\, \Phi\bat{U_\eps} \,\dint\xi
      - 
      \int\limits_0^{\tau_\fin} \int\limits_\Rset
      P_\eps\bat{\BMHC \nabla_{+\eps}\EMHC \Psi_\eps}
      \bat{\BMHC \nabla_{+\eps}\EMHC P_\eps} \,\dint\xi \,\dint\tau
    \\
    &\leq 
      C
      + 
      C\int\limits_0^{\tau_\fin} \int\limits_\Rset
      \Psi_\eps \babs{\BMHC \nabla_{+\eps}\EMHC P_\eps} \,\dint\xi \,\dint\tau
    \\
    &\leq 
      C
      +
      C\int\limits_0^{\tau_\fin}\int\limits_\Rset
      \Psi_\eps \,\dint\xi \,\dint\tau
      +
      \frac{1}{2}\int\limits_0^{\tau_\fin}\int\limits_\Rset
      \Psi_\eps\bat{\BMHC \nabla_{+\eps}\EMHC P_\eps}^2 \,\dint\xi \,\dint\tau\,.
  \end{align*}
\BMHC Here, $C$ depends on $\lambda$ but not on
  $\eps$ and we omitted the arguments of the functions to ease the notation. Since $\lambda$ is arbitrary we conclude that
  $\nabla_{+\eps} P_\eps$ is weakly compact in
  $\fspaceL^2_\loc\bat{[0,\tau_\fin] \times \Rset}$. Moreover, any accumulation $Z$ point fulfills
\begin{align*}
\int\limits_0^{\tau_\fin}\int\limits_\Rset Z\,\Psi\,\dint\xi\dint\tau=
-\lim\limits_{\eps\to0}\int\limits_0^{\tau_\fin}\int\limits_\Rset P_\eps\nabla_{-\eps}\Psi\dint\xi\dint\tau=
\int\limits_0^{\tau_\fin}\int\limits_\Rset \partial_\xi P\,\Psi\,\dint\xi\dint\tau
\end{align*} 
for any test function $\Psi\in \fspaceC_c^\infty\bat{(0,\tau_\fin){\times}\bbR}$,  
so \eqref{pro:compactness.Eqn3} follows from the standard argument that compactness and uniqueness of accumulation points imply convergence, which holds also with respect to the weak topology in $\fspaceL^2_\loc$.\EMHC
\end{proof}

\subsection{Passage to the macroscopic limit}\label{sect:limit}

\BMHC Next we \EMHC derive the hysteretic free boundary problem from
Main Result \ref{res:Main} along converging sequences and justify the
hysteretic flow rule. In the bilinear case $\ka=\infty$, there exists
a straightforward argument based on the H\"older continuity of $P$
and the precise information on the microscopic phase transitions; see
\cite[proof of Theorem 3.6]{HeHe13}. In the trilinear case, however,
we have to argue in a more sophisticated way due to the lack of
vanishing $\ell^\infty$-bounds for the negligible fluctuations. In
what follows we therefore employ the notion of entropy solutions that
has been introduced in \cite{Plotnikov94,EvPo04} in the context of the
viscous regularization \eqref{Eqn:ViscousApp}.

\begin{figure}
  \centering
  \begin{minipage}[c]{.35\textwidth} %
    \includegraphics[width=\textwidth]{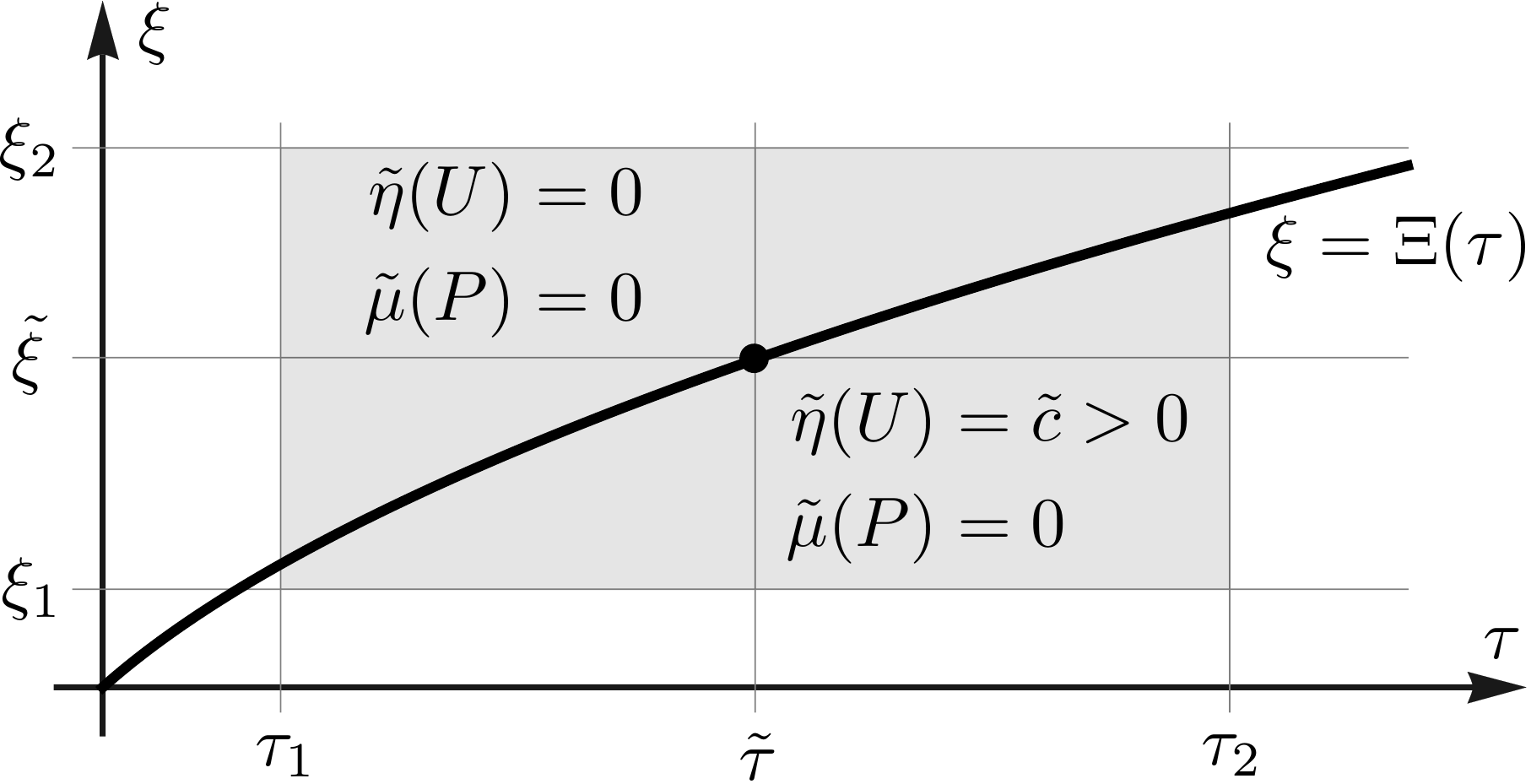} %
  \end{minipage} %
  \hspace{0.02\textwidth} %
  \begin{minipage}[c]{.57\textwidth} %
    \includegraphics[width=\textwidth]{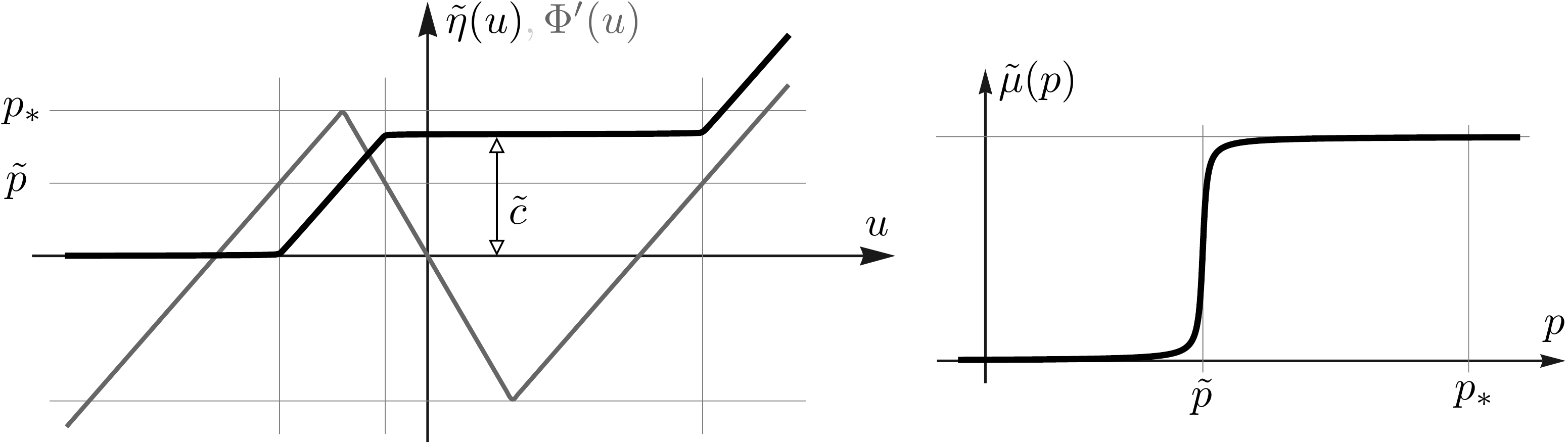} %
  \end{minipage} %
  \caption{\emph{Left panel.}  Illustration of the entropy argument in
    the proof of Theorem \ref{Thm:Limit}, which reveals that
    $P\npair{\tilde{\tau}}{\tilde{\xi}}<p_*$ implies
    $\tfrac{\dint{}}{\dint\tau}\Xi\at{\tilde{\tau}}=0$.  \emph{Center
      and right panel.} Smooth approximations of the entropy pair
    $\pair{\tilde{\eta}}{\tilde{\mu}}$ from
    \eqref{Eqn:SpecialEntropy}.}
  \label{fig:entropy}
\end{figure}

\begin{theorem}[Limit dynamics along sequences]
  \label{Thm:Limit}
  Any limit from Proposition \ref{pro:compactness} has the following
  properties\BMHC , where $\Om:=\ccinterval{0}{\tau_\fin}\times\Rset$ and $\Ga:=\{\pair{\tau}{\xi}\in\Om\,:\, \xi=\Xi\at\tau\}$:\EMHC
  \begin{enumerate}
  \item \emph{(free boundary problem with Stefan condition)}
    \BMHC $(P,\Xi)$ \EMHC is a distributional solution of
    \begin{equation}
	\label{Thm:Limit.Eqn1}
        \partial_\tau P = \partial^2_\xi P
        \quad\text{in}\quad \Omega\sm\BMHC \Ga\EMHC,
        \qquad
        \jump{P} = 0
        \quad\text{and}\quad
        2 \tderiv{\tau} \BMHC \Xi\EMHC  = \jump{\partial_\xi P}
        \quad\text{on}\quad \BMHC \Gamma\EMHC
    \end{equation}
    and attains the initial data $(P(0),0)$. \EMHC Moreover, we have
    \begin{align}
      \label{Thm:Limit.Eqn2}
      M(\tau,\xi)
      =
      \sgn \at{U(\tau,\xi)}
      =
      \sgn\bat{\Xi(\tau)-\xi}\quad \text{for}\quad \xi\neq\Xi\at\tau
    \end{align}
    as well as
    \begin{align}
      \label{Thm:Limit.Eqn3}
      P(\tau,\xi)
      \geq
      -p_*
      \quad \text{for}\quad \xi\leq\Xi\at\tau,
      \qquad
      P(\tau,\xi) \in [-p_*,+p_*]
      \quad \text{for}\quad \xi\geq\Xi\at\tau
    \end{align}
    and $\tfrac{\dint}{\dint\tau}\Xi\at\tau\geq0$ for almost all $\tau$. 

  \item \emph{(hysteretic flow rule and entropy balances)} The implication
    \begin{align}
      \label{Thm:Limit.Eqn5}
      P \big( \tau, \Xi\at{\tau}\big) < p_*
      \quad\implies\quad \tfrac{\dint}{\dint\tau}\Xi\at\tau=0
    \end{align}
    holds for almost all $\tau$ and the entropy inequality
    \begin{align}
      \label{Thm:Limit.Eqn4}
      \partial_\tau \eta\at{U}-\partial_\xi\bat{\mu\at{P}\partial_\xi P}\leq0
    \end{align}
    is satisfied in the sense of distributions for any smooth entropy
    pair $(\eta,\mu)$ as in \eqref{Eqn:EntropyPair}.
  \end{enumerate}
\end{theorem}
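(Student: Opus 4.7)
The plan is to establish every claim by passing to the macroscopic limit in an appropriate microscopic identity, using the compactness of Proposition \ref{pro:compactness} together with the entropy inequalities of Proposition \ref{Prop:Entropy}. The structural identities \eqref{Thm:Limit.Eqn2}, the sign conditions \eqref{Thm:Limit.Eqn3}, and the monotonicity $\tderiv{\tau}\Xi\geq 0$ follow directly from the ingredients already assembled: the pointwise identity $u_j=p_j+\sgn u_j$ outside the spinodal interval combined with $\abs{\Ga_\eps}\to 0$ from \eqref{pro:compactness.Eqn1} and the strong convergences \eqref{pro:compactness.Eqn2} gives $U=P+M$ almost everywhere, whereupon the definition of $M_\eps$ forces $M=\sgn\at{\Xi-\xi}$ in the limit; the kink-type majorant of Assumption \ref{ass:macro}, propagated by Lemma \ref{lem:waiting}, yields $P\leq p_*$ to the right of $\Ga$; and the lower bound $-p_*\leq P$ together with $\tderiv{\tau}\Xi\geq 0$ is a direct consequence of the state-space constraints in Definition \ref{def:single-iface-solution} and the one-after-another-property of Proposition \ref{pro:existence}.

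For the bulk equation and the Stefan condition in \eqref{Thm:Limit.Eqn1}, I would test the scaled lattice equation $\dot u_j=\laplace p_j$ against an arbitrary $\Psi\in\fspaceC_c^\infty\bat{\Om}$; after summation by parts and parabolic scaling, this reads
\[
-\int_0^{\tau_\fin}\!\!\int_\Rset U_\eps\,\partial_\tau\Psi\,\dint\xi\,\dint\tau=\int_0^{\tau_\fin}\!\!\int_\Rset \nabla_{+\eps}P_\eps\,\nabla_{+\eps}\Psi\,\dint\xi\,\dint\tau+o(1),
\]
and combining the strong $\fspaceL^s_\loc$-convergence of $U_\eps$ with the weak $\fspaceL^2_\loc$-convergence of $\nabla_{+\eps}P_\eps$ from Proposition \ref{pro:compactness} produces $\partial_\tau U=\partial_\xi^2 P$ in the sense of distributions on $\Om$. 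Writing $U=P+M$ and splitting this distributional identity into its regular and singular parts across $\Ga$, while using the H\"older continuity of $P$ (so that $\jump{P}=0$) and $\jump{M}=-2$, yields simultaneously the heat equation on $\Om\setminus\Ga$ and the Rankine--Hugoniot relation $2\,\tderiv{\tau}\Xi=\jump{\partial_\xi P}$.

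The entropy inequality \eqref{Thm:Limit.Eqn4} is obtained from Proposition \ref{Prop:Entropy} by choosing $\psi_j=\Psi\at{\eps j}$ with a nonnegative $\Psi\in\fspaceC_c^\infty\bat{\Rset}$, multiplying by a nonnegative temporal test function, and integrating in time. The strong $\fspaceL^s_\loc$-convergences of $\eta\at{U_\eps}$ and of $\mu\at{P_\eps}\nabla_{+\eps}\Psi$ combine with the weak $\fspaceL^2_\loc$-convergence of $\nabla_{+\eps}P_\eps$ to pass both sides to the limit and thus deliver \eqref{Thm:Limit.Eqn4} distributionally.

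The hard step is the hysteretic flow rule \eqref{Thm:Limit.Eqn5}, which I would prove by contradiction as illustrated in the left panel of Figure \ref{fig:entropy}. Suppose $P\npair{\tilde\tau}{\Xi\at{\tilde\tau}}<p_*$ on a set of $\tilde\tau$ of positive measure on which also $\tderiv{\tau}\Xi>0$. Fix $\gamma>0$ with $P\npair{\tilde\tau}{\Xi\at{\tilde\tau}}<p_*-2\gamma$, choose a smooth nondecreasing $\tilde\mu$ that vanishes on $\oointerval{-\infty}{p_*-2\gamma}$ and equals $1$ on $\cointerval{p_*-\gamma}{\infty}$, and let $\tilde\eta$ be an antiderivative of $\tilde\mu\circ\Phi'$. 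Evaluating \eqref{Thm:Limit.Eqn4} distributionally on $\Ga$, together with $\jump{P}=0$, $\jump{U}=-2$ and the Stefan condition, yields the pointwise inequality
\[
\tderiv{\tau}\Xi\,\Big[2\,\tilde\mu\at{P}-\int_{P-1}^{P+1}\tilde\mu\bat{\Phi'\at{s}}\,\dint s\Big]\geq 0\qquad\text{on }\Ga.
\]
By construction $\tilde\mu\at{P}=0$ near $\tilde\tau$, while the integrand $\tilde\mu\circ\Phi'$ is strictly positive on a subset of $\oointerval{P-1}{P+1}$ of positive measure (namely on a sliver of the spinodal interval of length $2\gamma/\kappa$ and on an interval of length $2\gamma$ near $-u_*$ inside $\Theta_-$). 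Hence the bracket is strictly negative, forcing $\tderiv{\tau}\Xi\leq 0$ and contradicting the hypothesis; since $\gamma$ was arbitrary, \eqref{Thm:Limit.Eqn5} follows almost everywhere. The principal technical obstacle is to make the distributional evaluation on $\Ga$ rigorous, which I would do by testing \eqref{Thm:Limit.Eqn4} against nonnegative functions concentrating on a thin neighborhood of $\Ga$ and exploiting the Lipschitz regularity of $\Xi$ and the H\"older regularity of $P$ to identify all boundary terms in the limit.
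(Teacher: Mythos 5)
Your treatment of the structural identities \eqref{Thm:Limit.Eqn2}--\eqref{Thm:Limit.Eqn3}, the monotonicity $\tderiv{\tau}\Xi\geq 0$, the weak bulk/Stefan problem, and the entropy inequality \eqref{Thm:Limit.Eqn4} follows essentially the same route as the paper: test the scaled lattice ODE and the discrete entropy balance of Proposition \ref{Prop:Entropy} against scaled test functions, then pass to the limit using the strong convergence of $U_\eps, P_\eps, M_\eps$ and the weak convergence of $\nabla_{+\eps}P_\eps$. No concerns there.

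For the flow rule \eqref{Thm:Limit.Eqn5}, however, your argument has a genuine gap that you yourself flag. You propose to ``evaluate \eqref{Thm:Limit.Eqn4} distributionally on $\Ga$'' and deduce the pointwise Rankine--Hugoniot inequality
\begin{equation*}
\tderiv{\tau}\Xi\,\Big(2\,\tilde\mu\at{P}-\int_{P-1}^{P+1}\tilde\mu\bat{\Phi'\at{s}}\,\dint s\Big)\geq 0\qquad\text{on }\Ga.
\end{equation*}
The algebra here is correct, and your sign analysis of the bracket (positive contributions from a sliver of the spinodal interval and of $\Theta_-$ near $-u_*$) is also correct. But passing from the distributional inequality to a pointwise inequality on $\Ga$ requires well-defined one-sided traces of $\partial_\xi P$ and of $\mu(P)\,\partial_\xi P$ along the interface, and the compactness result only gives $\partial_\xi P\in\fspaceL^2_{\loc}$, with $\jump{\partial_\xi P}$ understood distributionally via \eqref{Thm:Limit.Eqn1}. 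The mollification strategy you sketch --- testing against functions concentrating on thin neighborhoods of $\Ga$ --- requires precisely this trace regularity as input, so it begs the question rather than supplying the missing estimate.

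The paper takes a route that avoids trace analysis entirely. Using continuity of $P$ and $\Xi$, it fixes a whole rectangle $\ccinterval{\tau_1}{\tau_2}\times\ccinterval{\xi_1}{\xi_2}$ around $(\tilde\tau,\tilde\xi)$ on which $\Xi$ stays in the interior and $P<\tilde p<p_*$, and tests the \emph{integrated} entropy inequality with a nonnegative, time-independent $\Psi\in\fspaceC_c\bat{\oointerval{\xi_1}{\xi_2}}$. It then chooses (by approximation) $\tilde\mu=\chi_{\{p>\tilde p\}}$, so that $\tilde\mu(P)\equiv 0$ on the rectangle and the flux term $\int\!\!\int\tilde\mu(P)\,\partial_\xi\Psi\,\partial_\xi P$ vanishes \emph{before} any pass to the interface. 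What remains are the two time-boundary integrals, which evaluate explicitly to $\tilde c\bat{\int_{\xi_1}^{\Xi(\tau_2)}\Psi - \int_{\xi_1}^{\Xi(\tau_1)}\Psi}\leq 0$ with $\tilde c>0$; combined with the monotonicity of $\Xi$ this forces $\Xi(\tau_1)=\Xi(\tau_2)$. This buys a proof that never touches the singular set and works with only $\fspaceL^2_{\loc}$ control of $\partial_\xi P$, whereas your shock-admissibility calculation is the ``formal'' version of the same inequality and would need a separate regularity argument to be made rigorous.
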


\begin{proof} {\ul{\emph{Stefan problem}}}: By construction we have
  \begin{align*}
    M_\eps(\tau,\xi)
    =
    U_\eps(\tau,\xi) - P_\eps(\tau,\xi)
    =
    \sgn{U}_\eps(\tau,\xi)
    \quad \text{for} \quad (\tau,\xi) \notin \Gamma_\eps
  \end{align*}
  with $\Gamma_\eps$ as in \eqref{pro:compactness.Eqn1}, \BMHC and taking the limit $\eps\to0$ we obtain \eqref{Thm:Limit.Eqn2} by \eqref{pro:compactness.Eqn2} and the pointwise convergence of both $\Xi_\eps^*$ and $\Xi_\eps^\#$ to $\Xi$. \EMHC Moreover, the
  lattice ODE \eqref{eq:master-eq} combined with the scaling
  \eqref{Eqn:Scaling} gives rise to
  \begin{equation*}
    \int\limits_0^{\tau_\fin} \int\limits_{\bbR} U_\eps \partial_\tau \Psi
    \,\dint{\xi} \,\dint{\tau}
    =
    -\int\limits_0^{\tau_\fin} \int\limits_{\bbR} P_\eps \Delta_\eps \Psi
    \,\dint{\xi} \,\dint{\tau},
  \end{equation*}
  for any test function
  $\Psi \in \fspaceC_c^\infty\bat{(0,\tau_\fin){\times}\bbR}$, \BMHD where $\Delta_\eps=\nabla_{-\eps}\nabla_{+\eps}$ is the finite difference approximation of $\partial_\xi^2$ on $\eps\Zset$. \EMHD  
  Using \eqref{pro:compactness.Eqn1} and \eqref{pro:compactness.Eqn2}
  we pass \BMHC again to the limit $\eps\to0$ and find \EMHC
  \begin{equation*}
    \int\limits_0^{\tau_\fin} \int\limits_{\bbR}
    \bat{P+M} \partial_\tau \Psi \,\dint{\xi} \,\dint{\tau}
    =
    -\int\limits_0^{\tau_\fin} \int\limits_{\bbR}
    P \partial^2_\xi \Psi \,\dint{\xi} \,\dint{\tau}
    \BMHC =\EMHC
    \int\limits_0^{\tau_\fin} \int\limits_{\bbR}
    \partial_\xi P \partial_\xi \Psi
    \,\dint{\xi} \,\dint{\tau}.
  \end{equation*}
  This is the weak formulation of \eqref{Thm:Limit.Eqn1} since the
  properties of $\Phi$ from \eqref{eq:phi_prime} along with
  \eqref{pro:compactness.Eqn2} and the continuity of $P$ ensure that
  \begin{align*}
    \jump{U\at\tau}
    =
    U\big(\tau, \Xi\at\tau+0\big) - U\big(\tau, \Xi\at\tau-0 \big)
    =
    \BMHC - \EMHC 2 u_*
  \end{align*}
  holds for almost all $\tau$ and $\xi$. Moreover,
  \eqref{Thm:Limit.Eqn3} and the monotonicity of $\Xi$ also follow
  from their discrete \BMHC counterparts, see \EMHC
  Definition \ref{def:single-iface-solution} and Propositions
  \ref{pro:existence} and \ref{pro:compactness}.

  \par
  {\ul{\emph{Entropy inequalities}}}: Let
  $0\leq\tau_1<\tau_2\leq \tau_\fin$ be given and
  $\Psi\in\fspaceC_c^\infty\bat{[0,\tau_\fin]\times\bbR}$ be a
  nonnegative test function. Proposition \ref{Prop:Entropy} gives rise
  to the entropy inequality
  \begin{align*}
    \left.\int\limits_{\Rset}
    \eta\bat{U_\eps}\Psi_\eps\,\dint\xi\right|^{\tau=\tau_2}_{\tau=\tau_1}
    \leq
    \int\limits_{\tau_1}^{\tau_2}\int\limits_{\Rset}
    \Bat{ \eta\bat{U_\eps}\partial_\tau\Psi_\eps
    - \mu\bat{P_\eps}\bat{\nabla_{+\eps}\Psi_\eps}\bat{\BMHC \nabla_{+\eps}\EMHC
    P_\eps}} \,\dint\xi \,\dint\tau
  \end{align*}
  where $\Psi_\eps$ denotes the $\eps$-approximation of $\Psi$, which is
  piecewise constant in space and defined by
  \begin{align*}
    \Psi_\eps\pair{\tau}{\eps j+\zeta}
    =
    \Psi\pair{\tau}{\eps j}
    \qquad \text{for all}\quad
    j\in\Zset,\; \tau\in[0,\tau_\fin],\; \zeta \in [-\eps/2,\eps/2).
  \end{align*}
  Thanks to the smoothness of $\Psi$, the compactness of
  $\mathrm{supp}\,\Psi$, the weak convergence of $\BMHC \nabla_{+\eps}\EMHC P_\eps$,
  and the strong convergence of $P_\eps$ -- see
  \eqref{pro:compactness.Eqn2} and \eqref{pro:compactness.Eqn3} -- we
  can pass to the limit $\eps\to0$ and obtain
  \begin{align}
    \label{Thm:Limit.PEqn4}
    \left.\int\limits_{\Rset}
    \eta\bat{U}\Psi\,\dint\xi\right|_{\tau=\tau_1}^{\tau=\tau_2}
    \leq
    \int\limits_{\tau_1}^{\tau_2}\int\limits_{\Rset}
    \Bat{ \eta\bat{U}\partial_\tau\Psi
    - \mu\bat{P}\partial_\xi\Psi\partial_\xi P }
    \,\dint\xi\,\dint\tau,
  \end{align}
  which in turn yields \eqref{Thm:Limit.Eqn4} in the sense of
  distributions if we choose $\tau_1=0$, $\tau_2=\tau_\fin$ and a test
  function $\Psi$ that vanishes for $\tau=0$ and $\tau=\tau_\fin$.

  \par
  {\ul{\emph{Justification of the flow rule}}}: Let
  $\tilde{\tau} \in [0,\tau_\fin]$ be fixed with
  \begin{align}
    \label{Thm:Limit.PEqn0}
    -p_*\leq P\big(\tilde\tau, \tilde\xi \big) < +p_*,
    \qquad
    \tilde{\xi} := \Xi\at{\tilde{\tau}}.
  \end{align}
  Thanks to the continuity of both $\Xi$ and $P$ we can choose
  positions $\xi_1<\xi_2$ and times $\tau_1<\tau_2$ along with a
  number $\tilde{p}$ such that
  \begin{align*}
    \xi_1\leq \tilde{\xi}\leq \xi_2,
    \qquad\quad
    \tau_1\leq \tilde{\tau}\leq \tau_2,
    \qquad\quad
    \xi_1<\Xi\at{\tau}<\xi_2
    \quad\text{for all}\quad \tau\in [\tau_1,\tau_2]
  \end{align*}
  and
  \begin{align*}
    -p_*\leq P(\tau,\xi) < \tilde{p} < p_*
    \quad \text{for all}\quad
    (\tau,\xi) \in [\tau_1,\tau_2] \times [\xi_1,\xi_2].
  \end{align*}
\BMHD This construction is illustrated in the left panel in Figure \ref{fig:entropy}. \EMHD  Moreover, considering nonnegative test functions
  $\Psi\in\fspaceC_c\bat{(\xi_1,\xi_2)}$ in
  \eqref{Thm:Limit.PEqn4} we obtain
  \begin{align}
    \label{Thm:Limit.PEqn1}
    \int\limits_{\xi_1}^{\xi_2}
    \eta\bat{U(\tau_2,\xi)}\Psi\at\xi \,\dint\xi
    -
    \int\limits_{\xi_1}^{\xi_2}
    \eta\bat{U(\tau_1,\xi)}\Psi\at\xi \,\dint\xi
    \leq
    -\int\limits_{\tau_1}^{\tau_2} \int\limits_{\xi_1}^{\xi_2}
    \mu \bat{P(\tau,\xi)} \partial_\xi\Psi\at\xi
    \partial_\xi P(\tau,\xi)
    \,\dint\xi \,\dint\tau,
  \end{align}
  and by approximation with smooth densities and fluxes we deduce that
  \eqref{Thm:Limit.PEqn1} holds also for the non-smooth entropy pair
  \begin{equation}
    \label{Eqn:SpecialEntropy}
    \tilde{\mu}\at{p}:=
    \begin{cases}
      0 & \text{for } p \leq \tilde{p},
      \\
      +1 & \text{for } p > \tilde{p},
    \end{cases}
    \qquad
    \tilde{\eta}\at{u}
    =
    \int\limits_{-\infty}^u
    \tilde{\mu}\bat{\Phi^\prime\at{\bar{u}}} \,\dint{\bar{u}}.
  \end{equation}
  \BMHD Direct computations reveal that \eqref{Thm:Limit.PEqn1} reduces to \EMHC
  \begin{equation*}
    \tilde{c}\int\limits_{\xi_1}^{\Xi\at{\tau_2}} \Psi\at\xi\,\dint\xi-
    \tilde{c}\int\limits_{\xi_1}^{\Xi\at{\tau_1}} \Psi\at\xi\,\dint\xi\leq 0
  \end{equation*}
  for some constant $\tilde{c}>0$, and \BMHD since $\Psi$ was arbitrary we get \EMHD 
  \begin{align*}
    \Xi\at{\tau_2}\leq\Xi\at{\tau_1}.
  \end{align*}
  \BMHD On the other hand, $\Xi$ is also non-decreasing by construction. We thus \EMHC  arrive at
  \begin{align*}
    \Xi\at{\tau}
    =
    \Xi\at{\tilde{\tau}}
    \qquad \text{for all}\qquad \tau\in [\tau_1,\tau_2]
  \end{align*}
  and conclude that \eqref{Thm:Limit.PEqn0} implies
  $\tfrac{\dint}{\dint\tau}\Xi\at\tau=0$ for almost all
  $\tau\in [\tau_1,\tau_2]$. \BMHD In particular, the interface satisfies 
  \eqref{Thm:Limit.Eqn5}.\EMHC
\end{proof}

The final ingredient to the proof of the main result from
\S\ref{sect:intro} is to extend the convergence along sequences to
convergence of the whole family $\eps\to0$.  This follows from the
fact that for given macroscopic initial data there exists precisely
one solution to the limit model from \S\ref{sect:intro}. \BMHD Since the
arguments are the same for the bilinear and the trilinear case, we
refer to \cite[Theorem 3.18]{HeHe13} for the proof and to
\cite{Hilpert89,Visintin06} for the key estimates. A similar uniqueness result can be found in \cite{MaTeTe09}. \EMHC

\section*{Conflict of Interest}  The authors declare that they have no conflict of interest.


\end{document}
